\documentclass{siamltex}
\usepackage{amssymb,amsfonts,amsmath,subfigure}
\usepackage{color}
\usepackage[left=1in,top=1in,right=1in,bottom=1in]{geometry}
\usepackage{graphicx}
\usepackage{float}

\usepackage{palatino} 
\setlength\footskip{1cm}
\numberwithin{equation}{section} \setlength{\oddsidemargin}{.0001in}

\newtheorem{thm}{Theorem}[section]
\newtheorem{defn}[thm]{Definition}
\newtheorem{cor}[thm]{Corollary}

\newtheorem{rmrk}[thm]{Remark}

\newcommand{\e}{\varepsilon}
\newcommand{\R}{\mathbb{R}}

\newtheorem{prop}[thm]{Proposition}

\newcommand{\abs}[1]{\left\vert{#1}\right\vert}

\newcommand{\ba}{\begin{array}}
\newcommand{\ea}{\end{array}}

\newcommand{\bthm}{\begin{thm}}
\newcommand{\ethm}{\end{thm}}
\newcommand{\bstp}{\begin{stp}}
\newcommand{\estp}{\end{stp}}
\newcommand{\blemma}{\begin{lemma}}
\newcommand{\elemma}{\end{lemma}}
\newcommand{\bprop}{\begin{prop}}
\newcommand{\eprop}{\end{prop}}
\newcommand{\bpf}{\begin{pf}}
\newcommand{\epf}{\end{pf}}
\newcommand{\bdefn}{\begin{defn}}

\newcommand{\edefn}{\end{defn}}
\newcommand{\brk}{\begin{rmrk}}
\newcommand{\erk}{\end{rmrk}}
\newcommand{\bcrl}{\begin{crl}}
\newcommand{\ecrl}{\end{crl}}
\newcommand{\norm}[1]{\left\|#1\right\|}

\newcommand{\beqn}{\begin{equation}}
\newcommand{\eeqn}{\end{equation}}

\renewcommand{\leq}{\leqslant}
\renewcommand{\geq}{\geqslant}

\newcommand{\beq}{\begin{equation}}
\newcommand{\eeq}{\end{equation}}
\newcommand{\bea}{\begin{eqnarray}}

\newcommand{\eea}{\end{eqnarray}}

\newcommand{\dive}{\mathrm{div}\,}
\newcommand{\hdiv}{H_{\dive}( \Omega;\R^2)}
\newcommand{\hdivS}{H_{\dive}( \Omega;\mathbb{S}^1)}
\newcommand{\sh}{\mathcal{H}}
\newcommand{\divcon}{\stackrel{\wedge}{\rightharpoonup} }
\newcommand{\sA}{\mathcal{A}}
\newcommand{\oned}{E^{1D}_\e}
\newcommand{\onedlim}{E_0^{1D}}
\newcommand{\bD}{\mathbb{D}}
\newcommand{\bZ}{\mathbb{Z}}
\newcommand{\nbdsig}{J_u^{\delta}}
\newcommand{\dist}{\mathrm{dist}}
\newcommand{\rot}{\mathrm{curl}}
\newcommand{\trace}{\mathrm{trace}}
\newcommand{\spt}{\mathrm{spt}}
\newcommand{\biglim}{\mathrm{Lim}}

\title{A Ginzburg-Landau type problem for highly anisotropic nematic liquid crystals}

\author{Dmitry Golovaty \thanks{Department of Mathematics, 
University of Akron, Akron, OH 44325, USA} \and Peter Sternberg\thanks{Department of Mathematics, Indiana University, Bloomington, IN 47405}  \and Raghavendra Venkatraman
\footnotemark[2]}

\begin{document}

\maketitle

\begin{abstract}
We carry out an asymptotic analysis of a variational problem relevant in the studies of nematic liquid crystalline films when one elastic constant dominates over the others, namely 
\begin{align*}
\inf E_\e(u)\quad\mbox{where}\quad E_\e(u) := \frac{1}{2}\int_\Omega \left\{\e\,|\nabla u|^2 + \frac{1}{\e} \,(|u|^2 - 1)^2 + L \,(\dive u)^2\right\} \,dx.
\end{align*}
Here $u: \Omega \to \R^2$ is a vector field, $0 < \e \ll 1 $ is a small parameter, and $L > 0$ is a fixed constant, independent of $\e$. 
We identify a candidate for the $\Gamma$-limit $E_0$, which is a sum of a bulk term penalizing divergence and an Aviles-Giga type wall energy involving the cube of the jump in
the tangential component of the $\mathbb{S}^1$-valued nematic director. We establish the lower bound and provide the recovery sequence for this candidate within a restricted class. Then we consider a set of variational problems for $E_0$ arising for a choice of domain geometries and boundary conditions. We demonstrate that the criticality conditions for $E_0$ can be expressed as a pair of scalar conservation laws that share characteristics. We use the method of characteristics to analytically construct critical points of $E_0$ that we observe numerically.
\end{abstract}

\begin{keywords}
nematic liquid crystals, Oseen-Frank energy, Aviles-Giga energy, conservation laws, Gamma-convergence
\end{keywords}

\begin{AMS}
35Q56, 35L65, 49K20
\end{AMS}

\section{Introduction}

Describing the elastic energy in nematic liquid crystal models involves making a choice of the elastic constants appearing as coefficients in front of the various terms penalizing spatial variations. Whether in director theories such as Oseen-Frank, where the unknown is a unit vector $n\in \R^2$ or $\R^3$, or within the Landau-de Gennes $Q$-tensor model where $Q$ is a symmetric, traceless $3\times 3$ matrix \cite{MN,V}, some studies pursue an isotropic, or equal constants, choice where the elastic energy density is given simply by $\abs{\nabla n}^2$ or $\abs{\nabla Q}^2$. Others opt for more generality and consider, for instance, three distinct coefficients multiplying the square of the  divergence and the squares of the components of the curl along and perpendicular to the director, respectively.  However, in response to numerous studies by materials scientists who suggest that interesting morphologies in liquid crystals are related to disparities in the values of the elastic constants  (\cite{DA,ZN}),  here we consider a model variational problem with extreme disparity in elastic constants and explore the implications of this choice of elastic coefficients on the structure of minimizers.

We will focus our study on a problem in two dimensions with a thin film nematic film in mind and so for a bounded, Lipschitz domain $\Omega\subset\R^2$ we consider the following variational problem: 
\begin{align}
\label{energy}
\inf E_\e(u)\quad\mbox{where}\quad E_\e(u) := \frac{1}{2}\int_\Omega \left\{\e\,|\nabla u|^2 + \frac{1}{\e} \,(|u|^2 - 1)^2 + L \,(\dive u)^2\right\} \,dx.
\end{align}
Here $u: \Omega \to \R^2$ is a vector field, $0 < \e \ll 1 $ is a small parameter, and $L > 0$ is a fixed constant, independent of $\e$. 
In general, we will augment \eqref{energy} with Dirichlet boundary conditions $u=g$ on $\partial\Omega$ for given $g:\partial\Omega\to\mathbb{S}^1.$ We point out that in light of the two-dimensional identity
\[
(\dive u)^2+\abs{{\rm curl}\,u}^2=\abs{\nabla u}^2+\;\mbox{null Lagrangian}
\]
it suffices in this study to just penalize the divergence and not to include the curl as well. As $u$ is not a unit vector, \eqref{energy} is not a director model per se but rather bears more resemblance to the Ericksen model with variable degree of orientation \cite{E}. Still it maintains some essential features of both Oseen-Frank and Landau-de Gennes models that we wish to focus on in this investigation. 

In order to orient the reader as to how this energy compares with other more familiar models, we point out that when the positive parameter $L$
is dropped, one is left with precisely the simplified Ginzburg-Landau model
\begin{align}
\label{BBHF}
\frac{1}{2}\int_\Omega \left\{\e\,|\nabla u|^2 + \frac{1}{\e} \,(|u|^2 - 1)^2\right\} \,dx.\tag{BBH}
\end{align}
thoroughly examined in \cite{BBH} under the scaling $\frac{1}{\e}E_\e$. For $\e\ll 1$, minimizers $u_\e$ of that problem are characterized by so-called Ginzburg-Landau vortices with $u_\e\approx f_\e(r)\big(\cos\theta,\pm\sin\theta\big)$ near a zero that carries degree $\pm 1$. On the other hand, formally passing to the limit $L\to\infty$ in \eqref{energy}, one is led to a divergence-free constraint, in which case, at least for simply connected domains $\Omega$, one can introduce a stream function $\psi$ via $\nabla^{\perp}\psi=u$. Then $E_\e$ transforms into
\beq
\frac{1}{2}\int_\Omega \left\{\e| D^2\psi |^2 + \frac{1}{\e} (|\nabla \psi|^2 - 1)^2\right\}\,dx,\tag{AG}\label{AGintro}
\eeq
which is precisely the well-studied Aviles-Giga model, see e.g. \cite{AG,ADM,CD,DO,Ignat,JinKohn,Lorent,NewLorent} and the references therein. Singular structures for that model emerging in the $\e\to 0$ limit take the form of domain walls--generically curves-- across which the normal component of $\nabla \psi$ jumps. Though we do not pursue it in this article, an interesting direction would be to make a rigorous study of the limit $L\to\infty$ in relating our problem to \eqref{AGintro}. We should also mention that there are a multitude of models bearing some resemblance to $E_\e$ coming from the micromagnetics community, including, for instance, the ones studied in \cite{Ignat,ARS,JOP,RS} where the $L^2$-norm of the divergence is replaced by an $H^{-1}$-norm which is then considered with a different scaling.

From this perspective then, our problem rests between the two models \eqref{BBHF} and \eqref{AGintro} and indeed we will find a rich array of singular structures playing a role including Ginzburg-Landau type vortices, which in the scaling of \eqref{energy} are relatively expensive, domain walls which end up contributing $O(1)$ to the energy $E_\e$ and divergence-free vortices of the form $f_\e(r)\,\widehat{e}_\theta$ where $\widehat{e}_\theta:=\big(-\sin\theta,\cos\theta\big)$, whose asymptotic contribution to the energy is zero.

A natural goal is to identify a candidate for the $\Gamma$-limit of the sequence $\{E_\e\}$ as $\e\to 0$
and with this in mind,
a first issue is to determine the appropriate space of competitors for such a limit and to explore what kind of compactness properties hold for sequences of $H^1(\Omega;\R^2)$ functions, say $\{w_\e\}$, satisfying a uniform energy bound $E_\e(w_\e)<C.$ One is naturally led to consider the Hilbert space $\hdiv$ consisting of $L^2$ vector fields having $L^2$-divergence and it is immediate that $\{w_\e\}$ will be weakly compact in this space, with an $\mathbb{S}^1$-valued limit. Such mappings can, in general, have tangential components that jump across curves, though their normal components cannot jump. In Theorem \ref{comp} we note that through a minor modification of the compactness result of \cite{DKMO} one may also show
strong convergence, up to subsequences, in $L^p(\Omega;\R^2)$ for any $p<\infty$; see also \cite{ADM} for an independent proof of compactness in the Aviles-Giga setting.

 From the standpoint of constructing energy efficient sequences, and ultimately recovery sequences for $\Gamma$-convergence, the resolution of a jump in the tangential components of an $\mathbb{S}^1$-valued map, say $w$, across a wall leads one to consider a Modica-Mortola type of heteroclinic connection linking the tangential values $\pm \sqrt{1-(w\cdot \nu)^2}$ across an interface having normal $\nu$. With these heuristics in mind, and denoting the one-sided traces along such a jump set $J_u$ by $u_+$ and $u_-$,
 one is led to a candidate for the $\Gamma$-limit of the form  \begin{equation}
 E_0(u) := \frac{L}{2} \int_{\Omega} (\dive u)^2 \,dx + \frac{1}{6}\int_{J_u\cap\Omega} |u_--u_+|^3 \,d \sh^1 + \frac{1}{6} \int_{\partial \Omega} |u_{\partial\Omega}-g|^3 \,d\sh^1. \label{Ezerointro}
 \end{equation}
 We note that the cubic dependence on the jump across $J_u$ is identical to that found in the asymptotics for \eqref{AGintro}. However,  we also point out the presence of the boundary integral in \eqref{Ezerointro} measuring possible jumps in the tangential component along $\partial\Omega$, a feature of our model not typically found in the Aviles-Giga problem. 
 
 The form of $E_0$ suggests that the space of definition for the $\Gamma$-limit must be a subset of those vector fields in $\hdivS$ having a rectifiable jump set with the cube of the jump in the one-sided traces being integrable. The difficulty lies in the fact that energy-bounded sequences may not have limits lying in the space of functions of bounded variation, an effect first elucidated for (AG) in \cite{ADM}, so identification of a natural space is nontrivial. In \cite{ADM,DO} the authors identify what would appear to be the right space for establishing $\Gamma$-convergence for the Aviles-Giga functional, introducing the notion of  `entropy measures,' but to date the construction of a recovery sequence remains an open problem for (AG). We do not pursue here the interesting question of whether some analog of the results in \cite{DO,LO} on the structure of elements of this new space holds for the energy $E_\e$ in \eqref{energy}. 

Instead we will present arguments for the $\Gamma$-limit lower bound and for the recovery sequence under the {\it assumption} of the limit lying in $\hdivS\cap BV(\Omega;S^1).$ This is the content of Theorem \ref{main}. We note that similar difficulties arise when partial $\Gamma$-convergence results are obtained in micromagnetic models such as \cite{ARS}. Our techniques for proving lower-semicontinuity adapt the Jin-Kohn entropy \cite{JinKohn} and borrow some ideas from \cite{ARS}. For the recovery sequence we adopt the rather ingenious and nontrivial construction of Conti and De Lellis for \eqref{AGintro}, cf. \cite{CD}, with care taken to verify that the divergence term in $E_\e$---not present in \eqref{AGintro}---does not contribute to the energy in a neighborhood of the jump set.

After presenting the arguments for $\Gamma$-convergence within this special class, we turn to the analysis of the behavior of minimizers of the presumed $\Gamma$-limit $E_0$ in various geometries and under various boundary conditions $g$. That is, we want to focus on the question of what kinds of morphologies one should expect to see for very disparate elastic constants and in the process we will develop new tools for carrying out such an investigation.
  
We begin this pursuit by establishing various notions of criticality for $E_0$. In Theorem \ref{critthm} we show that in the bulk, that is, away from the jump set $J_u$, criticality of a vector field $u$ implies that the gradient of divergence lies in the direction of $u$. When $u$ is locally lifted to $u=e^{i\theta}$, this leads to a pair of conservation laws for the phase $\theta$ and the divergence of $u$,
both sharing the same characteristics, cf. Corollary \ref{conservation}. This makes for an interesting comparison with the presumed $\Gamma$-limit of \eqref{AGintro}, where, for example, the authors of \cite{DKMO} exploit the presence of a single conservation law for $\theta$ writing $\nabla^{\perp}u=e^{i\theta}$ where $u$ solves the eikonal equation. We also derive in \eqref{el:4} and \eqref{el:45} a natural boundary condition holding along $J_u$ relating the normal component of $u$ to the jump in the divergence across the wall, and in \eqref{el:13} a criticality condition yielding stationarity of the wall itself that not surprisingly involves its curvature. We use these conditions in the rest of the paper to build critical points for specific examples.

In Section 5 we specialize our study of minimizers of $E_0$  to the case where $\Omega$ is either a disc or an annulus. Depending on choice of the $\mathbb{S}^1$-valued boundary condition $g$, we find that minimizers may or may not develop walls and tend to follow $\widehat{e}_\theta$ as much as possible. In particular, for `hedgehog' boundary conditions $g(\theta)=\widehat{e}_r:=(\cos\theta,\sin\theta)$ in the disc, we can establish an explicit formula for the minimizer as a vector field that behaves like $\widehat{e}_\theta$ near the origin and then unwinds to $\widehat{e}_r$ to accommodate the boundary conditions, cf. Theorem \ref{radialhedge}. This result is reminiscent of a similar observation made in \cite{Helein} for three-dimensional Oseen-Frank model in a ball with hedgehog boundary conditions when divergence is penalized heavily. Perhaps the most interesting case to us is for the disc under the choice $g(\theta)=(\cos\theta,-\sin\theta)$. Here our numerics reveal a rather dramatic dependence of the wall geometry and location on the value of the parameter $L$ and through the three criticality conditions and system of conservation laws derived in Section 4 we are able to build a critical point that appears to capture this complicated morphology, at least in one parameter regime. We conclude this section with an example posed in an annulus where our analysis suggests that in some parameter regime, a minimizer prefers to have a wall that coincides with the boundary.

Finally, in Section 6 we pose the problem of minimizing $E_0$ in a rectangle subject to constant Dirichlet data on the top and bottom of the form $(\pm \sqrt{1 - a^2},a)$ for $a\in [0,1)$ and periodic boundary conditions on the sides. What motivates our choice of periodic boundary conditions is the wish to understand under what conditions the transition from the top to the bottom involves a one-dimensional wall construction as opposed to a more complicated two-dimensional cross-tie type scenario as appears in various micromagnetic studies such as \cite{ARS, DKMO2}. This question was raised and partially addressed for the case of anisotropic
elastic energy, though not `extreme anisotropic' elastic energy in the sense of our present work, in the articles \cite{BF,Go}. 

Our focus at the beginning of this section is to revisit the question of compactness and $\Gamma$-convergence within the one-dimensional context where competitors only vary with $y$. In Theorem \ref{onedcompactness} we show that energy bounded sequences do necessarily have subsequential limits whose third power lie in $BV(-H,H)$, where $2H$ is the height of the rectangle. We then give a complete proof of $\Gamma$-convergence in one-dimension, cf. Theorem \ref{thm1Dmain}. Though, of course, the two-dimensional result given in Theorem \ref{main} applies in particular to the one-dimensional setting, our reason for presenting the one-dimensional proof is that it is considerably simpler while still maintaining many of the essential complications and features of the much more involved two-dimensional lower-semicontinuity argument and recovery sequence construction. 

After then giving a complete characterization of one-dimensional minimizers in Theorem \ref{1dmain} we conclude with a two-dimensional construction of a critical point with cross-ties, again utilizing the criticality conditions and conservation laws. The energy of this critical point is then compared to the minimal one-dimensional energy to reveal in Theorem \ref{not1d} that there exists a finite interval $(L_0,L_1)$ of $L-$values---bounded away from zero---for which the one dimensional minimizers from Theorem \ref{1dmain} do \textit{not} minimize the full two-dimensional $E_0$ energy. Here we use a combination of analysis and simple numerical integration to demonstrate that the $E_0$ energy of our critical point with cross-ties is below the energy of one-dimensional minimizers when $L\in(L_0,L_1)$. Additional numerical simulations of the gradient flow for the energy $E_\varepsilon$ show that the (local) minimizers of $E_\varepsilon$ have similar morphology and energy to our cross-tie construction within the interval of $L$-values where the energy of this construction is lower than that of the one-dimensional minimizers. In fact, these simulations also suggest that a different cross-tie type structure develops as $L$ is increased further and this structure has energy that is still lower than that of the one-dimensional critical point. 

We begin our article with a section introducing notation and recalling key notions regarding the function spaces $\hdiv$ and $BV(\Omega;\R^2).$

\section{Preliminaries} \label{prelim}
Throughout the article, $\Omega \subset \R^2$ will denote a bounded Lipschitz domain. We let $\nu_{\partial \Omega}$ denote the outward pointing unit normal along $\partial \Omega.$ 
\noindent
Two spaces of vector fields that will play a prominent role in our analysis are $BV(\Omega;\R^2)$, the space of vector fields of bounded variation taking values in $\R^2,$ and $\hdiv$, the Hilbert space of $L^2(\Omega;\R^2)$ vector fields having weak $L^2$ divergence.  We will often be interested in vector fields that lie in the intersection of these spaces, and are in addition $\mathbb{S}^1$- valued. 

We recall that a map $u \in BV(\Omega, \R^2)$ is approximately continuous in $\Omega \backslash J_u$ where $J_u$ is the jump set of $u$ and is countably $1-$rectifiable. By rectifiability, we note that $J_u$ is contained in an at most countable union of $C^1$ curves up to an $\sh^1$ null set, where $\sh^1$ denotes one-dimensional Hausdorff measure. We fix a regular orientation of these $C^1$ curves that contain almost all of $J_u,$ and let $(\tau_u,\nu_u)$ denote the approximate unit tangent and unit normals to $J_u$ that respect this orientation. Denoting the half planes 
\begin{align*}
H^{\pm}_{\nu_u(x)} := \{y \in \R^2 : (y - x) \cdot \nu_u(x) \geqslant 0 \mbox{ resp. } \leqslant 0 \},
\end{align*} 
$u$ admits traces along $J_u.$ That is, there exist two measurable functions $u_\pm$ on $J_u$ such that for $\sh^1-$a.e. $x \in J_u,$ we have 
\begin{align*}
\lim_{r \downarrow 0} \frac{1}{r^2} \int_{Q_r(x,\nu_u(x)) \cap H^\pm_{\nu_u(x)}} |u(y) - u_\pm (x)|\,dy = 0,
\end{align*}
with $Q_r(x, \nu_u(x))$ denoting the square of side  length $r,$ centered at $x,$ that has one side parallel to $\nu_u(x)$. 

Now, if $u \in BV \cap \hdiv$, then along the jump set $J_u,$ an application of the divergence theorem shows that one must have $u_+ (x)\cdot \nu_u(x) = u_-(x) \cdot \nu_u(x)$ for $\sh^1-$a.e. $x \in J_u.$  It follows that the jump in $u$ along $J_u$ is equal to the jump in the tangential component of $u$ across $J_u.$ 
 
Concerning the space $\hdiv,$ we recall that elements of $\hdiv$ have a well-defined normal trace on $\partial\Omega$, viewed as a distribution in the Sobolev space $H^{-1/2}(\partial\Omega)$, cf. \cite[Ch. 1]{Te}. This distribution is defined by the integration by parts formula 
\begin{align} \label{eq:IBP}
\langle (u \cdot \nu_{\partial \Omega}), \phi \rangle := \int_\Omega \nabla \Phi \cdot u \,dx + \int_\Omega (\dive u) \Phi \,dx, 
\end{align} 
where $\phi \in H^{1/2}(\partial \Omega),$ and $\Phi$ is an $H^1(\Omega)$ extension of $\phi.$  

We will frequently be concerned with vector fields $u \in BV(\Omega;\mathbb{S}^1) \cap H_{\mathrm{div}}(\Omega;\mathbb{S}^1) $ satisfying $|u(x)| = 1.$ For such vector fields, we in fact have that the distribution $u \cdot \nu$ is induced by an $L^\infty(\partial \Omega) = \big(L^1(\partial \Omega)\big)^\ast$ function. To see this, let $\phi \in \eqref{eq:IBP}$ be an $L^1(\partial \Omega)$ function and let $\Phi \in W^{1,1}(\Omega)$ denote an extension of $\phi$ to $\Omega.$ We again define $\langle (u \cdot \nu_{\partial \Omega}), \phi \rangle$ by the formula \eqref{eq:IBP}. While linearity of $(u \cdot \nu)$ is immediate, its continuity follows by applications of the H\"{o}lder and Sobolev embedding inequalities. It can be checked by an approximation argument that this definition is independent of the extension $\Phi$ of $\phi.$   

For a given $g\in H^{1/2}(\Omega;\R^2)$, we will also denote by $H^1_g(\Omega;\R^2)$ the Sobolev space of $H^1$ vector-valued functions having trace $g$ on $\partial\Omega.$
 
We close this section by briefly recalling differentiability properties of $BV$ vector fields, drawing from \cite[Section 3.9]{AFP}. For any vector field $U \in BV(\Omega;\R^2)$ we let $S_U$ denote the points of its approximate discontinuity set; see \cite[Definition 3.63]{AFP}. Recalling that $S_U$ is the set of points where the approximate limit of $U$ fails to exist, one can decompose the gradient via $DU = D^a U + D^s U,$ where $D^a U$ is absolutely continuous with respect to Lebesgue measure and $D^s U$ is the mutually singular part. The singular part $D^s U$ is further decomposed as $D^s U = D^j U + D^c U$ where $D^j U := D^sU \lfloor J_U$ denotes the jump part of $DU$ and $D^c U = D^s U \lfloor (\Omega \backslash S_U)$ is its Cantor part. 

We also recall the $BV$ chain rule \cite[Theorem 3.96, Pg. 189]{AFP} which we state in a form specialized to our setting. For a $BV$ vector field $U \in BV(\Omega; \R^2)$ and a vector field $F \in C^1(\R^2;\R^2)$, we define the composition $V := F \circ U.$ Then $V$ is a vector field of bounded variation and $DV$ satisfies the decomposition of measures
\begin{equation} 
DV= \tilde{D} V + D^j V,\label{eq:der}
\end{equation}
with
\begin{align}
\tilde{D} V &:= \nabla F(U) \nabla U \mathcal{L}^2 + \nabla F(\tilde{U}) D^c U, \\ \label{eq:jumpder}
D^j V &:= (F(U_+) - F(U_-)) \otimes \nu_U \sh^1 \lfloor J_U,
\end{align}
Here $J_U$ is the jump set of $U,$ the unit vector $\nu_U$ is the approximate unit normal field along $J_U$, $\mathcal{L}^2$ is two-dimensional Lebesgue measure, $D^c U$ is the Cantor part of the derivative $U,$ and $\tilde{U}(x)$ is the approximate limit of $U$ at $x$, for any $x \in \Omega \backslash J_U$ (cf. \cite[Definition 3.63, Pg. 160]{AFP}). Applying traces on both sides of equations \eqref{eq:der}-\eqref{eq:jumpder}, yields 
\begin{align*}
\dive V &= \trace(\tilde{D}V) + \trace(D^j V), \\
\trace(\tilde{D}V)&= \trace\big( \nabla F(U)\nabla U\big)\mathcal{L}^2 + \trace(\nabla F(\tilde{U})) D^cU, \\
\trace(D^jV) &= (F(U_+) - F(U_-))\cdot \nu_U \,d\sh^1 \lfloor J_U. 
\end{align*}
 \section{Compactness and partial $\Gamma$-convergence on a general domain}
 
We begin our rigorous analysis with the following compactness theorem for energy bounded sequences. 
\bthm[Compactness] \label{comp} Assume $\{v_\e\}\subset H^1(\Omega;\R^2)$ satisfies the uniform energy bound 
\[
\sup_{\e>0}E_\e(v_\e)<\infty.
\]
Then there exists a subsequence (still denoted here by $v_\e$) and a function $v\in\hdivS$ such that
\begin{eqnarray}
&&v_\e\rightharpoonup v\quad\mbox{in}\;\hdiv, \label{weakdiv}\\
&& v_\e\to v\quad\mbox{in}\;L^2(\Omega;\R^2) \label{strongL2}
\end{eqnarray}
\ethm
We will write $v_\e\divcon v$ when both \eqref{weakdiv} and \eqref{strongL2} hold.
Property $\eqref{weakdiv}$ is immediate in light of the uniform bound on the $L^2$-norm of the divergence, while \eqref{strongL2} follows from the proof of \cite{DKMO}, Prop. 1.2.
The hypotheses of this proposition from \cite{DKMO} differ from our setting in that their sequence is assumed to be divergence-free whereas ours has the weaker assumption of a uniform $L^2$ bound on the divergence. However, a minor modification of their proof
allows for accommodation of this weaker assumption. 

Before proceeding, we wish to stress that a uniform energy bound does 
{\it not} allow one to conclude that the limit lies in $BV(\Omega;\mathbb{S}^1)$; see the discussion on  \cite[pg. 338-340]{ADM} or Remark \ref{rmknotbv} below. 
Our partial $\Gamma$-convergence result in this section, however, is phrased with this extra assumption.
To this end, we fix boundary data $g\in H^{1/2}(\partial\Omega;\mathbb{S}^1)$ for admissible functions in $E_\e$.
We point out that for a sequence $\{u_\e\} \subset H^1(\Omega;\R^2)$ satisfying $u_\e \cdot \nu_{\partial \Omega} = g \cdot \nu_{\partial \Omega}$, under the topology $u_\e\divcon u$ with $u$ assumed to lie in  $ BV(\Omega, \mathbb{S}^1) \cap H_{\rm{div}} (\Omega, \mathbb{S}^1)$,
it follows that  
\begin{equation}
u_{\partial\Omega}(x)\cdot\nu_{\partial \Omega}(x)=g(x)\cdot \nu_{\partial \Omega}\quad\mbox{for}\; \sh^1-a.e. \;x\;\mbox{on}\; \partial\Omega.\label{normaltrace}
\end{equation}
Here we denote by $u_{\partial \Omega}$ its trace on $\partial \Omega$. Indeed, for any $\phi\in H^1(\Omega)$ the divergence theorem yields
\begin{eqnarray*}
&&
\int_{\partial\Omega}u_{\partial\Omega}\cdot \nu_{\partial\Omega}\,\phi\,d\sh^1(x)=\int_{\Omega}\nabla \phi\cdot u\,dx+
\int_{\Omega}{\rm div}\,u\,\phi\,dx\\
&&=\lim_{\e\to 0}\int_{\Omega}\left\{\nabla \phi\cdot u_\e\,dx+{\rm div}\,u_\e\,\phi\right\}\,dx=
\int_{\partial\Omega}g\cdot \nu_{\partial\Omega}\,\phi\,d\sh^1(x)
\end{eqnarray*}

 Now for any  $u\in BV(\Omega, \mathbb{S}^1) \cap H_{\rm{div}} (\Omega, \mathbb{S}^1)$ such that $u_{\partial\Omega}\cdot\nu_{\partial \Omega}=g\cdot \nu_{\partial \Omega}$ on $\partial\Omega$ we define our candidate $E_0$ for the $\Gamma$-limit of $E_\e$ via
 \begin{equation}
 E_0(u) := \frac{L}{2} \int_{\Omega} (\dive u)^2 \,dx + \frac{1}{6}\int_{J_u} |u_--u_+|^3 \,d \sh^1 + \frac{1}{6} \int_{\partial \Omega} |u_{\partial\Omega}-g|^3 \,d\sh^1. \label{Ezero}
 \end{equation}
 We remark that if one introduces the measurable function $X : J_u \to [0,\pi/2]$ by 
\begin{align*}
X := \frac{1}{2}\min\left|\widehat{u_\pm, u_\mp} \right|
\end{align*}
so that $X$ denotes the minimal half-angle between the unit vectors $u_+$ and $u_-$, then the quantity $|u_--u_+|$ arising  in the $\Gamma$-limit can be equivalently expressed as $2\sin X$. Similarly one can express $|u_{\partial\Omega}-g|$ as $2\sin X_{\partial\Omega}$
where $X_{\partial \Omega} := \frac{1}{2}\min \left| \widehat{u_{\partial \Omega}, g }\right|$. Of course, for all $x\in\partial\Omega$ such that
$u_{\partial\Omega}=g$, the last integral in \eqref{Ezero} vanishes, whereas the condition that $\abs{u_{\partial\Omega}}=1$ along with \eqref{normaltrace} imply that whenever $u_{\partial\Omega}(x)\not=g(x)$, one necessarily has
\[
\abs{u_{\partial\Omega}(x)-g(x)}=2\sqrt{1-(g(x)\cdot\nu_{\partial\Omega}(x))^2}.
\] 
Similarly, another alternative to the expression $ |u_-(x)-u_+(x)|$ is 
\[
|u_-(x)-u_+(x)| = 2\sqrt{1-(u_+(x)\cdot\nu_u)^2} = 2\sqrt{1-(u_-(x)\cdot\nu_u)^2},
\]
where $\nu_u$ denotes the measure-theoretic normal to the jump set $J_u$.

The main result of this section is a $\Gamma$-convergence type of result relating $E_\e$ to $E_0$ under the assumption of $BV(\Omega;\mathbb{S}^1)$ competitors for $E_0.$ 
\bthm\label{main} Let $u \in \hdivS \cap BV(\Omega; \mathbb{S}^1)$ with $u_{\partial \Omega} \cdot \nu_{\partial \Omega} = g\cdot \nu_{\partial \Omega}.$\\
\noindent
(i) If $u_\e \in H^1_g(\Omega, \R^2)$ is a sequence of functions such that $u_\e \divcon u$, then 
\begin{equation}
\liminf_{\e \to 0} E_\e(u_\e) \geq E_0(u).\label{lb}
\end{equation}
\noindent
(ii)  There exists $w_\e \in H^1_g(\Omega;\R^2)$ with $w_\e \divcon u$ satisfying 
\begin{align}
\limsup_{\e \to 0}E_\e(w_\e) = E_0(u). \label{rs}
\end{align}
\ethm
\begin{proof} {\bf (i)} We begin with the proof of lower-semicontinuity \eqref{lb}; see also \cite{AG}. We borrow some technical ingredients of the proof from similar arguments in \cite{ARS}. We will use the following notation 
\[
e_\e(v) := \frac{1}{2} \left( \e |\nabla v|^2 + \frac{1}{\e} (|v|^2 - 1)^2 + L (\dive v)^2 \right)\]
and
\[
E_\e(v, A) := \int_A e_\e(v)\,dx, 
\]
where $A$ is any measurable subset of $\Omega$ and $v \in H^1(\Omega;\R^2).$

\medskip\noindent {\bf{(a)}} We suppose $u \in BV(\Omega) \cap \hdiv $ with $|u(x)| = 1$ for a.e. $x \in \Omega,$ and for now assume $u_{\partial \Omega} = g$ along $\partial \Omega.$ The more general case where $(u_{\partial \Omega} - g)\cdot \nu_{\partial \Omega} = 0$ will be treated in step (c). We also suppose that $u_\e \divcon u$ and assume that $\liminf_{\e \to 0} E_\e(u_\e) \leqslant \Lambda < \infty$ for some $\Lambda > 0$ since otherwise \eqref{lb} is trivial. We must show that 
\begin{align*}
\lim_{\e \to 0} E_\e(u_\e) \geqslant \frac{1}{2} L \int_{\Omega } (\dive u)^2 \,dx + \frac{1}{6}\int_{J_u} |u_+- u_-|^3 \,d \sh^1. 
\end{align*}

{
We recall that the jump set $J_u$ is rectifiable. For simplicity, in Steps (a)-(b) we assume that $J_u$ is in fact a $C^1$ embedded curve $\Gamma$ of finite $\sh^1$ measure. The more general case where $J_u$ is merely rectifiable, and is consequently contained in a countable union of $C^1$ curves up to a null set is treated in Step (c) below. 
}

We let $\delta > 0$ be an arbitrary number and denote $\nbdsig := \{x \in \Omega: \dist(x, J_u ) \leqslant \delta\} $ for a $\delta-$neighborhood of $J_u.$  Putting $ \Omega_\delta := \Omega \backslash \nbdsig,$ we then have 
\begin{align*}
E_\e(u_\e) &\geqslant \frac{L}{2} \int_{\Omega_\delta} (\dive u_\e)^2 + \frac{1}{2} \int_{\nbdsig}  \left\{ \e |\nabla u_\e|^2 + \frac{1}{\e} (|u_\e|^2 - 1)^2 + L (\dive u_\e)^2  \right\}\,dx \\
&= A_\e^\delta (u_\e) + B_\e^\delta(u_\e).
\end{align*}
We note first that for any fixed $\delta > 0,$ by convexity and the resulting lower-semicontinuity, it follows that
\begin{align*}
L \liminf_{\e \to 0} \int_\Omega(\dive u_\e)^2 \,dx \geqslant \liminf_{\e \to 0} L\int_{\Omega_\delta} (\dive u_\e)^2 \geqslant L\int_{\Omega_\delta} (\dive u)^2 .
\end{align*}
The left hand side of the last inequality is independent of $\delta$ and therefore we may let $\delta \to 0$ to conclude that 
\begin{align} \label{Aeps}
\liminf_{\delta \to 0}\liminf_{\e \to 0} A_\e^\delta (u_\e) \geqslant \frac{L}{2}\int_\Omega (\dive u)^2 \,dx. 
\end{align}

\medskip \noindent{\bf (b)} We will next show that
\begin{align} \label{eq:Beps}
\liminf_{\e \to 0} B^\delta_\e(u_\e) \geqslant \frac{1}{6} \int_{J_u} |u_- - u_+|^3 \,d \sh^1 - o_\delta(1),
\end{align}
from which the desired result will follow.  We first assume in this step that $J_u$ is given by a smooth curve $\Gamma$ with closure contained in $\Omega,$ and we continue to assume that $u_{\partial \Omega} = g$ - $\sh^1-$a.e. on $\partial \Omega.$ Along $\Gamma,$ we let $(\tau_u, \nu_u)$ denote the pair of unit tangent and unit normal vectors, oriented directly with the basis $(e_1, e_2)$ to $\R^2.$  

For each point $x \in \Gamma$ which is a Lebesgue point of $u^\pm,$ we obtain $r_x > 0$ such that 
\begin{align} \label{eq:lebpts}
\frac{1}{|\Gamma \cap B(x,r_x)|}\int_{\Gamma \cap B(x,r_x)} |u^\pm(y) - u^\pm (x)|\,d\sh^1 (y) \leqslant \frac{\delta}{|\Gamma|}.
\end{align}
Without loss of generality, we can assume that $r_x < \delta.$ We extract a Besicovitch subcover of $\{B(x, r_x)\}_{x \in \overline{\Gamma}},$ say balls $B_1 := B(x_1, r_1), \cdots, B_N :=  B(x_N, r_N),$ with $x_j \in \Gamma,$ and no more than $C_0$ balls overlapping for some universal constant $C_0.$ We then let $\{\phi_j\}_{j=1}^N$ denote a partition of unity subordinate to this covering. Letting $\pi$ denote the nearest point projection from $\nbdsig$ onto $\Gamma,$ we recall that $u_\e \divcon u,$ in $\Omega.$ Before proceeding with the proof of \eqref{eq:Beps}, we require some preliminary calculations: 
writing $\nu(x_j) = (\cos \theta_j, \sin \theta_j)$ with $\theta_j \in [0, 2\pi),$ we define 
\begin{align*}
\partial_X := \cos \theta_j \partial_x + \sin \theta_j \partial_y,  \hspace{1cm} \partial_Y := -\sin \theta_j \partial_x + \cos \theta_j \partial_y,
\end{align*}
and for $\alpha \in \R$ we set $u_\e^\alpha :=  \begin{pmatrix}
\cos \alpha & -\sin \alpha \\
\sin \alpha & \cos \alpha
\end{pmatrix}u_\e.$ Defining finally the rotated divergence for a vector function $v$ by  $\mathrm{div}_jv := \partial_X v^1 + \partial_Y v^2,$ a simple calculation then shows that 
\begin{align*}
\mathrm{div}_j u_\e^\alpha = \cos(\alpha + \theta_j) \cdot \dive u_\e  - \sin(\alpha + \theta_j) \cdot \rot \, u_\e. 
\end{align*}
In particular, choosing $\alpha = - \theta_j,$ we find 
\begin{align} \label{eq:rotateddive}
\mathrm{div}_j u_\e^{-\theta_j} = \dive u_\e. 
\end{align}
Next, we define the vector field $\Xi$ given by 
\begin{align*}
\Xi (v_1, v_2) = 2 \left(\frac{1}{3} v_2^3 + v_2 v_1^2 - v_2, \frac{1}{3} v_1^3 + v_1 v_2^2 - v_1\right) =:\big(\Xi_1(v_1,v_2), \Xi_2(v_1,v_2)\big). 
\end{align*} 
This vector field was introduced by Jin and Kohn \cite{JinKohn} in their study of the Aviles-Giga problem.
We calculate that  
\begin{align*}
\mathrm{div}_{X,Y} \Xi(v_1,v_2) &=  \big(\partial_X \Xi_1 + \partial_Y \Xi_2 \big)\\ &= 2(|v|^2 - 1)(\partial_X v_2 + \partial_Y v_1) + 4 v_1 v_2 (\partial_X v_1 + \partial_Y v_2 ) .
\end{align*}
Setting $v = u_\e^{-\theta_j}$ in the preceding identity and using \eqref{eq:rotateddive} gives
\begin{align} \label{eq:divvect}
\dive_j \Xi(u_\e^{-\theta_j}) = 2 (|u_\e|^2 -1) (\partial_X v_2 + \partial_Y v_1) + 4 v_1 v_2 \dive u_\e. 
\end{align}
Consequently
\begin{align} \label{eq:jk}
|\dive_j \Xi(u_\e^{-\theta_j})| &\leqslant \frac{1}{\e}(|u_\e|^2 - 1)^2 + \e\big( \partial_X v_2 + \partial_Y v_1)^2 + \frac{4}{L}v_1^2 v_2^2 + L (\dive u_\e)^2.
\end{align}
{
We have
\begin{align*}
\e\left( \partial_X v_2 + \partial_Y v_1\right) ^2 &\leqslant \e|\nabla_{X,Y} v|^2 + 2\e \left( \partial_X v_2 \partial_Y v_1 - \partial_X v_1 \partial_Y v_2 \right) + 2\e \partial_X v_1 \partial_Y v_2\\
&\leqslant \e|\nabla_{X,Y} v|^2 -2\e \mathrm{Jac}(v) + \e\left(2 \partial_X v_1 \partial_Y v_2 +  (\partial_X v_1)^2 + (\partial_Y v_2)^2\right)\\
& = \e|\nabla_{X,Y} v|^2 - 2\e \mathrm{Jac}(v) + \e(\dive_j v)^2,
\end{align*}
with $\mathrm{Jac}(v)$ denoting the Jacobian determinant of $Dv.$ 
Using this in \eqref{eq:jk} along with \eqref{eq:rotateddive} we find 
\begin{align*}
|\dive_j \Xi(u_\e^{-\theta_j})| \leqslant e_\e(u_\e) + \frac{4}{L}(u_\e^1u_\e^2)^2 - 2\e \mathrm{Jac}(Dv) + \e(\dive u)^2
\end{align*}
}

For any smooth nonnegative compactly supported function $\psi \in C_c^\infty(\Omega),$ we then find 
\begin{align*}
\int_\Omega \psi |\dive_j \Xi(u_\e^{-\theta_j})| &\leqslant 2\int_\Omega \psi e_\e(u_\e) + \e \int_\Omega \psi (\dive u_\e)^2 \\ &+ \frac{4}{L} \int_\Omega \psi v_1^2 v_2^2 + 2 \e \int_\Omega \psi \dive_{X,Y} (v_2 \partial_Y v_1, - v_2 \partial_X v_1) .
\end{align*}
Integrating the last term by parts and using the H\"{o}lder's inequality, we find that 
\begin{multline*}
\limsup_{\e \to 0} 2\e \left|\int_\Omega \psi \dive_{X,Y} (v_2 \partial_Y v_1, - v_2 \partial_X v_1) \right| \\ = \limsup_{\e \to 0} 2\e \left|\int_\Omega  \nabla_{X,Y} \psi \cdot  (v_2 \partial_Y v_1, - v_2 \partial_X v_1)\right| \leqslant \limsup_{\e \to 0} C (\psi)\e^{1/2} = 0. 
\end{multline*}
By the energy bound, we also know that $\e \int_\Omega \psi (\dive u_\e)^2 \leqslant \|\psi\|_{L^\infty}\Lambda \e.$ 
Continuing with the proof of the lower bound we obtain 
\begin{align} \notag
\Lambda \geqslant \liminf_{\e \to 0} E_\e(u_\e, \nbdsig) &= \sum_{j=1}^N \liminf_{\e \to 0} \int \phi_j(\pi(y)) e_\e(u_\e)\,dy \\ \notag
&\geqslant \frac{1}{2}\sum_{j=1}^N \liminf_{\e \to 0} \int \phi_j (\pi(y)) \left(\left|\dive_j (\Xi(u_\e^{-\theta_j})) \right| - \frac{1}{L }(u_\e)_1^2 (u_\e)_2^2\right) \,dy \\ \label{eq:lwrbndjump1}
&\geqslant \frac{1}{2}\sum_{j=1}^N \int \phi_j(\pi(y)) |\dive_j (\Xi(u^{-\theta_j}))| - \frac{1}{L}r_j^2 
\end{align}
where in the last step we have used the standard properties of weak convergence of the sequence of measures $\{|\dive_j \Xi(u_\e^{-\theta_j})|\,dy\}_{\e > 0}$ for each fixed $j = 1, \cdots, N.$ Next using the $BV$ chain rule stated in section \ref{prelim} and recalling that the measure $D^c U$ is mutually singular with respect to Lebesgue measure, we find that for any non-negative,  bounded Borel function $\psi,$ and any Borel set $A \subset \Omega$, 
\begin{align} \notag
\int_A \psi |\dive V| &= \int_{A \backslash (\spt(D^cU) \cup J_U)} \psi |\trace\big(\nabla F(U) \nabla U \big)|\,dy \\ \notag & + \int_{A \cap \spt(D^c U)} \psi |\trace\nabla F(\tilde{U})|d D^cU \\ \label{eq:measureineq} & + \int_{A \cap J_U} \psi\left| \left(F(U_+) - F(U_-)\right) \cdot \nu_U\right|\,d\sh^1  \\ \notag
&\geqslant \int_{A \backslash (\spt(D^cU) \cup J_U)} \psi |\trace\big(\nabla F(U) \nabla U \big)|\,dy \\ \notag &+ \int_{A \cap J_U} \psi\left| F(U_+) - F(U_-) \cdot \nu_U\right|\,d\sh^1.
\end{align}
 We now apply inequality \eqref{eq:measureineq} to \eqref{eq:lwrbndjump1} with $A = \nbdsig $ for some fixed $\delta > 0,$ $F = \Xi^{\theta_j}, U = u^{-\theta_j}$ for each $j = 1,\cdots, N,$ and $\psi = \phi_j\circ \pi.$ Since by \eqref{eq:rotateddive} we have that $\dive_j \big(\Xi(u^{-\theta_j})\big) = \dive_j \Big(\big(\Xi(u^{-\theta_j})\big)^{\theta_j}\Big)^{-\theta_j} = \dive \Big(\big( \Xi(u^{-\theta_j})\big)^{\theta_j}\Big),$ we obtain
\begin{multline*}
\Lambda \geqslant \liminf_{\e \to 0} E_\e(u_\e, \nbdsig) \geqslant
\frac{1}{2}\sum_{j=1}^N \int \phi_j(\pi(y)) |\dive_j (\Xi(u^{-\theta_j}))| - \frac{1}{L} r_j^2 \\
\geqslant \frac{1}{2}\sum_{j=1}^N \int_{\nbdsig \backslash \spt(D^c u^{-\theta_j}) \cup J_u} \phi_j(\pi(y)) |\trace(\nabla \Xi^{\theta_j}(u^{-\theta_j}) \nabla (u^{-\theta_j})|\,dy \\ + \int_{J_u} \phi_j(\pi(y)) \Big|\left(\Xi\Big(\big(u^{-\theta_j}\big)_+\Big)- \Xi\Big(\big(u^{-\theta_j}\big)_-\Big)\right) \cdot \nu^{-\theta_j} \Big|\,d\sh^1 - \frac{1}{L} r_j^2.
\end{multline*}
The first term above, corresponding to the absolutely continuous part of the divergence measure, is nonnegative and in fact, $o_\delta(1)$ as $\delta \to 0,$ by the monotone convergence theorem since the integrand, along with the summation is an $L^1$ function in $\nbdsig \backslash (\mbox{spt }  D^c u \cup J_u).$ Also,  we can estimate the sum $\sum_{j=1}^N r_j^2 \leqslant \left( \max_{1 \leqslant j \leqslant N} r_j\right) (r_1 + \cdots + r_N) \leqslant C_0 |\Gamma| \left( \max_{1 \leqslant j \leqslant N} r_j\right) \leqslant C \delta \to 0$ as $\delta \to 0.$ As for evaluating the jump term, we observe that at the point $x_j,$ since $\nu^{-\theta_j} = (1,0),$ we have 
\begin{align*}
\Xi\Big( \big(u^{-\theta_j}\big)^+ \Big) - \Xi\Big( \big(u^{-\theta_j}\big)^- \Big) \cdot (1, 0) = \frac{4}{3}(u \cdot \nu^\perp)^3 = \frac{4}{3} \sin^3 X(x_j) = \frac{1}{6}|u_+(x_j) - u_-(x_j)|^3.
\end{align*}
Therefore, by choice of $x_j,$ \eqref{eq:lebpts}, and also the $C^1$ nature of the curve $\Gamma = J_u,$ 
\begin{align*}
\liminf_{\e \to 0} E_\e(u_\e,\nbdsig) \geqslant \frac{1}{6} \sum_{j=1}^N \int_{J_u} |u_+ - u_-|^3\,d\sh^1  - C\delta - o_\delta(1). 
\end{align*}
Letting $\delta \to 0,$ we conclude the proof of the lower bound in the case when $J_u$ is a single smooth curve and $u_{\partial \Omega} = g$ along the boundary. 

\medskip \noindent {\bf{(c)}} Next we address \eqref{eq:Beps} in the full generality of $J_u$ being merely rectifiable and $(u_{\partial \Omega} - g)\cdot \nu_{\partial \Omega} = 0$. To unify our arguments to include the contribution of the boundary integral, we define $J := J_u \cup \partial \Omega,$ which is a new rectifiable set. We use the convention that along $\partial \Omega,$ the inner trace is given by $u_{\partial \Omega}$ and the ``outer trace" by $g.$ By rectifiability, we know that $J = \bigcup_{k=1}^\infty \Gamma_k \cup \Gamma^0$ with $\Gamma_k$ being $C^1$ embedded curves and $\sh^1(\Gamma^0) = 0.$ We fix an arbitary $\delta > 0$ and select an integer $N = N_\delta$ such that 
{

\begin{align}
\frac{1}{6}\int_{J} |u_--u_+|^3 \,d \sh^1 \leq \sum_{k=1}^{N_\delta} \frac{1}{6}\int_{\Gamma_k} |u_--u_+|^3 \,d \sh^1+\delta\label{finite} . 
\end{align}
}
By low dimensionality of the overlaps of the curves $\Gamma_1, \cdots, \Gamma_N,$ outside of an at most countable collection of balls $D_j$ of radius $\frac{\delta}{2^j},$ it represents no loss of generality in assuming that the curves $\big(\overline{\Gamma_k}\big)_{k=1}^N$ are disjoint, $C^1$ embedded curves. 
{
 We denote by $\beta_\delta$ the minimal separation given by
\beqn
\beta_\delta:=\min_{k,k'\in\{1,2,\ldots,N_\delta\},\;k\not=k'}\rm{dist}\;(\overline{\Gamma}_k,\overline{\Gamma}_{k'}).\label{beta}
\eeqn
Then for each $k\in\{1,2,\ldots,N_\delta\}$ we introduce an open neighborhood $J_u^k$ of $\Gamma_k$ via 
\[
J_u^k=\left\{x\in\Omega:\,{\rm dist}\,(x,\Gamma_k)<\min\{\frac{\beta_\delta}{2},\frac{\delta}{\mathcal{H}^1(\Gamma_k)k^2}\}\right\}
\]
From \eqref{beta} we see that these neighborhoods are disjoint and we also note that
$\abs{J_u^k}\leq \frac{\delta}{k^2}$ so that
\beqn
\abs{\bigcup_{k=1}^{N_\delta}J_u^k}\leq C\delta.\label{leb}
\eeqn
where here $\abs{\,\cdot\,}$ denotes Lebesgue measure.
\noindent
Now
\begin{equation}
E_\e(u_\e) \geq \frac{L}{2} \int_{\Omega\backslash \cup_{k=1}^{N_{\delta} }J_u^k} (\dive u_\e)^2 + \int_{\cup_{k=1}^{N_{\delta} }J_u^k} e_\e(u_\e)\,dx\label{brk1}
\end{equation}
and by convexity and the resulting lower-semicontinuity, it follows that
\begin{align*}
 \liminf_{\e \to 0}  \int_{\Omega\backslash \cup_{k=1}^{N_{\delta} }J_u^k} (\dive u_\e)^2 \,dx  \geqslant  \int_{\Omega\backslash \cup_{k=1}^{N_{\delta} }J_u^k} (\dive u)^2 .
\end{align*}
Hence,
condition \eqref{lb} will follow from \eqref{finite} and \eqref{leb} by letting $\delta$ approach $ 0$ once we can establish that
\begin{equation} 
\liminf_{\e\to 0}\int_{ \cup_{k=1}^{N_{\delta} }J_u^k}e_\e(u_\e)\,dx\geq  \sum_{k=1}^{N_\delta} \frac{1}{6}\int_{\Gamma_k} |u_--u_+|^3 \,d \sh^1-O(\delta)
\label{jump1}
\end{equation}
\noindent
Appealing now to our work in Steps \textbf{(a)-(b)} we find that for each $k = 1, \cdots N_\delta$ we have 
\begin{align*}
\liminf_{\e \to 0} \int_{J_u^k} e_\e(u_\e) \,dx \geqslant \frac{1}{6}\int_{\Gamma_k} |u_- - u_+|^3 \,d \sh^1 - C|J_u^k|. 
\end{align*}
In the last inequality, $C$ is independent of $k.$ Summing over $k = 1, \cdots, N_\delta,$ using the disjointedness of the neighborhoods $J_u^k$ along with \eqref{leb} yields the inequality \eqref{jump1}, completing the proof of the lower bound in this general case. 
}

\bigskip
\noindent {\bf (ii)} The proof of \eqref{rs} follows the approach of \cite{CD} rather closely, and therefore we present only an outline of the argument, highlighting the steps that are different for our problem by focusing primarily on the treatment of the divergence term in the energies. To facilitate comparison with \cite{CD}, we adopt the notation of that proof wherever possible.

\medskip\noindent
{\bf (a)} \underline{Preparation:} We let $\phi: \R^2 \to [0,1],$ be a smooth radially-symmetric bump function with $\int \phi = 1$ and $\spt(\phi) \subset B_1.$ For any $\e > 0,$ we denote as usual $\phi_\e(\cdot) := \frac{1}{\e^2} \phi(\frac{\cdot}{\e})$ and set $u_\e := u\ast \phi_\e.$ 

We next introduce a class of step functions. For any $x_0 \in \R^2, $ $\nu \in \mathbb{S}^1 $ and $\theta \in \R,$ we introduce the function 
\begin{align*}
s_{x_0,\nu,\theta} (x) := (\cos \theta)\nu - \sin \theta H\big( (x- x_0)\cdot\nu \big) \nu^\perp,
\end{align*} 
where $H(t) = 1$ for $t > 0$ and $H(t) = -1$ for $t < 0.$ We then let $\mathcal{S}_{x_0}$ denote the collection of all such step functions at $x_0.$

For $u$ as in the statement of the Theorem, we let $x_0 \in J_u$ be a point at which its approximate unit normal $\nu_u$ is defined and we consider $s_{x_0} \in \mathcal{S}_{x_0}$ such that $s_{x_0}^\pm = u_\pm(x_0)$ and $\nu = \nu_u.$ We point out that this choice of $s$ depends on the given function $u.$ To alleviate notation therefore, we just denote one subscript rather than all three. 

Fixing now $\e > 0, \eta > 0, k \geq 1$ and $\overline{\theta} > 0,$ we define ``good points'' on the jump set $J^g(\overline{\theta},k,\eta,\e)$ to be those $x_0 \in J_u$ such that

\medskip\noindent$\bullet$ The step function $s_{x_0}$ associated to $x_0$ satisfies $|\sin \theta| \geqslant \sin \overline{\theta},$ and 
\begin{align} \label{g1}
\| \nabla u\|(B_{2k\e}(x_0)) \geqslant k \e \sin \overline{\theta},
\end{align}
and 
\begin{align} \label{g1.5}
\frac{1}{|B_{2k\e}|}\int_{B_{2k\e}(x_0)} |u - s_{x_0}| \,dx \leqslant \eta.
\end{align}
$\bullet$ For the finitely many balls $B_\e(y) \subset B_{2k\e}(x_0)$ with $y \cdot \nu = x_0 \cdot \nu$ and $(y-x_0)\cdot \nu^\perp \in 2 \e \mathbb{Z},$ one has 
\begin{align} \label{g2}
\int_{B_\e(y) \cap J_u} |[u]|^3 \,d \sh^1 \geqslant |2 \sin \theta|^3 2 \e - \eta \e. 
\end{align}
We denote $\Omega^g := \{x \in \Omega: \dist(x, J^g) < k\e /2\}$ and set $\Omega^{(\e)} := \{x \in \Omega : \dist (x, \partial \Omega) > \e\}.$ 

For any $A \subset \R^2$ and $w \in H^1(A),$ it is also convenient to introduce the notation 
\begin{align*}
F_\e[w;A] := \int_A \e|\nabla w|^2 + \frac{1}{\e} (|w|^2 - 1)^2 \,dx. 
\end{align*}

\medskip\noindent
{\bf (b)} \underline{Estimates away from $\Omega^g.$ } In this step, we show 
\begin{align} \label{eq:domaindecomp}
\mbox{Lim } E_\e [u_\e; \Omega^{(\e)} \backslash \Omega^g ]:= \varlimsup_{\overline{\theta} \downarrow 0}\, \varlimsup_{k \uparrow \infty}\, \varlimsup_{\eta \downarrow 0}\, \varlimsup_{\e \downarrow 0}\, E_\e [u_\e; \Omega^{(\e)} \backslash \Omega^g ] = \frac{L}{2} \int_\Omega (\dive u)^2 \,dx. 
\end{align}
This statement is the analog of \cite[Proposition 1]{CD}. At its heart, the argument relies on a scale-invariant Poincar\'{e} inequality, which asserts that for any $\delta > 0,$ denoting $v_\delta := v \ast \phi_\delta,$ we have 
\begin{align} \label{eq:poincare}
\left(\int_{B_\delta} |v - v_\delta(0)|^2 \,dx \right)^{1/2} \leqslant c \|Dv\|(B_\delta)
\end{align}
for every $v \in BV(B_\delta),$ where $c > 0$ is independent of $\delta.$ Immediate consequences of the Poincar\'{e} inequality are the following linear and quadratic estimates: for every $k \geqslant 1,$ we have 
\begin{align*} 
\mbox{ (linear:) } \hspace{1cm} F_\e[u_\e; B_{k\e}] &\leqslant C \|Du\|(B_{2k\e}) ,\\
\mbox{ (quadratic:) } \hspace{1cm}  F_\e[u_\e; B_{k\e}] &\leqslant \frac{C}{\e} \left(\|Du\|(B_{2k\e})\right)^2,
\end{align*}
with the constant $C$ being independent of $\e, k.$  The proof of \eqref{eq:domaindecomp} proceeds by partitioning the set $\Omega^{(\e)} \backslash \Omega^g$ according to how $\|Du\|(B_{2k\e}(x))$ scales in $k\e.$ On most of $\Omega,$ where the scaling is sublinear, one uses the quadratic estimate to show vanishing of the $F_\e$ energy, while away from the jump set where the scaling of the total variation measure $\|Du\|$ is linear or superlinear, one uses the linear estimate, along with fine properties of $BV$ functions to argue that once again, the $F_\e$ energy vanishes. We refer the reader to \cite[Proposition 1]{CD} for further details.

\medskip \noindent
{\bf (c)} \underline{Estimates within $\Omega^g.$ } Having shown that the energy of the mollification $u_\e$ outside of the set $\Omega^g$ is asymptotically just the bulk divergence, we simply set our desired recovery sequence $w_\e := u_\e$ on $\Omega \backslash \Omega^g.$ We next define $w_\e $ in $\Omega^g$ in order to capture the wall energies in the limit. To this end, let $\mathcal{F}^j = \{B_{2k\e}(x_i^j)\}_i,$ for $1 \leqslant j \leqslant N$ be $N$ families of disjoint balls with $x_i^j \in J^g$ and the $B_{k\e}(x_i^j)$ cover $\Omega^g.$ Here $N$ is a universal constant obtained from Besicovitch's covering theorem. For fixed $k,$ let $\psi \in C_c^\infty(B_k)$ denote a smooth cut off function such that $\psi \equiv 1$ on $B_{k-1}.$ For every $\e > 0,$ we define $\psi^\e \in C_c^\infty(B_{k\e})$ by the formula $\psi^\e (x) := \psi \big( \frac{x}{\e}\big). $ 

Setting $v^0 := u,$ we inductively define $\{v^j\}_{j=1,\cdots, N}$ as follows. At the $j^{\mathrm{th}}$ step, on the family of balls $\mathcal{F}^j,$ we define 
\begin{align*}
v^j(x) := \left\{
\begin{array}{ll}
\big( 1 - \psi_\e(x - x_i^j)\big) v^{j-1} (x) + \psi_\e(x - x_i^j) s_i^j(x), & \mbox{ if } x \in B_{k\e}(x_i^j) \mbox{ for some } i, \\
v^{j-1}(x) & \mbox{ otherwise}. 
\end{array}
\right.
\end{align*}
Here $s_i^j$ is the simple function associated to $u$ at $x_i^j.$ Set then $v := v^N \ast \phi_\e.$ 

For every $i,j$ we define $R_i^j$ to be the largest rectangle of the form $a < (x - x_i^j)\cdot (\nu^i_j)^\perp < b, |(x-x_i^j)\cdot \nu |< \sqrt{k} \e $ where $a,b \in \R$ to be contained in the ball $B_{(k-2)\e} (x_i^j)$ without intersecting any ball $B_{(k+1)\e}(x_{i^\prime}^{j^\prime})$ with $j^\prime > j.$ Existence of such a rectangle is immediate; for the proof of uniqueness of the rectangle $R_i^j,$ we refer the reader to the geometric argument in \cite[Proposition 2]{CD} 
. The main estimates of the present step correspond to \cite[Proposition 2]{CD}: 
\begin{align} \label{outsiderectangles}
\mbox{Lim } E_\e\left[v, \Omega^{(\e)}\backslash \bigcup_{i,j} R_i^j\right] = \frac{L}{2}\int_\Omega (\dive u)^2 \,dx. 
\end{align}
On each $R_i^j$ one has $v = \phi_\e \ast s_i^j$ and 
\begin{align} \label{jumpupbnd}
\mbox{Lim } \frac{1}{6}\sum_{i,j} \int_{R_i^j \cap J_{s_i^j}} |[s_i^j]|^3 \,d \sh^1 \leqslant \frac{1}{6} \int_{J_u} |u_+ - u_-|^3 \,d \sh^1. 
\end{align}
The proof of the assertions that $v = \phi_\e \ast s_i^j$ on $R_i^j$ and of estimate \eqref{jumpupbnd} follow exactly as in \cite{CD}. The key idea is of course to use the fact that each $x_i^j$ is a good point on the jump set, so that we can invoke \eqref{g2}. For any ball $B^j_{il}$ of the type considered in \eqref{g2}, one has the estimate 
\begin{align*}
2\e |2 \sin\theta|^3 \leqslant \int_{B_{ijl} \cap J_u} |[u]|^3 \,d \sh^1 + \eta \e. 
\end{align*}
Since the balls $B^j_{il}$ are disjoint, and $l \leqslant 2k,$ we find  using \eqref{g1} that 
\begin{align*}
\sum_{i,j} \int_{R_i^j \cap J_{s_i^j}} |[s_i^j]|^3 \,d \sh^1 &\leqslant \sum_{ijl} \int_{B^l_{ij} \cap J_u} |[u]|^3 \,d \sh^1 + \sum_{i,j} 2k \eta \e \\
&\leqslant \int_{\Omega \cap J_u} |[u]|^3 \,d \sh^1 + \sum_{ij} \frac{2\eta}{\sin \overline{\theta}} \|\nabla u\|(B_{2k\e}(x_i^j)) \\
& \leqslant \int_{\Omega \cap J_u} |[u]|^3 \,d \sh^1 + \frac{2N\eta}{\sin \overline{\theta}} \|\nabla u\|(\Omega),
\end{align*}
where once again, $N$ is the Besicovitch constant. The result \eqref{jumpupbnd} follows by applying $\biglim.$ We now turn to the proof of \eqref{outsiderectangles}. In light of our work in step (b) above, it suffices to prove the estimates
\begin{gather} \label{5.1}
\mbox{Lim } F_\e \left[ v ; \bigcup_{i,j} B_{(k+1)\e}(x_i^j) \backslash B_{(k-2)\e} (x_i^j)\right] = 0,  \\  \mbox{Lim } F_\e \left[ v , \bigcup_{i,j} B_{(k-2)\e}(x_i^j) \backslash R_i^j\right] = 0
\end{gather}
and
\begin{equation}
\label{5.3}
\mbox{Lim } \int_{\cup_{i,j} B_{(k+1)\e}(x_i^j) } (\dive v)^2 \,dx = 0. 
\end{equation}
The proof of \eqref{5.1} is identical to the proof of Equations (4.3) and (4.4) in \cite{CD} to which we refer the reader. We prove \eqref{5.3}. A basic estimate in the proof of \eqref{5.1} used in \cite{CD} is the inequality 
\begin{align} \label{eq:eta}
\frac{1}{|B_{2k\e}|}\int_{B_{2k\e}(x_i^j)} |v^J - s_i^j|\,dx \leqslant C \eta,
\end{align}
holding for each fixed $i, j$ and each $J = 0, \cdots, N.$ This inequality is proved by induction on $J.$ By testing against arbitrary $L^2$ functions, it is easy to check that $\dive s_i^j = 0$ for each $i,j$. Attributing each $x$ in the union $\bigcup_{j=1}^N B_{(k+1)\e}(x_i^j)$ to the level $j$ where $v(x)$ was last modified, i.e. to the largest $j$ such that $x \in B_{(k+1)\e}(x_i^j)$ for some $i$ in the $j^{th}$ family, we find inductively that
\begin{multline*} 
\frac{1}{2}\int_{\bigcup_{i,j} B_{(k+1)\e}} (\dive v)^2 \,d x \leqslant \sum_{i,j} \int_{B_{k\e}(x_i^j)} \left|\nabla \psi_\e(x - x_i^j) \cdot (v^{j-1} - s_i^j) \ast \phi_\e\right|^2 \,dx \\ + \int_{B_{(k+1)\e}(x_i^j)} (\dive v^{j-1} \ast \phi_\e)^2 \,dx \\ 
(\mbox{\small{Young's inequality}})\;\leqslant  \sum_{i,j} \int_{B_{k\e}(x_i^j)} \left| \nabla \psi_\e(x-x_i^j) \cdot (v^{j-1} - s_i^j) \right|^2\,dx \\ + \int_{B_{(k+1)\e}(x_i^j)} (\dive v^{j-1})^2 \,dx  \\ 
 (\mbox{\small{proceeding inductively}})\; \leqslant N \sum_{i,j} \int_{B_{k\e}(x_i^j)} \frac{1}{\e^2} |v^{j-1} - s_i^j|^2 \,dx \\ + N\int_{\Omega^g} (\dive u)^2 \,dx \\ 
 (\mbox{since } |v^j|,|s_i^j| \leqslant 1 )\;  \leqslant 8\pi N k^2\sum_{i,j} \frac{1}{|B_{2k\e}|} \int_{B_{2k\e}(x_i^j)} |v^{j-1} - s_i^j|\,dx  \\ +  N\int_{\Omega^g} (\dive u)^2 \,dx \\ 
 (\mbox{ by }\eqref{eq:eta} \; )\; \leqslant 8\pi Nk^2 \eta + N \int_{\Omega^g} (\dive u)^2 \,dx. 
\end{multline*}
Since the foregoing estimates are uniform in $\e,$ we send $\e \downarrow 0, \eta \downarrow 0,k \uparrow \infty$ and $\overline{\theta} \downarrow 0$ in that order, to arrive at \eqref{5.3}, where for the second integral we have applied the monotone convergence theorem. 

\medskip\noindent {\bf (d)} \underline{Estimates within the rectangles:}
Finally, it remains to modify the construction $v$ from the preceding step within the boxes $R_i^j.$ This step relies on the \\
{\bf Claim:} There is a smooth function $w_\e$ such that $w_\e = v$ outside $R_i^j$ and 
\begin{align} \label{eq:rectangle}
F_\e[w_\e;R_i^j] \leqslant \ell_{ij} \frac{1}{6}|2\sin \theta|^3 + C \e \left( \frac{1}{\overline\theta} + k e^{-\overline{\theta}\sqrt{k}}\right). 
\end{align}
Here $\theta$ is the angle of the step function $s_i^j$ and $\ell_{ij} = \sh^1(J_{v^N} \cap R_i^j)$ the length of the rectangle. The proof of this claim follows by using a standard ``Modica-Mortola'' type heteroclinic within the rectangle along with a linear interpolant as in step (c) to match the boundary conditions. Control on the divergence term follows as in step (c), and control of the remaining terms proceeds as in \cite{CD}. Briefly, within each rectangle, we have using \eqref{g2}, 
\begin{multline*}
F_\e [w_\e;R_i^j] \leqslant \frac{1}{6} |2\sin \theta|^3 \sh^1(J_u \cap R_i^j) + \frac{C\e}{\overline{\theta}} + C\e k e^{-\overline{\theta}\sqrt{k}} \\
\leqslant \frac{1}{6} \int_{J_u \cap R_i^j} |[u]|^3 \,d \sh^1 + C\e + k\e \eta + \frac{C\e}{\overline{\theta}} + C\e k e^{-\overline{\theta} \sqrt{k}}.
\end{multline*}
In the above estimate we have used fact that the rectangle $R_i^j$ contains no more than $k$ disjoint balls of the type in \eqref{g2}, and that the sum of their diameters is at least $\sh^1(J_{v^N} \cap R_i^j) - 4\e.$ 
Summing over the rectangles $R_i^j,$ we find using \eqref{g1} that 
\[
F_\e[w_\e;R_i^j] \leqslant \frac{1}{6}\int_{J_u}|[u]|^3 \,d \sh^1 + \sum_{i,j} \left( \frac{C(\overline{\theta})}{k} + C(\overline{\theta}) \eta + C e^{-\overline{\theta}\sqrt{k}} \right) \|\nabla u\|(B_{2k\e}(x_i^j)). 
\] 
Taking $\biglim,$ we complete the requisite estimates, and the proof of the recovery sequence construction follows now by a diagonalization procedure. 

\end{proof}

\section{Criticality conditions and solution via characteristics for the limiting energy $E_0$}

We begin this section by identifying the free boundary problem satisfied by critical points of the limiting functional $E_0$, cf.\eqref{Ezero}. 
We will use the criticality conditions derived below to later construct critical points for specific domains $\Omega$ and with specific boundary data $g$.
\bthm\label{critthm}
Consider any $u\in BV(\Omega,  \mathbb{S}^1) \cap H_{\rm{div}} (\Omega,  \mathbb{S}^1)$ such that $u_{\partial\Omega}\cdot\nu_{\partial \Omega}=g\cdot \nu_{\partial \Omega}$ on $\partial\Omega$. Denote by $J_u$ its jump set. Then if the first variation of $E_0$ evaluated at $u$ vanishes when taken with respect to perturbations compactly supported in $\Omega\setminus J_u$, one has the condition
 \begin{equation}
\label{el:5}
u^\perp\cdot\nabla\dive u =0\mbox{ holding weakly on }\Omega\backslash J_u,
\end{equation}
where $u^\perp=\left(-u_2,u_1\right)$.

Furthermore, if the first variation vanishes at $u$ when taken with respect to perturbations that fix $J_u$ and are supported within any ball centered at a smooth point of $J_u\cap\Omega$ and if the traces $\dive u_+$ and $\dive u_-$ are sufficiently smooth, then one has the condition
\begin{equation}
\label{el:4}
L\left[\dive u\right]+4{\left(1-(u\cdot\nu_u)^2\right)}^{1/2}(u\cdot\nu_u)=0\mbox{ on }J_u\cap\Omega,
\end{equation}
where $[\cdot]=\cdot_+-\cdot_-$ represents the jump of across $J_u$ and $\nu_u$ is the unit normal to $J_u$ pointing from the $+$ side of $J_u$ to the $-$ side. If, on the other hand, the perturbations are supported in $B\cap\Omega$ for a ball $B$ containing an arc of $J_u\cap\partial\Omega$ and if these perturbations again fix $J_u$,  then one has
\begin{equation}
\label{el:45}
L\,\dive u+4{\left(1-(g\cdot\nu_{\partial\Omega})^2\right)}^{1/2}(g\cdot\nu_{\partial\Omega})=0\mbox{ on }J_u\cap\partial\Omega,
\end{equation}
provided the trace $\dive u$ on $J_u\cap\partial\Omega$ is sufficiently smooth.

Finally, a vanishing first variation of $E_0,$ evaluated at $u$ that allows for local perturbations of the jump set $J_u\cap\Omega$ itself, leads to the condition
\begin{multline}
 \label{el:13}
(\dive u_+)^2-(\dive u_-)^2+\left(\dive u_++\dive u_-\right)^\prime\left(u_+\cdot \tau_u-u_-\cdot \tau_u\right)\\=\frac{8\kappa}{3L}{\left(1-(u\cdot\nu_u)^2\right)}^{1/2}{\left(1+2\,(u\cdot\nu_u)^2\right)}\mbox{ on } J_u\cap\Omega,
 \end{multline}
 whenever $J_u$, $u_+$ and $u_-$ are sufficiently smooth. Here $\kappa$ denotes the curvature of $J_u$ and $\left(\dive u_++\dive u_-\right)^\prime$ refers to the tangential derivative along the jump set. 
\ethm
\begin{cor}\label{conservation} Suppose $u$ is smooth and critical for $E_0$ in the sense of \eqref{el:5}. Then writing $u$ locally in terms of a lifting as $u(x,y)=e^{i\theta(x,y)}$ and defining the scalar $v:=\dive u$ one has that \eqref{el:5} is equivalent to the following system for the two scalars $\theta$ and $v$:
\begin{eqnarray}
&&-\sin\theta\, \theta_x+\cos\theta \,\theta_y=v\label{thetav}\\
&&-\sin\theta\, v_x+\cos\theta \,v_y=0.\label{vconstant}
\end{eqnarray}
Consequently, starting from any initial curve in $\Omega$ parametrized via $s\mapsto \big(x_0(s),y_0(s)\big)$ along which $\theta$ and $v$ take values
$\theta_0(s)$ and $v_0(s)$ respectively, the characteristic curves, say
$t\mapsto \big(x(s,t),\,y(s,t)\big)$, are given by
\begin{eqnarray}
&&x(s,t)=\frac{1}{v_0(s)}\left[\cos\big(v_0(s)t+\theta_0(s)\big)-\cos\theta_0(s)\right]+x_0(s),\label{xst}\\
&&y(s,t)=\frac{1}{v_0(s)}\left[\sin\big(v_0(s)t+\theta_0(s)\big)-\sin\theta_0(s)\right]+y_0(s),\label{yst}
\end{eqnarray}
whenever $v_0(s)\not=0.$ The corresponding solutions $\theta(s,t)$ and $v(s,t)$ are given by
\begin{equation}
\theta(s,t)=v_0(s)t+\theta_0(s),\quad v(s,t)=v_0(s),\label{tvst}
\end{equation}
so that the characteristics are circular arcs of curvature $v_0(s)$ and carry constant values of the divergence. In case the divergence vanishes somewhere along the initial curve, i.e. $v_0(s)=0$, then the characteristic is a straight line.
\end{cor}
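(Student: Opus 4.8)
The plan is to treat this as a direct verification followed by a textbook application of the method of characteristics, proceeding in four short stages. First I would establish the claimed equivalence by substitution. Writing the lifting $u=(\cos\theta,\sin\theta)$, a one-line computation gives $\dive u=-\sin\theta\,\theta_x+\cos\theta\,\theta_y$, so that the defining relation $v:=\dive u$ is \emph{exactly} \eqref{thetav}; there is nothing to prove here beyond expanding the divergence. Since $u^\perp=(-\sin\theta,\cos\theta)$, the criticality condition \eqref{el:5}, namely $u^\perp\cdot\nabla\dive u=0$, reads $-\sin\theta\,v_x+\cos\theta\,v_y=0$, which is precisely \eqref{vconstant}. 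The crucial structural observation to record at this stage is that both equations are governed by the \emph{same} first-order transport operator $u^\perp\cdot\nabla=-\sin\theta\,\partial_x+\cos\theta\,\partial_y$; this is what makes the two scalar laws ``share characteristics.''

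Second, I would set up the characteristic system associated with this common operator. Parametrizing the integral curves of $u^\perp$ by $t$, the characteristic ODEs are $\dot x=-\sin\theta$, $\dot y=\cos\theta$, together with the transported quantities $\dot\theta=u^\perp\cdot\nabla\theta=v$ and $\dot v=u^\perp\cdot\nabla v=0$. The key point is that, although the advection direction $u^\perp$ depends on the unknown $\theta$, the system closes in a triangular fashion: from $\dot v=0$ one gets $v\equiv v_0(s)$ along each characteristic, whence $\dot\theta=v_0(s)$ integrates to $\theta(s,t)=v_0(s)t+\theta_0(s)$, which is \eqref{tvst}. With $\theta$ now known explicitly, the two spatial ODEs decouple and become elementary.

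Third, I would integrate the spatial equations. Substituting $\theta=v_0 t+\theta_0$ into $\dot x=-\sin\theta$ and $\dot y=\cos\theta$ and imposing the initial conditions $x(s,0)=x_0(s)$, $y(s,0)=y_0(s)$ yields \eqref{xst}--\eqref{yst}, valid when $v_0(s)\neq0$. Reading off the constant of integration, each characteristic satisfies $(x-x_c)^2+(y-y_c)^2=v_0^{-2}$ with center $x_c=x_0-v_0^{-1}\cos\theta_0$ and $y_c=y_0-v_0^{-1}\sin\theta_0$, exhibiting it as a circular arc of radius $1/|v_0|$, i.e.\ of curvature $v_0$, along which the divergence $v$ is constant. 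The degenerate case $v_0(s)=0$ is handled separately: then $\dot\theta=0$, so $\dot x$ and $\dot y$ are constant and the characteristic is the straight line through $(x_0,y_0)$ in the fixed direction $u^\perp=(-\sin\theta_0,\cos\theta_0)$.

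I do not expect a genuine obstacle, since the computations are routine; the only point requiring care is conceptual rather than technical. Because the system is quasilinear---the characteristic field $u^\perp$ is determined by the very phase $\theta$ being solved for---one must check that the characteristics are well defined before integrating, and this is exactly what the triangular decoupling ($v$ first, then $\theta$, then $(x,y)$) guarantees. I would also take care with the sign convention identifying the radius $1/|v_0|$ with the signed curvature $v_0$, and would note that these formulas construct local solutions only up to the first time neighboring characteristics cross, beyond which the jump set $J_u$ appearing elsewhere in the paper is expected to form.
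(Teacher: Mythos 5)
Your proposal is correct and follows essentially the same route as the paper: the paper likewise obtains \eqref{thetav}--\eqref{vconstant} by direct substitution of the lifting $u=e^{i\theta}$ into $v=\dive u$ and into \eqref{el:5}, and then simply states that one ``readily solves'' the resulting system by characteristics to get \eqref{xst}, \eqref{yst} and \eqref{tvst}. Your write-up merely makes explicit the triangular decoupling ($\dot v=0$, then $\dot\theta=v_0$, then the spatial ODEs) and the circular-arc geometry that the paper leaves implicit, which is a faithful elaboration rather than a different argument.
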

\begin{proof}[Proof of Theorem \ref{critthm}]
We consider $u\in BV(\Omega,  \mathbb{S}^1) \cap H_{\rm{div}} (\Omega,  \mathbb{S}^1)$ such that $u_{\partial\Omega}\cdot\nu_{\partial \Omega}=g\cdot \nu_{\partial \Omega}$ on $\partial\Omega$.
 Then
 \begin{multline}
 \label{el:1}
 E_0(u+\delta u)-E_0(u) = \frac{L}{2} \int_{\Omega} \left[\left(\dive u+\dive \delta u\right)^2-(\dive u)^2\right] \,dx \\ + \frac{1}{6}\int_{J_{u+\delta u}} |\left(u_-+\delta u_-\right)-\left(u_++\delta u_+\right)|^3 \,d \sh^1- \frac{1}{6}\int_{J_u} |u_--u_+|^3 \,d \sh^1 \\ + \frac{1}{6} \int_{\partial \Omega\cap J_{u+\delta u}} |u_{\partial\Omega}+\delta u_{\partial\Omega} -g|^3 \,d\sh^1\\ -\frac{1}{6} \int_{\partial \Omega\cap J_{u}} |u_{\partial\Omega}-g|^3 \,d\sh^1+\frac{1}{2}\int_{\Omega}\lambda\left({\left|u+\delta u\right|}^2-{\left|u\right|}^2\right)\,dx,
 \end{multline}
for any $\delta u$ in $BV(\Omega, \mathbb R^2)\cap H_{\rm{div}} (\Omega, \mathbb R^2)$. The Lagrange multiplier $\lambda$ in \eqref{el:1} enforces the constraint $u\in \mathbb{S}^1$. 

We suppose first that the perturbation $\delta u $ is either supported away from $J_u$ or else is supported in a ball containing only a smooth portion of $J_u\cap\Omega$ and leaves the jump set unaltered, i.e. $J_{u+\delta u}=J_u$. We recall that the normal component of any vector field $w\in H_{\rm{div}}(\Omega,  \mathbb{S}^1)$ is continuous across the jump set of $w$ and $|w_--w_+|=2\sqrt{1-(w\cdot\nu_u)^2}$. We have from \eqref{el:1} that
 \begin{multline}
 \label{el:2}
 \delta E_0(u) = L \int_{\Omega} \dive u\,\dive \delta u \,dx-4\,\int_{J_{u}\cap\Omega} {\left(1-(u\cdot\nu_u)^2\right)}^{1/2}(u\cdot\nu_u)(\delta u\cdot\nu_u) \,d \sh^1 \\ +\int_{\Omega}\lambda(u\cdot\delta u) = \int_{\Omega} \left[-L\nabla\dive u+\lambda\,u\right]\cdot\delta u \,dx \\ -\int_{J_{u}\cap\Omega} \left[L\left(\dive u_+-\dive u_-\right)+4{\left(1-(u\cdot\nu_u)^2\right)}^{1/2}(u\cdot\nu_u)\right](\delta u\cdot\nu_u) \,d \sh^1.
 \end{multline}
From the consideration of perturbations $\delta u$ supported away from $J_u$ we conclude that $u$ satisfies the equation
\begin{equation}
\label{el:3}
-L\nabla\dive u+\lambda\,u=0\mbox{ in }\Omega\backslash J_u,
\end{equation}
which is equivalent to \eqref{el:5}. Then allowing for perturbations that meet $J_u\cap\Omega$ but that leave the jump set unaltered we see that $u$ is subject to the natural boundary conditions \eqref{el:4}. If instead, the perturbation is supported in a ball that contains a portion of $J_u\cap\partial\Omega$ then a calculation analogous to \eqref{el:2} leads to the condition \eqref{el:45}.

Before deriving the last condition \eqref{el:13} of the theorem , we wish to re-interpret the criticality condition \eqref{el:5} as a system of conservation laws. To this end, we suppose an $\mathbb{S}^1$-valued vector field $u$ is critical in the sense of \eqref{el:5} and that we locally write $u$ in terms of a lifting as $u(x,y)=e^{i\theta(x,y)}.$ Assuming $u$ is sufficiently smooth, we introduce the scalar $v:=\dive u$ and find that \eqref{el:5} is equivalent to the following system for the two scalars $\theta$ and $v$:
\begin{eqnarray}
&&-\sin\theta\, \theta_x+\cos\theta \,\theta_y=v\label{thetav1}\\
&&-\sin\theta\, v_x+\cos\theta \,v_y=0.\label{vconstant1}
\end{eqnarray}
Starting from any initial curve in $\Omega$ parametrized via $s\mapsto \big(x_0(s),y_0(s)\big)$ along which $\theta$ and $v$ take values
$\theta_0(s)$ and $v_0(s)$ respectively, one readily solves \eqref{thetav1}-\eqref{vconstant1} to obtain \eqref{xst}, \eqref{yst} and \eqref{tvst}.   We will exploit this property of constant divergence along these circular characteristics in a construction below.

Now we consider a competitor $u$  that is critical in the sense of \eqref{el:5}-\eqref{el:4} and is such that within some ball $B\subset \Omega$ centered on a point of smoothness of $J_u\cap\Omega$ one has the conditions: (i) $\dive u$ is continuous on both sides of $J_u\cap B$ and (ii) the traces of $\dive u$ on $J_u$ are differentiable along $J_u\cap B$ {with integrable derivatives}. We let $J_w$ be a small perturbation of $J_u\cap B$, where a part of a smooth curve in $J_u$ is replaced by another smooth curve (Fig.~\ref{fig1}). 
\begin{figure}[htb]
\centering
    \includegraphics[width=2in]
                    {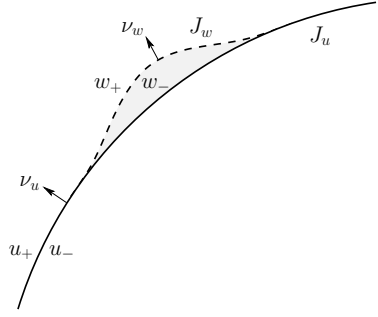}
    \caption{Perturbation of the jump set.}
  \label{fig1}
\end{figure}
\begin{figure}[htb]
\centering
 \includegraphics[width=4in]
                    {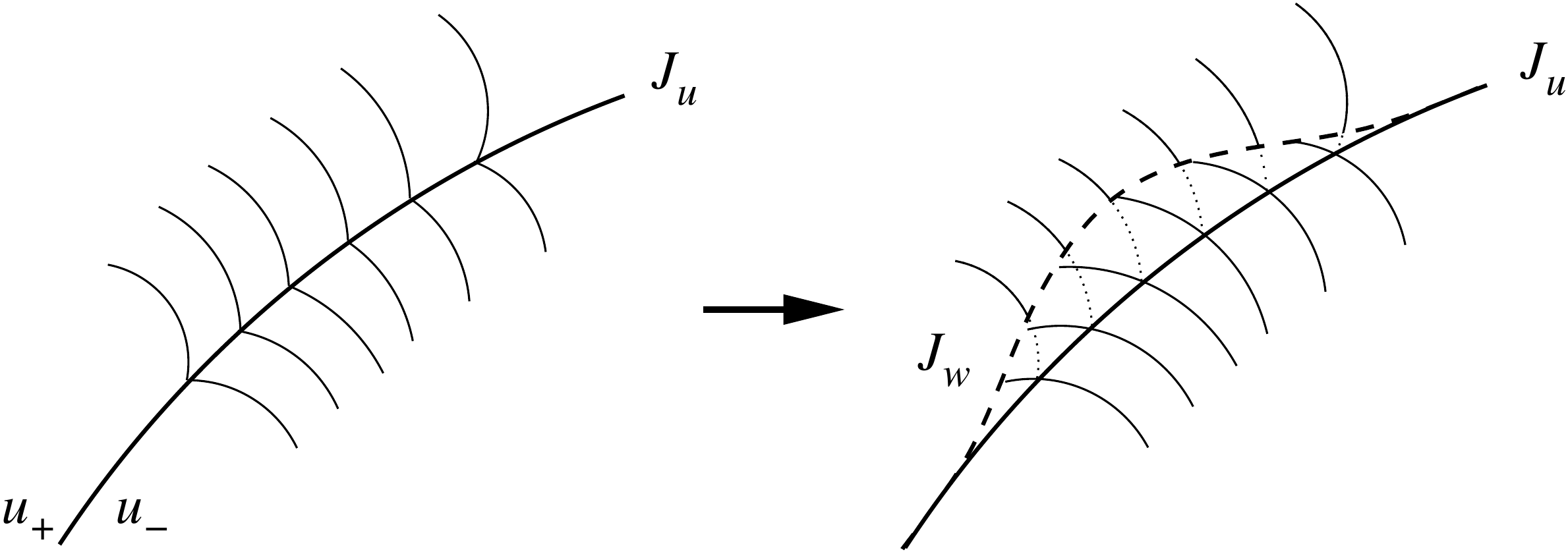}
  \caption{Construction of the perturbed minimizer $w$. The circular arcs of the characteristics defining $u$ meet at the jump set $J_u$ (left). The characteristics on the right side of $J_u$ are used to extend $u_-$ into the interior of the region $J_w\triangle J_u$ (right). In order to satisfy the continuity condition on $J_w$ for the normal component of $w$, a small perturbation is added to $u$ on the left side of $J_w$ (not shown). The smallness of this perturbation is guaranteed by the continuity of $u$ and its divergence on both sides of $J_u$.}
  \label{fig2}
\end{figure}
We assume that the new curve maintains the connectivity of $J_u$, connects smoothly to $J_u$, and lies on one side of the original curve. Here, to fix ideas, we assume that $J_w$ lies on the left side of $J_u$ corresponding to $u_+$. We construct the perturbation $w$ of $u$ as follows. Supposing that on the right side of $J_u$ the function $w$ coincides with $u_-$, we use the characteristics on the right side of $J_u$, using $u_-$ as initial values, to extend $u_-$ into the interior of the region $J_w\triangle J_u$ thus defining $w$ in that region (Fig.~\ref{fig2}). 
{ The characteristics extension of $u_-$ into $J_w\triangle J_u$ allows us to maintain control over $\mathrm{div}\,w_--\mathrm{div}\,u_-$ in that region.

We let $\Omega_w^+$ denote the region to the left of $J_w$ in Fig.~\ref{fig1} and denote by $w_-$ the trace of $w$ on $J_w$ as the boundary is approached from within the region $\mathrm{int}\,(J_w\triangle J_u)$. In order to make sure that the function $w$ is in $H_{\rm{div}}(\Omega,  \mathbb{S}^1)$, it must have the trace 
\begin{equation}
\label{contdiv}
w_+=\left(2\nu_w\otimes\nu_w-I\right)w_-
\end{equation}
on $J_w$ as $J_w$ is approached from within the region $\Omega_w^+$. Indeed, as long as \eqref{contdiv} holds, we have $w_+\cdot\nu_w=w_-\cdot\nu_w$ and $\left(\nu_w\otimes\nu_w-I\right)w_+=-\left(\nu_w\otimes\nu_w-I\right)w_-$. 

We take advantage of continuity of $u$ and $\dive u$ away from $J_u$ which ensures that the difference between $w_+$ as defined in \eqref{contdiv} and $u$ on $J_w$ is small. In particular, if $u=e^{i\theta_u}$ to the left of $J_u$ and $w_+=e^{i\theta_{w_+}}$ on $J_w$, then $\delta\theta_+=\theta_u-\theta_{w_+}$ is small on $J_w$. We introduce a small perturbation $\delta\theta$ compactly supported in $\Omega_w^+$ and such that the trace of $\delta\theta$ on $J_w$ is $\delta\theta_+$. Then we set $w=e^{i(\theta_u+\delta\theta)}$ in $\Omega_w^+$ so that $w\in BV(\Omega,  \mathbb{S}^1)\cap H_{\rm{div}} (\Omega,  \mathbb{S}^1)$. Further, if we let $\delta u:=w-u$ then $\delta u\in BV(\Omega_{w_+}, \mathbb R^2)\cap H_{\rm{div}} (\Omega_{w_+}, \mathbb R^2)$ is a small, complex-valued perturbation compactly supported in the closure of $\Omega_w^+$.} 

Next, we suppose that $J_u$ has the arc-length parametrization $r_u(s)$, where $s\in I$. We introduce the function $h:I\to \R$ with small $C^1-$norm such that $h$ vanishes along with its derivatives at the endpoints of $I$. We now assume that $r_w(s)=r_u(s)+h(s)\nu_u(s)$ for $s\in I$ defines $J_w$. We let $\tau_u(s)=r_u^\prime(s)$ so that $\nu_u(s)=\tau_u^{\perp}$. 

By our assumptions on divergence and using the characteristics construction of $u$ and $w$, it follows that  $\|\delta u\|_{1,\infty}=O(\|h\|_{1,\infty})$. To simplify the notation, we assume that all equivalences in the derivation of the criticality condition appearing below up to \eqref{el:9.1} are true up to terms of order $O\left(\|h\|_{1,\infty}^2\right)$. 

Integrating by parts and using \eqref{el:3}, we have 
\begin{align*}
\frac{L}{2} \int_{\Omega_w^+} \left\{(\dive w)^2-(\dive u)^2\right\} \,dx &=L \int_{\Omega_w^+} \dive u\,\dive \delta u \,dx\\ &=-L \int_{J_w\backslash J_u} (\dive u)\,\delta u\cdot\nu_w \,d \sh^1,
\end{align*}
where $\nu_w$ is the unit normal to $J_w$ pointing into $\Omega_w^+$ (See Fig.~\ref{fig1}). The variation of the energy is then given by
\begin{multline}
 \label{el:6}
 E_0(w)-E_0(u) = \frac{L}{2} \int_{\mathrm{int}\left(J_w\triangle J_u\right)} \left\{(\dive w)^2-(\dive u)^2\right\} \,dx \\ -L \int_{J_w\backslash J_u} (\dive u)\,\delta u\cdot\nu_w \,d \sh^1+\frac{4}{3}\,\int_{J_w\backslash J_{u}} {\left(1-(w\cdot\nu_w)^2\right)}^{3/2}\,d \sh^1\\-\frac{4}{3}\,\int_{J_u\backslash J_{w}} {\left(1-(u\cdot\nu_u)^2\right)}^{3/2}\,d \sh^1.
 \end{multline}
We estimate the third term in \eqref{el:6} as follows. Because
\[
\left|r_w^\prime\right|=\left|(1-h\kappa)\tau_u+h^\prime\nu_u\right|=\sqrt{{(1-h\kappa)}^2+{\left(h^\prime\right)}^2}=1-h\kappa
 \]
 and
 \[
\nu_w=\frac{(1-h\kappa)\,\nu_u-h^\prime \tau_u}{\left|(1-h\kappa)\tau_u+h^\prime\nu_u\right|}=\nu_u-h^\prime \tau_u,
 \]
 we have
 \begin{multline*}
1-(w\cdot\nu_w)^2=1-\left(w(r_u)+h\,\nabla w(r_u)\,\nu_u)\cdot\left(\nu_u-h^\prime \tau_u\right)\right)^2\\=1-\left(w(r_u)\cdot\nu_u+h\,\nabla w(r_u)\,\nu_u\cdot\nu_u-h^\prime \tau_u\cdot w(r_u)\right)^2\\=1-\left(w(r_u)\cdot\nu_u\right)^2-2\left(w(r_u)\cdot\nu_u\right)\left(h\,\nabla w(r_u)\,\nu_u\cdot\nu_u-h^\prime \tau_u\cdot w(r_u)\right),
\end{multline*}
on $J_w\backslash J_u$ so that
 \begin{multline*}
\left.\frac{4}{3}\left(1-(w\cdot\nu_w)^2\right)^{3/2}\right|_{J_w\backslash J_u}\\=\frac{4}{3}\left(1-\left(w(r_u)\cdot\nu_u\right)^2-2\left(w(r_u)\cdot\nu_u\right)\left(h\,\nabla w(r_u)\,\nu_u\cdot\nu_u-h^\prime \tau_u\cdot w(r_u)\right)\right)^{3/2}\\=\frac{4}{3}\left(1-\left(w(r_u)\cdot\nu_u\right)^2\right)^{3/2}\\-4\left(1-\left(w(r_u)\cdot\nu_u\right)^2\right)^{1/2}\left(w(r_u)\cdot\nu_u\right)\left(h\,\nabla w(r_u)\,\nu_u\cdot\nu_u-h^\prime \tau_u\cdot w(r_u)\right)\\=\frac{4}{3}\left(1-\left(u\cdot\nu_u\right)^2\right)^{3/2}\\-4\left(1-\left(u\cdot\nu_u\right)^2\right)^{1/2}\left(u\cdot\nu_u\right)\left(h\,\nabla u_-\,\nu_u\cdot\nu_u-h^\prime u_-\cdot \tau_u\right)
\end{multline*}
for $s\in I$. With the help of \eqref{el:4}, we conclude that
\begin{multline}
 \label{el:7}
\frac{4}{3}\,\int_{J_w\backslash J_{u}} {\left(1-(w\cdot\nu_w)^2\right)}^{3/2}\,d \sh^1=\frac{4}{3}\,\int_{I} {\left(1-(w\cdot\nu_w)^2\right)}^{3/2}{\left(1-h\kappa\right)}\,ds\\=\frac{4}{3}\,\int_{I} {\left(1-(u\cdot\nu_u)^2\right)}^{3/2}{\left(1-h\kappa\right)}\,ds+L\int_I(\dive u_+-\dive u_-)\,\nabla u_-\,\nu_u\cdot\nu_u\,h\,ds\\+L\int_I\left\{(\dive u_+)\, \left(u_+\cdot \tau_u\right)+(\dive u_-) \left(\,u_-\cdot \tau_u\right)\right\}\,h^\prime\,ds,
 \end{multline}
because $w(r_u)\cdot \tau_u=-u_+\cdot \tau_u=u_-\cdot \tau_u$ on $J_u$. Similarly,
\begin{multline}
 \label{el:8}
-L \int_{J_w\backslash J_u} (\dive u)\,\delta u\cdot\nu_w \,d \sh^1\\=-L \int_{I} (\dive u(r_w))\,\left(w_-(r_w)\cdot\nu_w-u(r_w)\cdot\nu_w\right)\,ds\\=-L \int_{I} (\dive u_+)\,\left(w(r_u)+h\,\nabla w(r_u)\,\nu_u\right.\\\left.-u_+-h\,\nabla u_+\,\nu_u\right)\cdot\left(\nu_u-h^\prime \tau_u\right)\,ds\\=L \int_{I} (\dive u_+)\,\left(\nabla u_+\,\nu_u-\nabla u_-\,\nu_u\right)\cdot\nu_u\,h\,ds\\-L\int_I(\dive u_+)\, \left(u_+\cdot \tau_u-u_-\cdot \tau_u\right)\,h^\prime\,ds,
 \end{multline}
since $w(r_u)\cdot\nu_u=u\cdot\nu_u$ on $J_u$. Adding \eqref{el:7} and \eqref{el:8}, we find 
\begin{multline}
 \label{el:9}
-L \int_{J_w\backslash J_u} (\dive u)\,\delta u\cdot\nu_w \,d \sh^1+\frac{4}{3}\,\int_{J_w\backslash J_{u}} {\left(1-(w\cdot\nu_w)^2\right)}^{3/2}\,d \sh^1\\=\frac{4}{3}\,\int_{I} {\left(1-(u\cdot\nu_u)^2\right)}^{3/2}{\left(1-h\kappa\right)}\,ds\\+L\int_I\left\{(\dive u_+)\,\nabla u_+\,\nu_u\cdot\nu_u-(\dive u_-)\,\nabla u_-\,\nu_u\cdot\nu_u\right\}\,h\,ds\\-L\int_I\left\{(\dive u_+)\, \left(u_+\cdot \tau_u\right)-(\dive u_-) \left(\,u_-\cdot \tau_u\right)\right\}\,h^\prime\,ds.
 \end{multline}
 Finally, changing the coordinates $(x,y)=r_u(s)+t\,h\,\nu_u(s)$ and using our continuity assumptions, we have for the first integral in \eqref{el:6} that
 \begin{multline}
 \label{el:9.1}
 \frac{L}{2} \int_{\mathrm{int}\left(J_w\triangle J_u\right)} \left\{(\dive w)^2-(\dive u)^2\right\} \,dx = \frac{L}{2} \int_{I}\int_{0}^{h}\left\{(\dive w)^2-(\dive u)^2\right\} (1-h\,\kappa)\,dt\,ds \\=\frac{L}{2} \int_{I}\int_{0}^{h}\left\{(\dive u_-)^2-(\dive u_+)^2\right\}\,dt\,ds=\frac{L}{2} \int_{I}\left\{(\dive u_-)^2-(\dive u_+)^2\right\}\,h\,ds
 \end{multline}
The equations \eqref{el:6}, along with \eqref{el:9} and \eqref{el:9.1}, give the following variation of the energy functional
\begin{multline}
 \label{el:10}
 \delta E_0(u) = \frac{L}{2} \int_{I} \left\{(\dive u_-)^2-(\dive u_+)^2\right\}\,h \,ds-\frac{4}{3}\,\int_{I} {\left(1-(u\cdot\nu_u)^2\right)}^{3/2}\,h\kappa\,ds\\+L\int_I\left\{(\dive u_+)\,\nabla u_+\,\nu_u\cdot\nu_u-(\dive u_-)\,\nabla u_-\,\nu_u\cdot\nu_u\right\}\,h\,ds\\-L\int_I\left\{(\dive u_+)\, \left(u_+\cdot \tau_u\right)-(\dive u_-) \left(\,u_-\cdot \tau_u\right)\right\}\,h^\prime\,ds.
 \end{multline}
Now, observe that the identities \[\nabla u\,\nu_u\cdot\nu_u=\dive u-\nabla u\,\tau_u\cdot\tau_u\] and \[(u\cdot\tau_u)^\prime=\nabla u\,\tau_u\cdot\tau_u+\kappa\,u\cdot\nu_u\] hold separately for $u_-$ and $u_+$ on $J_u$. Substituting these expressions into \eqref{el:10} and integrating by parts, we have
\begin{multline}
 \label{el:11}
 \delta E_0(u) = \frac{L}{2} \int_{I} \left\{(\dive u_+)^2-(\dive u_-)^2\right\}\,h \,ds-\frac{4}{3}\,\int_{I} {\left(1-(u\cdot\nu_u)^2\right)}^{3/2}\,h\,\kappa\,ds\\-L\int_I\left\{(\dive u_+)\nabla u_+\,\tau_u\cdot\tau_u\,-(\dive u_-)\,\nabla u_-\,\tau_u\cdot\tau_u\right\}\,h\,ds\\-L\int_I\left\{(\dive u_+)\, \left(u_+\cdot \tau_u\right)-(\dive u_-) \left(\,u_-\cdot \tau_u\right)\right\}\,h^\prime\,ds\\=\frac{L}{2} \int_{I} \left[(\dive u_+)^2-(\dive u_-)^2\right]\,h \,ds-\frac{4}{3}\,\int_{I} {\left(1-(u\cdot\nu_u)^2\right)}^{3/2}\,h\,\kappa\,ds\\-L\int_I\left\{(\dive u_+)\left((u_+\cdot\tau_u)^\prime-\kappa\,u\cdot\nu_u\right)\,-(\dive u_-)\,\left((u_-\cdot\tau_u)^\prime-\kappa\,u\cdot\nu_u\right)\right\}\,h\,ds\\+L\int_I\left\{(\dive u_+)^\prime\, \left(u_+\cdot \tau_u\right)-(\dive u_-)^\prime \left(\,u_-\cdot \tau_u\right)\right\}\,h\,ds\\+L\int_I\left\{(\dive u_+)\, \left(u_+\cdot \tau_u\right)^\prime-(\dive u_-) \left(\,u_-\cdot \tau_u\right)^\prime\right\}\,h\,ds,
 \end{multline}
for any smooth, positive $h$ with a compact support in $I$. The same expression can be established for smooth, negative $h$ with a compact support in $I$ by considering perturbations of the jump set that lie on the right side of $J_u$. From this we immediately conclude that $J_u$ is stationary whenever
\begin{multline}
 \label{el:12}
(\dive u_+)^2-(\dive u_-)^2+\left(\dive u_++\dive u_-\right)^\prime\left(u_+\cdot \tau_u-u_-\cdot \tau_u\right)\\=\frac{8\kappa}{3L}{\left(1-(u\cdot\nu_u)^2\right)}^{3/2}-2\kappa \left(\dive u_+-\dive u_-\right)\left(u\cdot \nu_u\right)\mbox{ on }J_u.
 \end{multline}
With the help of \eqref{el:4}, the condition \eqref{el:12} can also be expressed as  in \eqref{el:13}.
\end{proof}

 \section{Results for the special case of a disc or an annulus}

{Now we present some examples where we take $\Omega$ to be a disc or an annulus. For the disc we will discuss three choices of boundary data $g:\partial\bD\to \mathbb{S}^1$. Our focus is on optimizing the $\Gamma$-limit $E_0$ where
we recall the normal component of competitors $u\in H_{\rm div}(\bD;\mathbb{S}^1) \cap BV(\bD;\mathbb{S}^1)$ is required to satisfy $u_{\partial \bD}\cdot \nu_{\partial \bD} =g_{\partial \bD}\cdot \nu$ on $\partial\bD.$ Our discussion on the annulus is a bit more formal, and we present examples that indicate situations where the wall is potentially curved, possibly occurring along the boundary. }

Throughout this section, $\widehat{e}_r := (x,y)/\sqrt{x^2 + y^2}$ denotes the unit radial vector field, while \[\widehat{e}_\theta := (-y,x)/\sqrt{x^2 + y^2}\] denotes the unit angular vector field. 

\subsection{Tangential boundary conditions: $g(x,y) = (-y,x)$\,} In this case, a minimizer is clearly given by the divergence-free vector field
\[
u(x,y)=\widehat{e}_\theta.
\]
since for this choice of $u$ one has $E_0(u)=0.$ 

From the characteristics viewpoint laid out in the preceding section, this critical point is composed of characteristics which are simply radii through the origin of $\bD$ to the boundary, corresponding to $v \equiv 0$ on each of these characteristics. We point out that for the Aviles-Giga energy, the authors in \cite{JOP} classify zero energy states of the Aviles-Giga energy. More recently, \cite{NewLorent} provides a quantitative version of the result in \cite{JOP}. 
\subsection{Hedgehog boundary conditions: $g(x,y) = (x,y).$ } 
Here we can again precisely determine the minimizers of $E_0$:
\bthm \label{radialhedge} For $\Omega=\bD$ and boundary data $g=(x,y)$ the two  functions 
${ u}_*^\pm :=r {\widehat{e}_r} \pm \sqrt{1-r^2}\widehat{e}_\theta$ are the only  minimizers of the problem
\[
\inf E_0({ u})
\]
taken over competitors $u\in H_{\rm div}(\bD;\mathbb{S}^1) \cap BV(\bD;\mathbb{S}^1)$ satisfying ${ u}_{\partial \bD}\cdot \nu_{\partial \bD} =g\cdot \nu_{\partial \bD}=1$ on $\partial\bD.$
\ethm
\begin{proof}
We note first that since $u\cdot\nu\equiv 1$ on $\partial\bD$ and $\abs{u}=1$, necessarily competitors must have traces satisfying
${ u}={ g}(x,y)=(x,y)$ along $\partial\bD.$

Now given any competitor ${ u}$, an application of the Cauchy-Schwarz and the Divergence Theorem gives
\beq
E_0({ u})\geq \frac{L}{2}\int_{\bD} (\dive { u})^2 \,dx\geq \frac{L}{2}\frac{1}{\pi}\left(\int_{\bD}\dive { u}\,dx \right)^2=2\pi L=E_0({ u}_*^{\pm}).\label{ustar}
\eeq
Hence ${ u}_*^{\pm}$ are minimizers and any other minimizing competitor would have to yield equality in both of the inequalities above.
Consequently, the only possible candidates for minimizers ${ u}$ must satisfy $J_{ u}=\emptyset$ so that ${ u}\in W^{1,1}(\bD)$ and $\dive { u}\equiv constant.$
The Divergence Theorem and the boundary conditions then imply that in fact $\dive { u}\equiv 2$ throughout $\bD$.

Now we expand the competitor ${ u}$ in a Fourier series as 
\begin{align*}
{ u} = \sum_{n \in \bZ} { u}_n(r) e^{i n \theta},
\end{align*}
where ${ u}_n(r) = f_n(r) + i g_n(r),$ are a sequence of complex valued functions that satisfy ${ u}_0(1) = 1$ and $ { u}_n(1) = 0$ if $n \neq 1.$
  In order to compute the divergence of ${ u}$ written in the Fourier development, we write ${ V}_n(r,\theta) := u_n(r) e^{i n \theta},$ and note that written as a vector field in $\R^2,$ we have 
\begin{align*}
{ V}_n (r,\theta) = \left(\begin{array}{l}
f_n(r) \cos n \theta - g_n(r) \sin n \theta \\
g_n(r) \cos n \theta + f_n(r) \sin n \theta
\end{array} \right).
\end{align*} 
A calculation then yields that 
\[
\dive { V}_n = \left( f_n^\prime(r) + \frac{n f_n(r)}{r} \right) \cos (n - 1)\theta - \left( g_n^\prime(r) + \frac{n g_n(r)}{r} \right) \sin ( n-1) \theta. 
\]
Using Plancherel and arguing as in \eqref{ustar} we find
\[
E_0({ u}) =  \frac{L}{2}\sum_n\int_\bD (\dive { V}_n)^2 \,dx\geq  \frac{L}{2}\int_\bD (\dive { V}_0)^2 \,dx\geq 2\pi L, 
\]
and so ${ u}={ V}_0={ u_0}$ with necessarily $\dive { V}_0=f_0^\prime + \frac{f_0}{r}\equiv 2$. Solving this ODE with the boundary condition $f_0(1)=1$ we find
$f_0(r)=r$ and since $\abs{{ u}}=1$, it follows that $g_0(r)=\pm\sqrt{1-r^2}$ so that ${ u}={ u}_*^+$ or ${ u}_-^+$.
\end{proof}

\subsection{Degree $-1$ boundary conditions: $g(x,y) = (x/R,-y/R).$ } \label{deg-1section}

In this section, we develop a solution of the Euler-Lagrange boundary value problem \eqref{el:5}-\eqref{el:4} with the symmetries hinted by a numerical solution of the relaxed problem. Although we do not claim that our construction yields a minimizer of the limiting functional, the minimizing property of our solution seems plausible given its close resemblance to the numerics, at least for a certain range of parameters of the problem.

We used the COMSOL Multiphysics\textsuperscript{\textregistered} finite elements software \cite{COMSOL} to solve the Euler-Lagrange equation associated with the energy functional \eqref{energy} in the circle of the radius $R=0.6$, subject to the boundary conditions $g(x,y) = (x/R,-y/R)$. The (local) minimizers in COMSOL were found by simulating the gradient flow for $E_\varepsilon$ on time intervals that were sufficiently large for a solution to reach an equilibrium. The results for $L=0.5$ and $\varepsilon=0.005$ are shown in Figs.~\ref{f:nd-1}-\ref{f:nd-3}.
\begin{figure}[htb]
\centering
    \includegraphics[width=3in]
                    {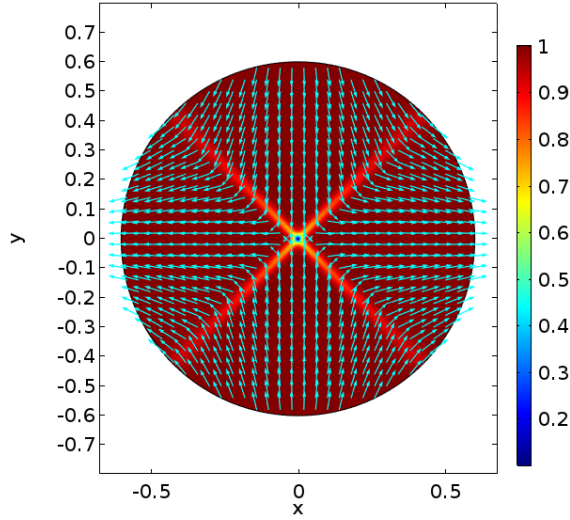}
    \caption{A solution $u$ of the Euler-Lagrange equation associated with the energy functional \eqref{energy} in the circle of the radius $R=0.6$, subject to the boundary conditions $g(x,y) = (x/R,-y/R)$. Both $u$ and $|u|$ are shown.}
  \label{f:nd-1}
\end{figure}
\begin{figure}[htb]
\centering
    \includegraphics[width=3in]
                    {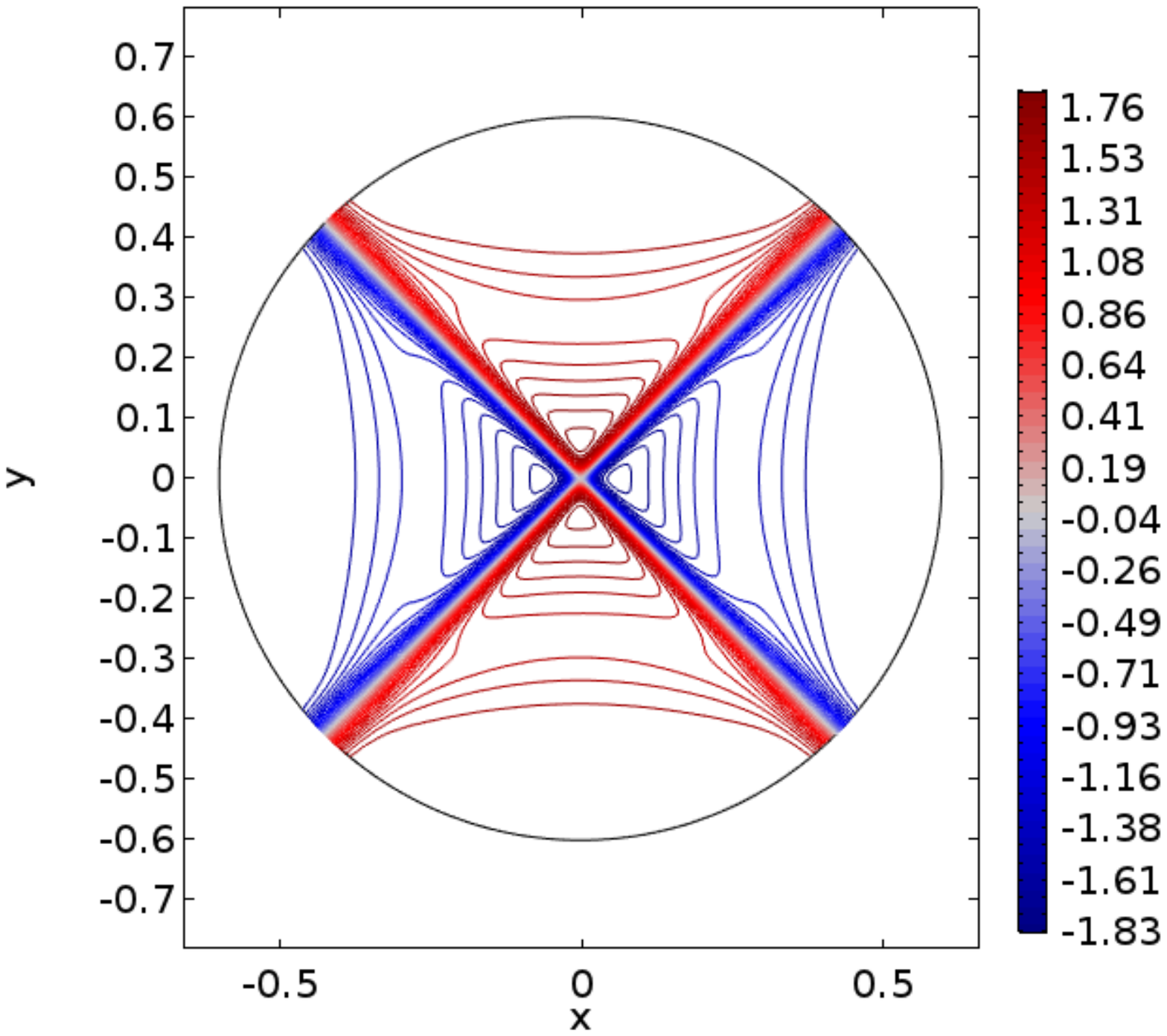}
    \caption{Level curves for the divergence of $u$, where $u$ is depicted in Fig.~\ref{f:nd-1}.}
  \label{f:nd-2}
\end{figure}
\begin{figure}[htb]
\centering
    \includegraphics[width=3in]
                    {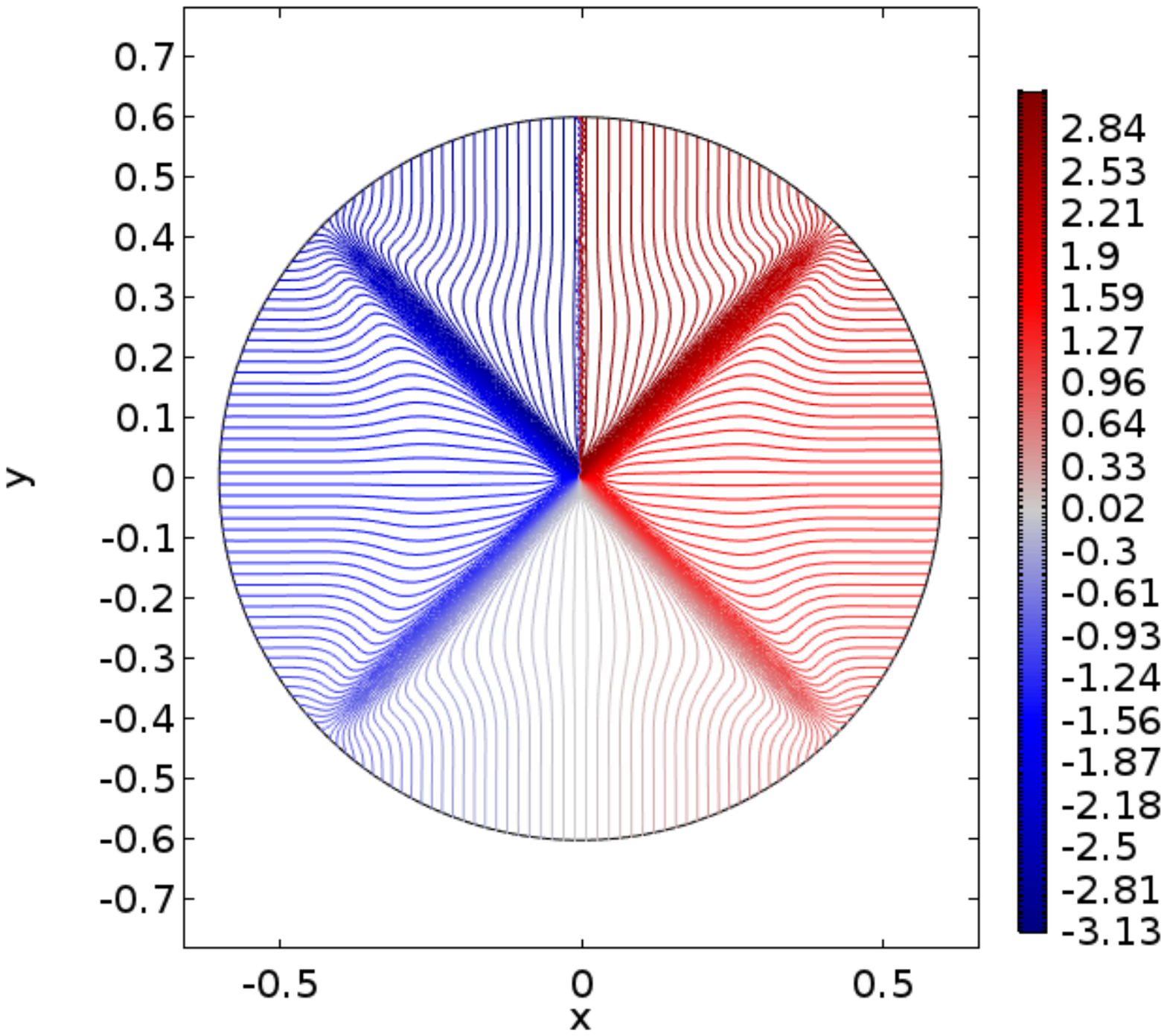}
    \caption{Level curves for the angle $\theta$, where $u=(\cos{\theta}, \sin{\theta})$ is depicted in Fig.~\ref{f:nd-1}.}
  \label{f:nd-3}
\end{figure}

First, we observe that (i) the jump set of the solution in Fig.~\ref{f:nd-1} consists of two straight lines inclined at $45^\circ$ to the horizontal axis and (ii) the solution is symmetric with respect to reflections about both these lines, as well as the vertical and horizontal axes. Along the lines of the jump set, the symmetry is such that the normal components from either side are equal, while the tangential components are equal in absolute value and opposite in sign. Further, (iii) on both axes, the solution vector is parallel to the axis itself and (iv) Fig.~\ref{f:nd-2} indicates that the sum of the traces of the divergence of $u$ on both sides of the jump set equals zero. The last observation is consistent with the required criticality condition \eqref{el:13} since the curvature of the jump set is zero. Thus, it would be sufficient to look for the solution of \eqref{el:5}-\eqref{el:4} in one eighth of a circle of radius $R$, and then extend the construction to the rest of the circle via symmetry. 
\begin{figure}[htb]
\centering
    \includegraphics[width=3in]
                    {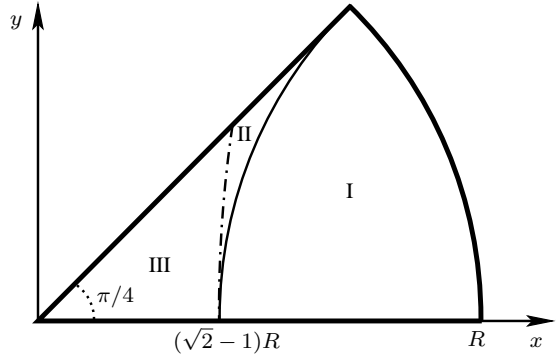}
    \caption{Regions corresponding to different characteristics families.}
  \label{fig:dcont1}
\end{figure}

Let $\Omega$ be a sector of the circle of radius $R$ as depicted in Fig.~\ref{fig:dcont1}. We seek a solution $u$ of \eqref{el:5}-\eqref{el:4} in the form \eqref{xst}-\eqref{tvst}, where $$J_u=\left\{(x,y)\in\mathbb R^2:y=x,x\in\left(0,R/\sqrt{2}\right)\right\},$$ subject to the Dirichlet boundary conditions 
\begin{equation}
\label{eq:d-1.1}
u=(1,0)\mbox{ when }y=0\mbox{ and }u=(x/R,-y/R)\mbox{ when }x^2+y^2=R^2.
\end{equation}
By our symmetry assumptions, the jump of $\dive u$ on $J_u$ is equal to $-2\,\dive u_-$, hence \eqref{el:4} takes the form
\begin{equation}
\label{el:4n}
L\,\dive u-2\,{\left(1-(u\cdot\nu_u)^2\right)}^{1/2}(u\cdot\nu_u)=0\mbox{ on }J_u,
\end{equation}
where we dropped the subscript $"-"$ for notational convenience. Our last assumption is based on the behavior of the numerical solution in Fig.~\ref{f:nd-1}. Considering the solution in the part of the disc corresponding to $\Omega$ in Fig.~\ref{f:nd-1} and recalling that $u=(\cos{\theta},\sin{\theta})$, in what follows we work with $\theta$ instead of $u$ and assume that 
\begin{equation}
\label{eq:trange}
\theta:\bar\Omega\to\left[-\pi/4,0\right].
\end{equation}

We begin by identifying two distinct families of characteristics that originate on the $x$-axis and recover the solution of the limiting problem in the regions $I$ and $III$ in Fig.~\ref{fig:dcont1}. 

\medskip\noindent {\bf 1.} First, taking into account \eqref{eq:d-1.1}, we construct a characteristic $$\left(x(s,t),y(s,t),\theta(s,t),v(s,t)\right)$$ with the initial data 
\begin{equation*}
\label{eq:d-1.2}
\left(x(s,0),y(s,0),\theta(s,0),v(s,0)\right)=\left(s,0,0,v_0(s)\right)\mbox{ for }s\in[s_0,R],
\end{equation*}
which terminates at some point $$\left(x\left(s,t^*(s)\right),y\left(s,t^*(s)\right)\right)=R\left(\cos{\left(\psi\left(s,t^*(s)\right)\right)},\sin{\left(\psi\left(s,t^*(s)\right)\right)}\right)$$ on the circular component of $\partial\Omega$ so that 
\begin{multline}
\label{eq:d-1.3}
\left(x\left(s,t^*(s)\right),y\left(s,t^*(s)\right),\theta\left(s,t^*(s)\right),v\left(s,t^*(s)\right)\right)\\=\left(R\cos{\left(\psi\left(s,t^*(s)\right)\right)},R\sin{\left(\psi\left(s,t^*(s)\right)\right)},-\psi\left(s,t^*(s)\right),v\left(s,t^*(s)\right)\right),
\end{multline}
for all $s\in[s_0,R]$. Here $\psi$ represents the polar angle for a vector $(x,y)$ while the parameter $s_0>0$ and the functions $v_0$ and $t^*$ are all to be determined in the course of solving the problem. Note that, as a consequence of \eqref{thetav1}, the characteristics and the field $u$ are mutually perpendicular at all points in $\Omega$, hence a characteristic intersecting the $x$-axis must be perpendicular to this axis at all points of intersection.
\begin{figure}[htb]
\centering
 \includegraphics[width=1.5in]
                    {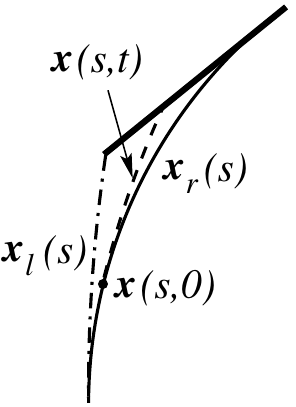}
  \caption{Characteristics construction in the intermediate region $II$.}
  \label{fig:dcont2}
\end{figure}

From \eqref{xst}-\eqref{tvst}, we conclude that
\begin{align}
&x(s,t)=\frac{1}{v_0(s)}\left[\cos\big(\theta(s,t)\big)-1\right]+t,\label{xstd}\\
&y(s,t)=\frac{1}{v_0(s)}\sin\big(\theta(s,t)\big),\label{ystd}\\
&\theta(s,t)=v_0(s)t,\label{tstd}\\
&v(s,t)=v_0(s),\label{vstd}
\end{align}
for all $s\in[s_0,R]$. Substituting $t^*(s)$ into these equations and using \eqref{eq:d-1.3} gives
\begin{align*} 
R\cos{\left(\psi\left(s,t^*(s)\right)\right)}=\frac{1}{v_0(s)}\left[\cos\big(\psi\left(s,t^*(s)\right)\big)-1\right]+t,
\end{align*}
\begin{align}
R\sin{\left(\psi\left(s,t^*(s)\right)\right)}=-\frac{1}{v_0(s)}\sin\big(\psi\left(s,t^*(s)\right)\big),\label{ystd1}
\end{align}
\begin{align*}
\psi\left(s,t^*(s)\right)=-v_0(s)t^*(s),
\end{align*}
for all $s\in[s_0,R]$. It follows from \eqref{ystd1} that
\begin{equation*}
\label{eq:d-1.4}
v_0\equiv-\frac{1}{R}
\end{equation*}
on $[s_0,R]$, that is all characteristic curves that intersect both the $x$-axis and the circular part of the boundary are themselves arcs of circles of radius $R$, centered on the $x$-axis. These curves clearly foliate a region in $\Omega$ labeled by $I$ in Fig.~\ref{fig:dcont1} and bounded from the left by the mirror image of the boundary arc with respect to the line $x=R/\sqrt{2}$. The corresponding leftmost characteristic curve in family $I$ will be denoted by ${\textbf{\textit x}}_r$. It intersects the $x$-axis at $x=(\sqrt{2}-1)R$ and is given by
\begin{align}
&x_r(t)=\sqrt{2}R-R\cos\big(t/R\big),\label{xstd2}\\
&y_r(t)=R\sin\big(t/R\big),\label{ystd2}\\
&\theta_r(t)=-t/R,\label{tstd2}
\end{align}
for all $t\in[0,\pi R/4]$.

\medskip\noindent {\bf 2.} Next, we turn our attention to the region labeled $III$ in Fig.~\ref{fig:dcont1}. This region is foliated by the characteristic curves intersecting both the $x$-axis and jump set $J_u=\left\{(x,y):y=x\right\}.$ Because they originate on the $x$-axis, these characteristics are given for $s\in[0,s_0]$ by the same equations as in \eqref{xstd}-\eqref{vstd}. For the remainder of this construction, we assume that $s\in[0,s_0]$. Suppose that intersection with the line $y=x$ occurs at some point $\left(x\left(s,t^*(s)\right),y\left(s,t^*(s)\right)\right).$ Then
\begin{align}
&x\left(s,t^*(s)\right)=y\left(s,t^*(s)\right),\label{cond1}\\
&Lv_0(t)+\cos^2{\theta\left(s,t^*(s)\right)}-\sin^2{\theta\left(s,t^*(s)\right)}=0.\label{cond2}
\end{align}
Here the second equation is the natural boundary condition \eqref{el:4n} recast into a simpler form using trigonometric identities. Equation \eqref{cond2} along with \eqref{eq:trange} imply that
\begin{equation}
\label{eq:d-1.9}
v_0(s)\leq0.
\end{equation}
From \eqref{xstd}, \eqref{ystd}, and \eqref{cond1}, we obtain
\begin{equation}
\label{eq:d-1.5}
\cos{\theta\left(s,t^*(s)\right)}-\sin{\theta\left(s,t^*(s)\right)}=1-sv_0(s).
\end{equation}
Then \eqref{cond2} and \eqref{eq:d-1.5} allow us to conclude that 
\begin{equation*}
\label{eq:d-1.6}
\cos{\theta\left(s,t^*(s)\right)}+\sin{\theta\left(s,t^*(s)\right)}=-\frac{Lv_0(s)}{1-sv_0(s)}
\end{equation*}
and
\begin{align*}
&2\cos{\theta\left(s,t^*(s)\right)}=1-tv_0(s)-\frac{Lv_0(s)}{1-sv_0(s)},\\
&2\sin{\theta\left(s,t^*(s)\right)}=-1+tv_0(s)-\frac{Lv_0(s)}{1-sv_0(s)}.
\end{align*}
Hence
\begin{equation*}
{\left(1-sv_0(s)\right)}^4-2{\left(1-sv_0(s)\right)}^2+L^2v_0^2(s)=0,
\end{equation*}
and
\begin{equation}
\label{eq:d-1.8}
{\left(1-sv_0(s)\right)}^2=1+\sqrt{1-L^2v_0^2(s)}.
\end{equation}
Here the sign in front of the square root follows from \eqref{eq:d-1.9}. Now let
\[F(p):={\left(1-tp\right)}^2-\sqrt{1-L^2p^2}-1.\]
Clearly, $F$ is continuous on $[-1/L,0]$ for every $s\in[0,s_0]$ and $$F(0)=-1<0\mbox{ and }F\left(-\frac{1}{L}\right)={\left(1+\frac{s}{L}\right)}^2-1\geq0.$$
Thus, there exists $-\frac{1}{L}\leq v_0(s)<0$ such that \eqref{eq:d-1.8} holds. Furthermore, by \eqref{eq:d-1.8}, we have the bound
$1-sv_0(s)<\sqrt{2},$
so that
$v_0(s)>-(\sqrt{2}-1)/s,$
and, in particular,
\begin{equation}
\label{eq:d-1.10}
v_0((\sqrt{2}-1)R)>-\frac{1}{R}.
\end{equation}
Note that the rightmost characteristic ${\textbf{\textit x}}_l$ in the family $III$ originates from the same point $((\sqrt{2}-1)R,0)$ on the $x$-axis as the characteristic ${\textbf{\textit x}}_r$ in the family $I$ and both ${\textbf{\textit x}}_l$ and ${\textbf{\textit x}}_r$ are tangent to each other at $((\sqrt{2}-1)R,0)$. The inequality \eqref{eq:d-1.10} demonstrates that the radius of ${\textbf{\textit x}}_r$ is smaller than the radius of ${\textbf{\textit x}}_l$ and so there is a wedge-shaped region in $\Omega$, labeled $II$ in Fig.~\ref{fig:dcont2}, which is covered neither by the characteristics from the family $I$ nor by the characteristics from the family $III$. In Step {\bf 3.} below, we construct the third family of characteristics that extends the solution to the region $II$.

We conclude this part of the construction by showing that the characteristics of the family $III$ indeed foliate the region $III$. We take the derivative of both sides of \eqref{eq:d-1.8} with respect to $s$ and solve for $v_0^\prime(s)$ to obtain
\[v_0^\prime(s)=-v_0(s){\left[s-\frac{L^2v_0(s)}{2(1-sv_0(s))\sqrt{1-L^2v_0^2(s)}}\right]}^{-1}>0.\]
It follows that the characteristic curves in the region $III$ are the circular arcs having curvature that increases with $s$. Since these curves also cross the $x$-axis at $90^\circ$, they completely cover the region $III$ without intersecting one another. We also note that $\lim_{s\to0}v_0(s)=-\frac{1}{L}$ and so the divergence of our solution in the region $III$ remains bounded.

\medskip\noindent {\bf 3.} Finally, we use characteristics to extend the solution to the region $II$. The procedure is illustrated in Fig.~\ref{fig:dcont2}. We use the curve \eqref{xstd2}-\eqref{tstd2} as the initial data for the new family of characteristics. For the remainder of this section, we will assume that $s\in(0,\pi R/4)$.  Let
\begin{align*}
&x_0(s)=\sqrt{2}R-R\cos\big(s/R\big),\\
&y_0(s)=R\sin\big(s/R\big),\\
&\theta_0(s)=-s/R.
\end{align*}
Then, from \eqref{xst}-\eqref{tvst}, we have that
\begin{align}
&x(s,t)=\frac{1}{v_0(s)}\left[\cos\big(\theta(s,t)\big)-\cos\big(s/R\big)\right]+\sqrt{2}R-R\cos\big(s/R\big),\label{xstd3}\\
&y(s,t)=\frac{1}{v_0(s)}\left[\sin\big(\theta(s,t)\big)+\sin\big(s/R\big)\right]+R\sin\big(s/R\big),\label{ystd3}\\ \nonumber
&\theta(s,t)=v_0(s)t-s/R,\\ \nonumber
&v(s,t)=v_0(s).
\end{align}
The new characteristic curves are still assumed to terminate on the jump set $y=x$, hence they must satisfy the conditions \eqref{cond1}-\eqref{cond2}. Setting $\theta^*(s)=\theta(s,t^*(s))$ and simplifying, these conditions take the form
\begin{align}
&\cos{\theta^*(s)}-\sin{\theta^*(s)}=A(s),\label{cond3}\\
&\cos{\theta^*(s)}+\sin{\theta^*(s)}=-\frac{Lv_0(s)}{A(s)},\label{cond4}
\end{align}
where
\begin{equation}
\label{eq:d-1.11}
A(s):=\sqrt{2}\left[(Rv_0(s)+1)\sin\big(s/R+\pi/4\big)-Rv_0(s)\right].
\end{equation}
The assumption \eqref{eq:trange} implies that
\begin{equation}
\label{eq:d-1.12}
v_0(s)\leq0\mbox{ and }A(s)>0.
\end{equation}
Following the same procedure as in Step {\bf 2.}, we find that $v_0(s)$ satisfies
\begin{equation}
\label{eq:d-1.13}
A^2(s)=1+\sqrt{1-L^2v_0^2(s)},
\end{equation}
hence
\begin{equation}
\label{eq:d-1.14}
v_0(s)\geq-\frac{1}{L}\mbox{ and }A(s)\leq\sqrt{2}.
\end{equation}
The second inequality in \eqref{eq:d-1.14} is equivalent to 
\[v_0(s)\geq-\frac{1}{R}\]
and, combining this inequality with the first inequality in \eqref{eq:d-1.12} and the first inequality in \eqref{eq:d-1.14}, we have
\begin{equation}
\label{eq:d-1.15}
-\min\left\{\frac{1}{R},\frac{1}{L}\right\}\leq v_0(s)\leq0.
\end{equation}
Now, let
\[F(p):=2{\left[(Rp+1)\sin\big(s/R+\pi/4\big)-Rp\right]}^2-\sqrt{1-L^2p^2}-1\]
and
\[q=\min\left\{\frac{1}{R},\frac{1}{L}\right\}.\]
Clearly, $F$ is continuous on $[-q,0]$ for every $s\in(0,\pi R/4)$ and $$F(0)=-2\cos^2\big(s/R+\pi/4\big)<0,$$while$$F\left(-q\right)=\left\{\begin{array}{ll}2{\left[(1-R/L)\sin\big(s/R+\pi/4\big)+R/L\right]}^2-1>0,&L\geq R,\\1-\sqrt{1-{(L/R)}^2}>0,&L<R.\end{array}\right.$$
This implies that there exists $v_0(s)\in(-q,0)$ such that \eqref{eq:d-1.13} holds and, therefore, (i) $v$ is uniformly bounded in the region $II$, (ii) the inequality in \eqref{eq:d-1.15} can be considered to be strict, and (iii) $v$ experiences a jump on ${\textbf{\textit x}}_r$. Note that, at the same time, $\theta$ is continuous across ${\textbf{\textit x}}_r$ by construction.

It remains to show that the characteristic curves cover the entire region $II$, without intersecting each other. We begin by proving 
\begin{lemma}
\label{l:inc}
The functions $v_0$ and $\theta^*$ are, respectively, strictly increasing and strictly decreasing on $(0,\pi R/4)$.
\end{lemma}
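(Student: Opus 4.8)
The plan is to regard $v_0=v_0(s)$ and $\theta^*=\theta^*(s)$ on $(0,\pi R/4)$ as the functions implicitly determined by \eqref{cond3}, \eqref{cond4} and the resulting relation \eqref{eq:d-1.13}, to prove strict monotonicity of $v_0$ by an implicit-function-theorem computation of $v_0'(s)$, and then to deduce the monotonicity of $\theta^*$ algebraically. As preparation I would assemble the sign data valid on the open interval: by the strict forms of \eqref{eq:d-1.12}, \eqref{eq:d-1.14} and \eqref{eq:d-1.15} noted after the existence argument, one has $-q<v_0(s)<0$ with $q=\min\{1/R,1/L\}$, so in particular $|v_0|<1/L$, $Rv_0+1>0$ and $A(s)>0$; moreover, setting $\phi:=s/R+\pi/4\in(\pi/4,\pi/2)$ gives $\cos\phi>0$ and $\sin\phi<1$. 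Because $|v_0|<1/L$, the function $F(p,s)=2[(Rp+1)\sin\phi-Rp]^2-\sqrt{1-L^2p^2}-1$ introduced just before the lemma is $C^1$ near $(v_0(s),s)$, and the identity $F(v_0(s),s)\equiv0$ may be differentiated.

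Writing $B:=(Rv_0+1)\sin\phi-Rv_0=A/\sqrt2>0$, I would compute the two partials of $F$. Since $\partial_sB=(Rv_0+1)\cos\phi/R$, one obtains $\partial_sF=4B(Rv_0+1)\cos\phi/R>0$, the sign coming from $B>0$, $Rv_0+1>0$ and $\cos\phi>0$. Since $\partial_pB=R(\sin\phi-1)<0$, one obtains $\partial_pF=4BR(\sin\phi-1)+L^2v_0/\sqrt{1-L^2v_0^2}$, in which both terms are strictly negative (the first because $B>0$ and $\sin\phi<1$, the second because $v_0<0$); hence $\partial_pF<0$. The implicit function theorem then gives $v_0'(s)=-\partial_sF/\partial_pF>0$, so $v_0$ is strictly increasing.

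For $\theta^*$ I would first show that $A$ is strictly increasing: differentiating \eqref{eq:d-1.13} yields $2AA'=-L^2v_0v_0'/\sqrt{1-L^2v_0^2}$, which is positive since $v_0<0$ and $v_0'>0$, so $A'>0$. Rewriting \eqref{cond3} as $\sqrt2\cos(\theta^*+\pi/4)=A$ with $\theta^*+\pi/4\in[0,\pi/4]$, where cosine is strictly decreasing, forces $\theta^*$ to decrease strictly as $A$ increases, whence $\theta^*$ is strictly decreasing in $s$.

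The step I expect to be the crux is fixing the sign of $\partial_pF$: its negativity rests entirely on the strict inequalities $Rv_0+1>0$ and $v_0<0$ holding throughout the open interval, so the real work is to confirm that the strict version of \eqref{eq:d-1.15} (together with the accompanying bound $v_0>-1/R$) is genuinely in force there. Once the signs of $\partial_sF$ and $\partial_pF$ are secured, the remaining manipulations are routine.
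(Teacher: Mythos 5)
Your proposal is correct and follows essentially the same route as the paper: the paper likewise differentiates \eqref{eq:d-1.13} with respect to $s$, solves for $v_0^\prime(s)$, and uses the (strict) bounds \eqref{eq:d-1.15} together with \eqref{eq:d-1.11} to conclude $v_0^\prime>0$ and then $\theta^{*\prime}<0$; your implicit-function-theorem packaging and your shortcut for $\theta^*$ via $\sqrt{2}\cos(\theta^*+\pi/4)=A$ with $A^\prime>0$ are only cosmetic refinements of that argument. The crux you flagged is indeed available in the paper: the existence argument preceding the lemma establishes $v_0(s)\in(-q,0)$ with $q=\min\{1/R,1/L\}$, i.e.\ the strict form of \eqref{eq:d-1.15}.
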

\begin{proof}
Taking the derivative of both sides of \eqref{eq:d-1.13} with respect to $s$, solving for $v_0^\prime(s)$, and using \eqref{eq:d-1.15}, we determine that $v_0^\prime(s)>0$ for all $s\in(0,\pi R/4)$. This establishes monotonicity of $v_0$. Likewise, solving \eqref{cond3}-\eqref{cond4} for $\cos{\theta^*}$, taking the derivative with respect to $s$ and using the just established fact that the $v_0^\prime>0$ on $(0,\pi R/4)$, along with \eqref{eq:d-1.11}, \eqref{eq:d-1.15}, proves that $\theta^{*\prime}<0$ on $(0,\pi R/4)$.
\end{proof}
To demonstrate that no two characteristic curves can intersect, we suppose, by contradiction, that a circular arc of a characteristic $C_1$ intersects another circular arc of a characteristic $C_2$ before reaching $y=x$, where $C_1$ corresponds to $s= s_1$ whereas $C_2$ corresponds to $s=s_2$ with $s_1<s_2$. Using \eqref{xstd3}, \eqref{ystd3} and the monotonicity of $v_0$, we know that the curvature of $C_1$ is greater than the curvature of $C_2$. Since $C_1$ starts out (i.e. at $t=0$) to the left of $C_2$, this intersection could not be merely tangential since a circle of larger curvature can't sit outside of a circle of smaller curvature. Thus, the intersection is transversal. Now the angle between an incoming characteristic and the line $y=x$ is the non-negative angle $\theta^* + \pi/4$ and, if the intersection is transversal, then necessarily $\theta^* (s_1) + \pi/4 < \theta^*(s_2) + \pi/4$ contradicting Lemma \ref{l:inc}.

We end this section by plotting the analytical counterparts of Figs.~\ref{f:nd-2}-\ref{f:nd-3} obtained in MATLAB\textsuperscript{\textregistered} using the characteristics solutions constructed above.

\begin{figure}[htb]
\centering
    \includegraphics[width=3in]
                    {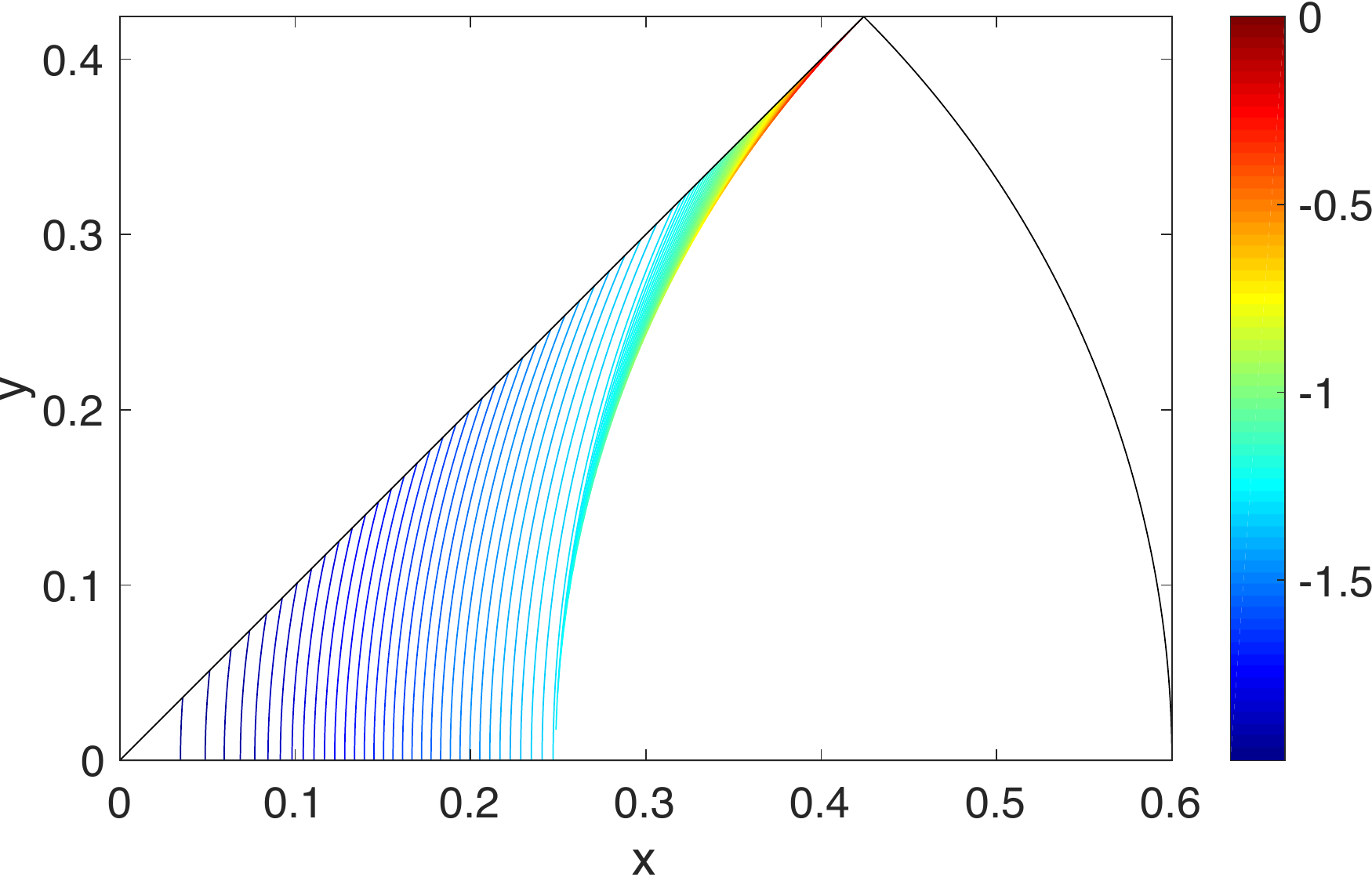}
    \caption{Level curves for the divergence of $u$, where $u$ is a solution obtained using characteristics. The divergence is constant in the empty region.}
  \label{f:cs-1}
\end{figure}
\begin{figure}[htb]
\centering
    \includegraphics[width=3in]
                    {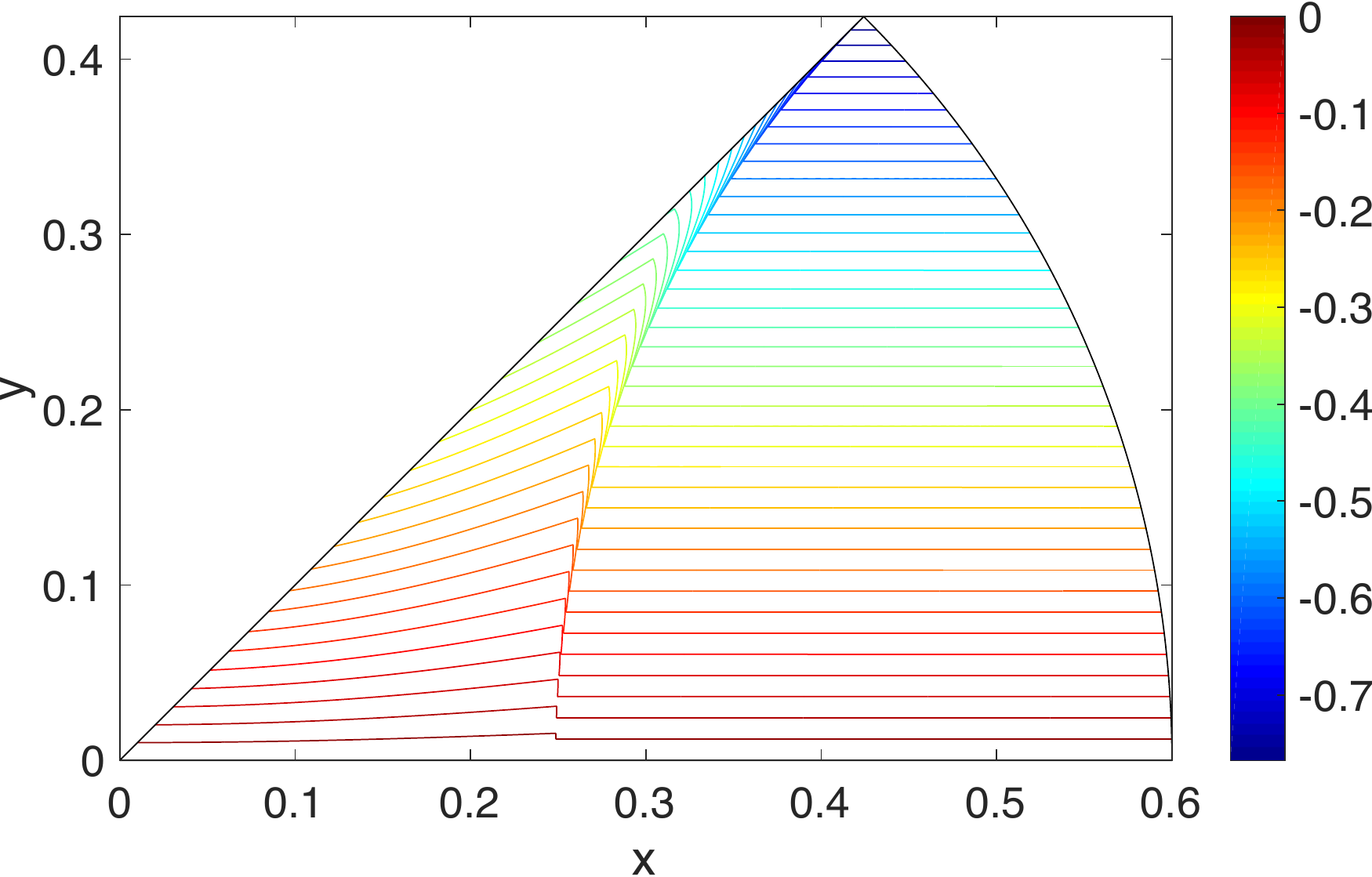}
    \caption{Level curves for the angle $\theta$, where $u=(\cos{\theta}, \sin{\theta})$ is depicted in Fig.~\ref{f:cs-1}.}
  \label{f:cs-2}
\end{figure}
The Figs.~\ref{f:cs-1}-\ref{f:cs-2} should be compared to the solution in the sector in Figs.~\ref{f:nd-2}-\ref{f:nd-3}, corresponding to the polar angle ranging between $0$ and $45^\circ$. The regions $I$ and $II$ are clearly visible in Figs.~\ref{f:nd-2}-\ref{f:nd-3} and there is a good match between Figs.~\ref{f:cs-1}-\ref{f:cs-2} and Figs.~\ref{f:nd-2}-\ref{f:nd-3} in these regions. The discrepancy between the solutions in the region $III$ can be attributed to the qualitative differences between minimizers of the $\varepsilon$-level and $\Gamma$-limit problems. The energies of the characteristics and numerical solutions are depicted in Fig.~\ref{f:cs-3} for a small range of $L$ values. The plots demonstrate that both numerical solution and the solution constructed using charateristics have energy increasing with $L$ on $L\in[0.1,0.7]$. The systematic difference between the graphs can once again be explained by the fact that the corresponding functions are critical points of the different energy functionals.
\begin{figure}[htb]
\centering
    \includegraphics[width=3in]
                    {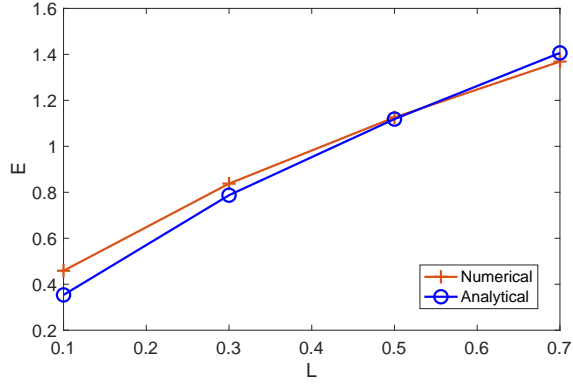}
    \caption{Energy of the critical point as a function of $L$.}
  \label{f:cs-3}
\end{figure}
{
\subsection{An example in an annulus: curved walls } 

In this section we briefly outline an example where our analysis suggests that the jump set can occur along a portion of the boundary with a jump set, and might in general not be a straight line segment.

We fix a number $R > 1,$ and let $\Omega$ denote an annulus described in polar coordinates by $\Omega := \{1 < r < R \}.$ For the boundary conditions $g$ defined by $ g(1,\theta) = -\widehat{e}_\theta, 
g(R, \theta) = \widehat{e}_\theta,$ we study the problem of minimizing the $E_0$ energy among competitors $u_{\partial \Omega}\cdot \nu_{\partial \Omega} = g\cdot \nu_{\partial \Omega} = 0.$ It is reasonable to expect that a minimizer is radial so we work within the ansatz 
\begin{align} \label{ann-ansatz}
u(r, \theta) = p(r) \widehat{e}_r + q(r) \widehat{e}_\theta, 
\end{align} 
where $p^2 + q^2 \equiv  1, p(1) = 0 = p(R).$  Within this ansatz, $\dive u = \frac{1}{r}(r p(r))_r,$ and the jump set is composed of a union of circles, possibly occurring at the boundary of $\Omega.$ Away from jumps, criticality of $\int (\dive u)^2 \,dx $ within this ansatz requires that $p(r)$ satisfies the ODE $\frac{\partial}{\partial r} \left( \frac{1}{r} \frac{\partial}{\partial r} (r p(r)) \right) = 0,$ so that  $p(r)$ takes the form $p(r) = Cr + \frac{D}{r}$ for constants $C, D.$ In the absence of a jump circle in $\Omega$, the boundary conditions on $p$ would force $p(r) \equiv 0$, and then either $q(r) \equiv 1$ or $q(r) \equiv -1$. This results in boundary walls, either along the circle $\rho = 1$ or along $\rho = R$, respectively carrying energies $E_0(\hat{e}_\theta) = \frac{8\pi}{3}$ or $E_0(-\hat{e}_\theta) = \frac{8\pi R}{3}.$ 

Elementary calculations that we present below demonstrate that for any $R>1,$ for an interval of $L-$values of the form $(0,L_*(R))$ where $L_*(R) < \frac{8}{3} \frac{R^2 - 1}{R^2 + 1} \left( 1 - \frac{\sqrt{2} R}{\sqrt{R^2+1}} \left(\frac{3}{4}\right)^{3/2} \right)$, the energy $E_0$  within the ansatz \eqref{ann-ansatz} has an internal wall with energy strictly smaller than $\frac{8\pi}{3},$ which is the energy associated to a boundary wall. 
 
On the other extreme, we also show below that for any fixed $R > 1$ and $L$ sufficiently large depending on $R,$ the minimizer of $E_0$ with these ``mismatch'' boundary conditions and the radial ansatz \eqref{ann-ansatz} necessarily has its wall at the inner boundary $\rho = 1$. The associated energy is $E_0(\hat{e}_\theta) = \frac{8\pi}{3}.$ 

In the absence of boundary walls, we see that an internal wall is present which we suppose occurs along the circle of radius $\rho$ with corresponding normal component given by $a = p(\rho).$  It then follows that the function $p$ is given by the formula 
\begin{align*}
p(r) = \left\{
\begin{array}{cc}
\frac{a \rho}{\rho^2 - 1}\left(r - \frac{1}{r} \right) & 1 < r < \rho\\
- \frac{a \rho}{R^2 - \rho^2} \left( r - \frac{R^2}{r} \right) & \rho < r < R.
\end{array}
\right. 
\end{align*}
We set $q(r) = - \sqrt{1 - p^2(r)} $ for $1 < r < \rho$ and $q(r) = \sqrt{1- p^2(r)}$ for $\rho < r < R.$ Computing the energy of $u_\ast = u_\ast^{\rho, a(\rho)},$ we find 
\begin{align} \label{eq:energy}
E_0(u_\ast) = 2 \pi L a^2 \rho^2 \left(\frac{1}{\rho^2 - 1} + \frac{1}{R^2 - \rho^2} \right) + \frac{8}{3} \pi \rho (1 - a^2)^{3/2}. 
\end{align}
For $\rho \in (1, R),$ we now enforce the natural boundary conditions \eqref{el:4} and the criticality condition for the jump circle \eqref{el:13} where we must use $\kappa = -\frac{1}{\rho}.$ The natural boundary conditions, \eqref{el:4} yield
\begin{align} \label{eq:nbc}
2 a L \rho \left( \frac{1}{\rho^2 - 1} + \frac{1}{R^2 - \rho^2} \right) = 4 a (1 - a^2)^{1/2}.
\end{align}
Similarly, criticality of jump, \eqref{el:13}, yields 
\begin{align} \label{eq:jump}
\frac{4a^2 \rho^2}{(R^2 - \rho^2)^2} - \frac{4a^2 \rho^2}{(\rho^2 - 1)^2}  = -\frac{8}{3L\rho} (1 - a^2)^{1/2} (1 + 2a^2). 
\end{align}
We define $c_a := \frac{3a^2}{2a^2 + 1} \in (0,1),$ and use \eqref{eq:nbc} in \eqref{eq:jump} to discover that 
\begin{align*}
\rho^2 \left( \frac{1}{\rho^2 - 1} - \frac{1}{R^2 - \rho^2} \right) = \frac{1}{c_a},
\end{align*}
or equivalently,
\begin{align} \label{quadratic}
(1 - 2c_a)\rho^4 - \rho^2(1+R^2)(1-c_a) +R^2 = 0. 
\end{align}
If $c_a = 1/2$ or equivalently $a = 1/2,$ we find that $\rho^2 = \frac{2R^2}{1+R^2}.$ We argue that when $a > 1/2,$ the equation \eqref{quadratic} viewed as a quadratic in $\rho^2$ has no real zeroes, while if $a < 1/2,$ it has a unique zero in $(1,R^2).$ Indeed, the sum of the roots $\rho_1^2 + \rho_2^2 = \frac{(1+R^2)(1-c_a)}{1 - 2c_a}.$ This yields the condition that $c_a < \frac{1}{2},$ or equivalently, that $a < 1/2.$  For any such $a,$ it is easy to argue that \eqref{quadratic} has a unique zero $\rho^2(a) \in (1 , R^2).$ Indeed, evaluating the LHS of \eqref{quadratic} at $\rho^2 = 1,$ yields  $c_a (R^2 - 1) > 0,$ while evaluating \eqref{quadratic} at $\rho^2 = R^2$ yields  $c_a R^2(1-R^2) < 0.$   It must therefore have an odd number of real zeroes in this interval, implying uniqueness of the zero.  

We therefore have that a solution of the pair of equations \eqref{eq:nbc}-\eqref{eq:jump} satisfies $0 < a \leqslant 1/2.$ To derive a  bound on $\rho$ satisfying these equations we use the definition of $c_a$ in \eqref{quadratic} to find
\begin{align} \label{eq:asquare}
a^2 = \frac{(\rho^2 - 1)(R^2 - \rho^2)}{- 4 \rho^4 + ( 1 + R^2)\rho^2 + 2 R^2}
\end{align}
The condition $0 < a^2 < \frac{1}{4},$ then yields from \eqref{quadratic} that $\rho^2 < \frac{2R^2}{R^2 + 1} .$ 
Using \eqref{eq:asquare} in \eqref{eq:nbc}, we arrive at a single equation in $\rho,$ 
\begin{align}
L \rho \left( \frac{R^2 - 1}{(\rho^2 - 1)(R^2 - \rho^2)}\right) = 2 \left( \frac{3(\rho^4 - R^2)}{4\rho^4 - (1+R^2)\rho^2 - 2R^2}\right)^{1/2}. 
\end{align}
Introducing $z := \rho^2,$ we are led to seeking $z \in (1, \frac{2R^2}{1+R^2})$ solving the polynomial 
\begin{align*}
g_{R,L}(z) := L^2(R^2-1)^2 z \left( z^2 - \frac{1+R^2}{4}z - \frac{R^2}{2} \right) + 3 (R^2-z^2)(z-1)^2(R^2-z)^2. 
 \end{align*}
Note that $g_{R,L}(1) = \frac{3}{4} L^2(1 - R^2)^3 <0.$  Concerning this polynomial, we note that for \textit{fixed} $R > 1,$ and $L$ sufficiently large depending on $R,$ the polynomial $g_{R,L}(z) < 0$ for $z \in (1 , \frac{2R^2}{1+R^2}).$ Consequently, $g_{R,L}$ does not have a zero for such $(R,L)$ values. It follows that for such $(R,L)-$ values, an internal wall can not occur. Therefore, among radial competitors with a single wall, the minimizer must have a wall at the inner boundary $\rho = 1,$ with corresponding $a = 0.$ As discussed earlier, this entails that the minimizer is $u_\ast \equiv \widehat{e}_\theta,$ with $E_0(u_\ast) = \frac{8}{3}\pi.$ 

We next argue that for any $R > 1,$ if $L$ is sufficiently small, the wall occurs in the interior. To see this, we simply construct a competitor whose energy is smaller than $\frac{8\pi}{3}.$ Setting $a  = 1/2,$ and $\rho^2 = \frac{2R^2}{R^2 + 1}$ in \eqref{eq:energy}, we find that the energy of this competitor is smaller than $\frac{8\pi}{3},$ provided 
\begin{align*}
L < \frac{8}{3} \frac{R^2 - 1}{R^2 + 1} \left( 1 - \frac{\sqrt{2} R}{\sqrt{R^2+1}} \left(\frac{3}{4}\right)^{3/2} \right),
\end{align*}
where the quantity on the right is clearly positive since $R > 1.$ A radial minimizer with a single wall of course exists by elementary compactness and continuity arguments. It follows that for any $R > 1,$ for an interval of small positive $L-$values, the radial minimizer has an internal wall.

In Fig.~\ref{f:an-1}, we illustrate observations made in this section by presenting the results of gradient flow simulations for the functional $E_\varepsilon$ for two different values of $L$. For the smaller value of $L=0.2$, the (local) minimizer has a shallower circular wall in the interior of the domain, while the the minimizer for $L=2$ has a deeper wall that coincides with the inner boundary of the annulus. Note that the simulations were done without assuming that competitors are radially symmetric---the apparent symmetry of minimizers suggests that it might be reasonable to consider the ansatz \eqref{ann-ansatz}.

\begin{figure}[htb]
\centering
    \includegraphics[width=3in]
                    {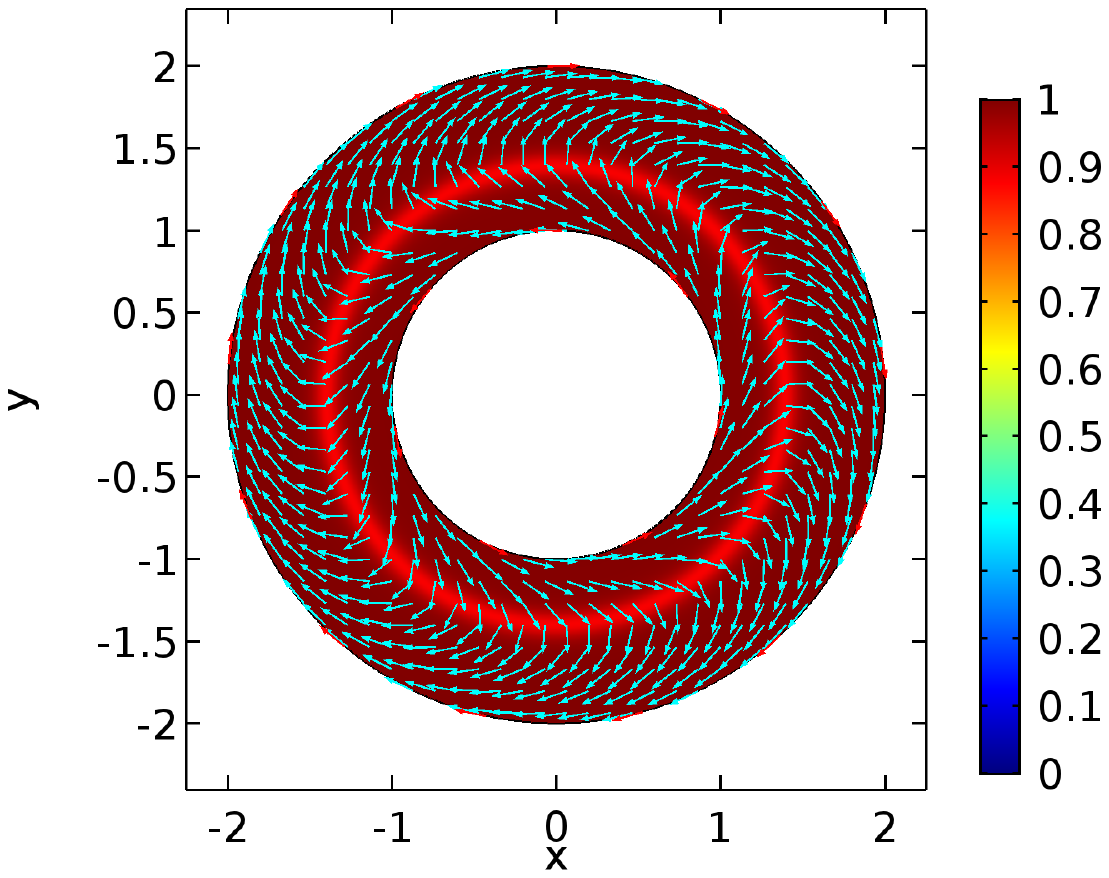}     \includegraphics[width=3in]
                    {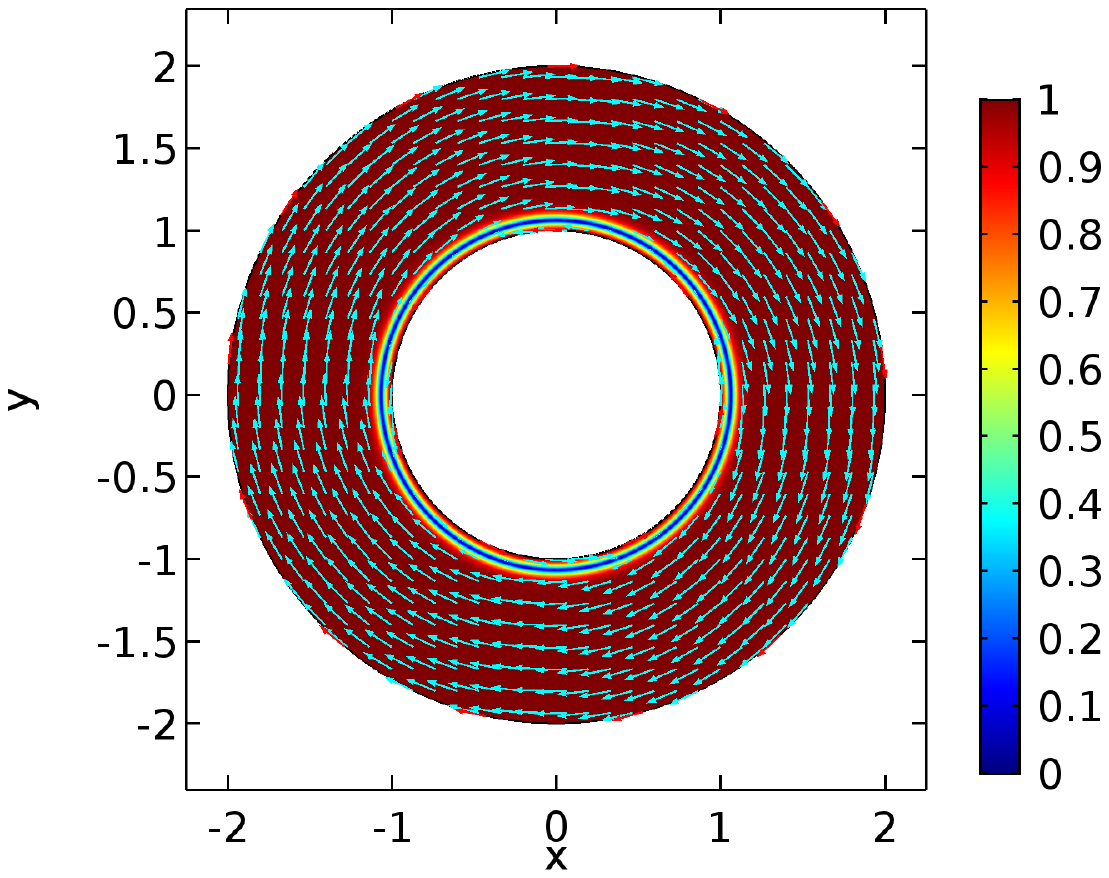}
    \caption{Energy minimizers in an annulus for $L=0.2$ (left) and $L=2$ (right). Here $\varepsilon=0.03$ and the color represents $|u|$.}
  \label{f:an-1}
\end{figure}
}
 \section{Results for the special case of a rectangle}
 
 In this section we pose the problem on a rectangle, taking $\Omega=(-T,T)\times(-H,H)$ for positive constants $T$ and $H$. Furthermore,
 we specialize the boundary conditions on competitors $u:\Omega\to\R^2$ to be given by
 \beq
 u(x,\pm H)=\big(\pm\sqrt{1-a^2},a\big)\;\mbox{for}\;\abs{x}\leq T,\quad u\;\mbox{is}\;2T\mbox{-periodic in}\;x.\label{rectbc}
 \eeq 
 for some constant $a\in [0,1)$. The rationale for considering $E_\e$ and the $\Gamma$-limit $E_0$ in this rather special setting is to 
 focus on the structure of wall transitions in as simple a situation as possible. A primary focus will be on examining the relative favorability of one-dimensional--that is purely $y$-dependent structures versus two-dimensional structures such as cross-ties the one associates with related models in micromagnetics, cf. e.g. \cite{ARS}. Other goals we have in mind concern in focusing on this special case are to better understand the relative weights given to jump energy versus divergence for minimizers as well as the possible emergence of periodic structures on a scale smaller than the fixed rectangle width $2T$.

 \subsection{Study of the problem in a rectangle within a one-dimensional ansatz}
We begin our analysis of $E_\e$ and $E_0$ on the rectangle subject to the boundary conditions \eqref{rectbc} by first studying the variational problem among one-dimensional competitors, i.e. functions of $y$ alone. More specifically, for $0 \leqslant |a| < 1$ we consider the space of admissible functions
\begin{align*}
\sA^{1} (a) := \{ u = u(y) \in H^1((-H,H); \R^2), u(\pm H) = \big(\pm\sqrt{1-a^2},a\big) \}. 
\end{align*}
and consider the variational problem 
\beq
\min_{u \in \sA^1(\alpha)} \oned (u) \label{1deps}
\eeq
where
\begin{align} \label{1denergy}
\oned (u) := \frac{1}{2} \int_{-H}^H \e |u^\prime|^2 + \frac{1}{\e} (|u|^2 - 1)^2 + L ( u_2^\prime)^2 \,dy.
\end{align}
The corresponding $\Gamma$-limit $\onedlim$ is now defined
over the class
\beq
\sA^0:=\{u=(u_1,u_2):\, u_1^3\in BV\big((-H,H)\big),\,u_2\in H^1\big((-H,H)\big),\,u^{(2)}(\pm H)=a,\,\abs{u}=1\;a.e.\;y\}, 
\label{1dadmiss}
\eeq
where the boundary conditions on $u_2$ comes from  \eqref{normaltrace}.
Then $E_0$ from \eqref{Ezero} takes the form
\begin{multline}
\onedlim (u) =: \frac{L}{2}\int_{-H}^H (u_2^\prime)^2 \,dy + 4/3 \sum_{y_j \in J_{u_1}} \big(1 - u_2^2(y_j) \big)^{3/2}\\+
\frac{1}{6}\abs{\,u_1(-H)+\sqrt{1-a^2}}^3+\frac{1}{6}\abs{\,u_1(H)-\sqrt{1-a^2}}^3 .\label{1dgamdefn}
\end{multline}

Not surprisingly, in this one-dimensional setting we can prove a much stronger compactness statement than is possible in the two-dimensional setting of Theorem \ref{comp}. Here we establish
\bthm\label{onedcompactness}
Let $u_\e=(u_\e^{(1)},u_\e^{(2)}) \in \sA^1(a)$ with $\oned (u_\e) \leqslant C.$ Then, up to extraction of subsequences,  one has
$u_\e^{(1)}\to u_1$ in $L^3(-H,H)$ for some function $u_1$ such that $u_1^3\in BV(-H,H)$ and one has
$u_\e^{(2)}\to u_2$ in $C^{0,\gamma}$ for all $\gamma<1/2$. Furthermore, $\abs{(u_1,u_2)}=1$ a.e.
\ethm
\begin{proof}
Precompactness of $\{u_\e^{(2)}\}$ in $C^{0,\gamma}(-H,H)$ for $\gamma<1/2$ is clear from the uniform $H^1$ bound and Sobolev imbedding. The thrust of the rest of the proof will be to prove the statement about $\{u_\e^{(1)}\}.$ To this end, we define        
            \begin{align*}
                \psi_\e(y) := \int_{-u_\e^{(1)}}^{u_\e^{(1)}} \big(1-s^2 - (u_\e^{(2)})^2\big) \,ds = 2u_\e^{(1)}-2u_\e^{(1)}(u_\e^{(2)})^2
                -\frac{2}{3} (u_\e^{(1)})^3 .
            \end{align*}
            Since we have a uniform $L^4$ bound on $u_\e^{(1)}$ from the energy bound $\oned (u_\e) \leqslant C$ it readily follows 
            that $\psi_\e$ is uniformly bounded in $L^1(-H,H).$
            Now we estimate the total variation of $\psi_\e$. We have
            \begin{eqnarray*}
            \int_{-H}^H\abs{\psi_\e'}\,dy&&\leq
            2\int_{-H}^H \abs{1-\abs{u_\e}^2}\abs{u_\e^{(1)}\,'}\,dy+4\int_{-H}^H \abs{u_\e^{(1)}u_\e^{(2)}}\abs{u_\e^{(2)}\,'}\,dy\\
            &&\leq \frac{1}{\e} \int_{-H}^H\left(1-\abs{u_\e}^2\right)^2\,dy+ \e\int_{-H}^H\abs{u_\e^{(1)}\,'}^2\,dy+
             \int_{-H}^H \left((u_\e^{(1)})^4+(u_\e^{(2)})^4\right)\,dy\\ &&+  2\int_{-H}^H (u_\e^{(2)}\,')^2\,dy
             <C.
            \end{eqnarray*}
                        
Concluding the desired compactness of $\{u_\e^{(1)}\}$ relies on an algebraic identity. Using the $BV$ bound on $\{\psi_\e\}$, and passing to subsequences that we do not denote explicitly, we know that $\psi_\e$ converges in $L^1.$ We now show that $\{u_\e^{(1)}\}_{\e > 0}$ is a Cauchy sequence in $L^3.$ For any $0 < \e < \delta,$ we have 
            \[
                \frac{4}{3}\big( (u_\e^{(1)})^3 - (u_\delta^1)^3 \big)
                = \Big(\psi_\e - \psi_\delta \Big) -2(1 - |u_\e|^2) u_\e^{(1)}  +2 (1 - |u_\delta|^2)u_\delta^{(1)} .
                            \]  
            Hence, using Cauchy-Schwarz we obtain
            \begin{align*}
                \frac{4}{3}\int_{-H}^H \big| (u_\e^{(1)})^3 - (u_\delta^{(1)})^3 \big| \,dy\leq\int_{-H}^H |\psi_\e - \psi_\delta| \,dy+ \e^{1/3}\int_{-H}^H |u_\e^{(1)}|^2\,dy + \frac{1}{\e^{1/3}} \int_{-H}^H ( 1 - |u_\e|^2)^2\,dy \\ + \delta^{1/3} \int_{-H}^H |u_\delta^{(1)}|^2 \,dy+ \frac{1}{\delta^{1/3}} \int_{-H}^H(1 - |u_\delta|^2)^2\,dy.
            \end{align*}
            Since  $\{u_\e^{(1)}\}$ is uniformly bounded in $L^4$ by the energy bound, we can invoke the $L^1$ convergence of $\{\psi_\e\}$ to find that
            \begin{align}
                \int_{-H}^H \big|(u_\e^{(1)})^3 - (u_\delta^{(1)})^3\big|  \leqslant o(1),
            \end{align}
            as $\delta \to 0.$ Since $|a - b|^3 \leq 4|a^3 - b^3|$, it follows that $\{u_\e^{(1)}\}$ is Cauchy in $L^3,$ and has a limit in this space, denoted $u_1.$ Denoting the limit of $u_\e^{(2)}$ by $u_2$, it follows from the energy bound that $u_1^2 + u_2^2 = 1$ a.e. in $(-H,H)$. Consequently, the limit of the $\psi_\e$ satisfies 
            \begin{align} \label{eq:L1convpsieps}
                \psi_\e \to \frac{4}{3} (u_{1})^3
            \end{align}
in $L^1.$ By lower semicontinuity of the $BV$ norm under $L^1$ convergence, we conclude that \[(u_{1})^3 \in BV(-H,H).\] It follows that one-sided limits of $(u_{1})^3$ exist at all $y\in(-H,H)$. Combined with $u_{2}$ being continuous on the same interval, this implies that 
$\abs{u_{1},u_{2}}=1$ everywhere on $(-H,H)$.
\end{proof}

In light of the the preceding compactness result Theorem \ref{onedcompactness}, one can establish a full $\Gamma$-convergence result in this one-dimensional setting without an assumption on the limiting functions lying in $BV$. While of course the arguments presented for lower-semicontinuity and for the construction of recovery sequences in proving Theorem \ref{main} apply in this one-dimensional setting as well, we wish to give alternate proofs here since they are so much simpler and therefore make more transparent the key elements of the argument.

\bthm \label{thm1Dmain}
Let $u \in \sA^0.$ Then 
\noindent {\bf 1.} For any sequence $u_\e \in \sA^1(a)$ satisfying $u_\e \divcon u,$ we have,
\beq \label{1dlwb}
\liminf_{\e \to 0} \oned (u_\e) \geqslant \onedlim (u) .
\eeq
\noindent {\bf 2.} There exists a sequence $w_\e \in\sA^1(a)$ with $w_\e \divcon u$ and 
\begin{align} \label{1dupb}
\lim_{\e \to 0} \oned (w_\e) = \onedlim (u). 
\end{align}
\ethm
\begin{proof}
To prove (\ref{1dlwb}), let $u \in \sA^0, u_\e \in \sA^1(a)$ be given with $u_\e \divcon u.$ Let $J = \{a_1,a_2, \cdots \} \subset [-H,H]$ denote the at most countable jump set of $u_1^3,$ and hence that of $u_1.$ We note that $\lim_{y \to a_j^\pm } u_1(y)$ exists for each $a_j \in J$ and are denoted $u_1^\pm(a_j )$ respectively. Let now $\delta > 0$ be arbitrary. If $J$ is an infinite set, we suppose $N = N(\delta) $ is a positive integer such that 
\begin{align*}
 \sum_{j=N+1}^\infty \Big(1 - \big(u_2(a_j)\big)^2\Big)^{3/2} < \delta. 
\end{align*}
If $J$ is a finite set, we simply let $N$ denote the cardinality of $J$ instead. For each $j \in \{1,\cdots, N\},$ we set $I_j := (a_j - \tfrac{\delta}{N^{1+\alpha}}, a_j + \tfrac{\delta}{N^{1+\alpha}}),$ where $\alpha > 0$ is chosen such that the intervals $I_j$ are disjoint. Finally, as in the preceding theorem, we let $\psi_\e :(-H,H) \to \R$ be defined by the formula 
\begin{align}
\psi_\e(y) = 2 \big( u_\e^{(1)} - u_\e^{(1)} ( u_\e^{(2)})^2 - \frac{1}{3} (u_\e^{(1)})^3 \big). 
\end{align}
By the Cauchy-Schwarz inequality, we have 
\begin{align*}
|\psi_\e^\prime| \leqslant \frac{1}{\e} \big( 1 - |u_\e|^2 \big)^2 + \e \big| {u_\e^{(1)}}^\prime\big|^2 + L \big|{u_\e^{(2)}}^\prime\big|^2 + \frac{1}{L} \big(u_\e^{(1)}u_\e^{(2)}\big)^2
\end{align*}
With an eye towards proving (\ref{1dlwb}), we estimate,
\begin{align} \notag
\oned(u_\e) &\geqslant \frac{1}{2}\int_{(-H,H) \backslash \cup_{j=1}^N I_j} L \big|(u_\e^{(2)})^\prime\big|^2 \,dy + \frac{1}{2}\sum_{j=1}^N  \int_{I_j} \e |u_\e^\prime|^2 + \frac{1}{\e}\big( |u_\e|^2 - 1 \big)^2 + L\big|(u_\e^{(2)})^\prime\big|^2 \,dy \\ \label{lwbest1}
&\geqslant \frac{1}{2}\int_{(-H,H) \backslash \cup_{j=1}^N I_j} L \big|(u_\e^{(2)})^\prime\big|^2 \,dy + \frac{1}{2}\sum_{j=1}^N \int_{I_j} |\psi_\e^\prime(y)|\,dy - \frac{1}{2L} \int_{I_j} \big(u_\e^{(1)} u_\e^{(2)}\big)^2 \,dy.
\end{align}  
We claim that the last term 
\begin{align} \label{remainder1d}
\sum_{j=1}^N \int_{I_j} \big(u_\e^{(1)} u_\e^{(2)}\big)^2 \,dy \leqslant O(\delta). 
\end{align}
Assuming this for the time being, we first complete the proof. Taking $\liminf_{\e \to 0}$  on both sides of (\ref{lwbest1}), using the compactness gleaned from Thm. (\ref{onedcompactness}) and Eq. (\ref{eq:L1convpsieps}), we find for fixed $\delta > 0,$  
\begin{align} \label{1dlwblaststep}
\liminf_{\e \to 0} \oned(u_\e) \geqslant \frac{1}{2}\int_{(-H,H) \backslash \cup_{j=1}^N I_j} L \big|u_2^\prime\big|^2 \,dy + \frac{1}{2}\sum_{j=1}^N\frac{4}{3} \Big| u_1^3\big(a_j^+\big) - u_1^3\big(a_j^-\big)  \Big|^3 - O(\delta) - o_\delta(1)
\end{align}
where in the last line $|O(\delta)| \leqslant C\delta$ for a universal constant $C,$ and $o_\delta(1) \to 0$ as $\delta \to 0.$ We point out that we have absorbed the absolutely continuous part of $(\frac{4}{3} u_1^3)^\prime,$ (cf. Eq.(\ref{eq:L1convpsieps})) into the $o_\delta(1)$ term by the monotone convergence theorem, since $|\cup_{j=1}^N I_j| \sim \delta \to 0.$ 
Passing to the limit $\delta \to 0,$ once again invoking monotone convergence for the first term on the right hand side of (\ref{1dlwblaststep}), and using the observation that $u_1(a_j+) = - u_1(a_j-)$ for each $j,$ we conclude that 
\begin{align*}
\liminf_{\e \to 0} \oned(u_\e) \geqslant \frac{L}{2} \int_{-H}^H (u_2^\prime)^2 \,dy + \frac{4}{3}\sum_{a_j \in J} \Big( 1 - u_2^2(a_j)\Big)^{3/2} = \onedlim (u).
\end{align*}
To conclude the proof, we must prove the estimate (\ref{remainder1d}). Once again, using Cauchy-Schwarz, 
\begin{align*}
\int_{\bigcup_{j=1}^N I_j} \big(u_\e^{(1)} u_\e^{(2)}\big)^2 \,dy \lesssim \int_{\bigcup_{j=1}^N I_j}  ((u_\e^{1})^2 + (u_\e^{(2)})^2 - 1)^2 \,dy + \Big| \bigcup_{j=1}^N I_j\Big| \lesssim \e + N \cdot \frac{\delta}{N^{1+\alpha}} \lesssim O(\delta). 
\end{align*}
This completes the proof of the remainder estimate (\ref{remainder1d}) and therefore the proof of the lower bound (\ref{1dlwb}). 

\medskip\noindent To prove (\ref{1dupb}), let $u=(u_1,u_2)\in\sA^0$ so that $\onedlim(u)<\infty$ and denote the jump set of $u_1$ by $J_{u_1}$. We first observe that since $u_2\in H^1(-H,H)$, it is H\"older continuous on $[-H,H]$ for any H\"older exponent $\alpha\in (0,1/2)$. Since also $\abs{u(y)}=1$ at a.e. $y\in (-H,H)$ we may assume, after perhaps redefining $u_1$ on a set of measure zero, that $u_1$ is H\"older continuous with exponent $\alpha/2$ off of its jump set with
\beq
 \abs{u_1(y)}=\sqrt{1-u_2^2(y)}\quad\mbox{at every}\; y\in (-H,H)\setminus J_{u_1}.\label{u1u2}
 \eeq

Now we fix any sequence  $\{\delta_k\}$ approaching $0$ as $k\to\infty$.  Then we introduce the subset $J_{u_1}^k$of the jump set $J_{u_{1}}$ given by $J_{u_1}^k:=\left\{y\in J_{u_1}:\,\big|[u_1(y)]\big|> 2\delta_k\right\}$. We note that for each fixed $k$ there are only finitely many points $\{y_j^k\}_{j=1}^{N_k}$ in $J_{u_1}^k$ since otherwise the term $\sum_{y_j \in J_{u_1}} \big(1 - u_2^2(y_j) \big)^{3/2}$ in 
$\onedlim(u)$ would be infinite.

For each fixed $k$ we will construct a family of competitors $\{w^{k,\e}\}$ in $\oned$ such that 
\beq
\lim_{k\to \infty}\lim_{\e\to 0}\oned(w^{k,\e})= \onedlim(u),\label{finally}
\eeq
and \eqref{1dupb} will follow by a diagonalization argument. 

On the set $\left\{y:\,\abs{u_2(y)}\leq \sqrt{1-\delta_k^2}\right\}$ we will leave $u_2$ unchanged. We note that in light of \eqref{u1u2},
for any $y\in \left\{y:\,\abs{u_2(y)}\leq \sqrt{1-\delta_k^2}\right\}\setminus J_{u_1}$ one has
\beq
\abs{u_1'(y)}=\frac{\abs{u_2(y)}\abs{u_2'(y)}}{\sqrt{1-u_2(y)^2}}\leq \frac{1}{\delta_k}\abs{u_2'(y)},\label{gH1}
\eeq
and so, in particular, $u_1$ lies in $H^1$ on any intervals in this set.

Now for any $y_j^k\in J_{u_1}^k$ we replace $u_1$ in the interval $[y_j^k-2\e^{5/6},y_j^k+2\e^{5/6}]$ by a standard `Modica-Mortola' type heteroclinic connection and linear interpolation. (In fact, using any $\e^p$ for $4/5<p<1$ works but we will use $p=5/6$ for concreteness.) That is, we define the scalar function $h_\e=h_\e(y;y_j^k)$ for $y\in(-\infty,\infty)$ that bridges the
values $\pm\sqrt{1-u_2(y_j^k)^2}$ at $\pm\infty$ or $\mp\infty$, depending on the sign of $[u_1(y_j^k)]$, via
\[
h_\e(y;y_j^k):=sgn([u_1(y_j^k)])\sqrt{1-u_2(y_j^k)^2}\tanh\bigg(\sqrt{1-u_2(y_j^k)^2}\,\frac{(y-y_j^k)}{\e}\bigg)
\]
and then define the recovery sequence $w^{k,\e}$  as
\[
w^{k,\e}(y)=\big(h_\e(y;y_j^k),u_2(y)\big)\quad\mbox{on}\;[y_j^k-\e^{5/6},y_j^k+\e^{5/6}],
\]
and as $w^{k,\e}(y)=\big(\ell_\e(y;y_j^k),u_2(y)\big)$ on  $\{y:\,\e^{5/6}\leq \abs{y-y_j^k}\leq 2\e^{5/6}\}$ where $\ell_\e(y;y_j^k)$ is a linear interpolation between the values $\pm h_\e(y_j^k\pm \e^{5/6};y_j^k)$ and
the value $u_1(y_j^k\pm2\e^{5/6}).$ We observe that $\abs{h_\e(y_j^k\pm \e^{5/6};y_j^k)}$ is exponentially close to $\sqrt{1-u_2(y_j^k)^2}$ and that $\sqrt{1-u_2(y_j^k)^2}=\abs{u_1(y_j^k)^+}=\abs{u_1(y_j^k)^-}.$ 

A standard calculation yields that 
\[
\frac{1}{2}\lim_{\e\to 0}\int_{y_j^k-\e^{5/6}}^{y_j^k+\e^{5/6}}\e |(\frac{d}{dy} h_\e(y;y_j^k))|^2 + 
\frac{1}{\e} (\abs{\big(h_\e(y;y_j^k),u_2(y_j^k)\big)}^2 - 1)^2 \,dy=
\frac{4}{3}\Big( 1 - u_2^2(y_j^k)\Big)^{3/2}
\]
and so it will follow that
\beq
\frac{1}{2}\lim_{\e\to 0}\int_{y_j^k-\e^{5/6}}^{y_j^k+\e^{5/6}}\e |(w^{k,\e})'(y)|^2 + 
\frac{1}{\e} (\abs{w^{k,\e}}^2 - 1)^2 \,dy=
\frac{4}{3}\Big( 1 - u_2^2(y_j^k)\Big)^{3/2}\label{stepa}
\eeq
as well, once we verify that
\[
\lim_{\e\to 0}\frac{1}{\e}\int_{y_j^k-\e^{5/6}}^{y_j^k+\e^{5/6}} 
 \abs{\left(\abs{\big(h_\e(y;y_j^k),u_2(y_j^k)\big)}^2 - 1\right)^2-\left(\abs{\big(h_\e(y;y_j^k),u_2(y)\big)}^2 - 1\right)^2} \,dy=0.
\]
However, the H\"older bound $\norm{u_2}_{C^{0,\alpha}(-H,H)}<C\norm{u_2}_{H^1(-H,H)} $ invoked for any $\alpha\in (1/3,1/2)$ implies that  
\begin{eqnarray}
&&
\lim_{\e\to 0}\frac{1}{\e}\int_{y_j^k-\e^{5/6}}^{y_j^k+\e^{5/6}} 
 \abs{\left(\abs{\big(h_\e(y;y_j^k),u_2(y_j^k)\big)}^2 - 1\right)^2-\left(\abs{\big(h_\e(y;y_j^k),u_2(y)\big)}^2 - 1\right)^2} \,dy\leq \nonumber\\
&&
\lim_{\e\to 0}\frac{1}{\e}\int_{y_j-\e^{5/6}}^{y_j^k+\e^{5/6}} 
C\abs{u_2(y)-u_2(y_j^k)}\,dy\leq
\lim_{\e\to 0}\frac{1}{\e}\int_{y_j^k-\e^{5/6}}^{y_j^k+\e^{5/6}}  C \abs{y-y_j^k}^{\alpha}=0.\label{potengood1}
\end{eqnarray}
Next we argue that the asymptotic contribution of this construction to the total energy $\oned$ is zero on the interpolating intervals 
$\cup_{y_j^k\in  J_{u_1}^k}\left\{y:\,\e^{5/6}\leq \abs{y-y_j^k}\leq 2\e^{5/6}\right\}$. In light of the exponential approach of $\ell$ to the endstates $\pm\abs{u_1(y_j^k)^+}$ and the H\"older continuity of $u_1$ with exponent $\alpha/2$ for any $\alpha<1/2$  away from the jump set, cf. \eqref{u1u2}, we conclude that
\[
\abs{\frac{d\ell_\e}{dy}}=\frac{| h_\e(y_j^k\pm \e^{5/6};y_j^k)- u_1(y_j^k\pm2\e^{5/6})|}{\e^{5/6}}
\sim \frac{| u_1(y_j^k)- u_1(y_j^k\pm2\e^{5/6})|}{\e^{5/6}}
 \leq C\e^{-5/8-\sigma}\quad\mbox{for any}\;\sigma>0,
\]
where $u_1(y_j^k)$ above refers to the traces $u_1(y_j^k)^+$ or $u_1(y_j^k)^-$ as appropriate.
From this it easily follows that 
\beq
\e\left\{\int_{y_j^k-2\e^{5/6}}^{y_j^k-\e^{5/6}} +\int_{y_j^k+\e^{5/6}}^{y_j^k+2\e^{5/6}} \right\}\abs{\frac{d\ell_\e}{dy}}^2\,dy\to 0\;\mbox{as}\;\e\to 0.\label{interpder}
\eeq
Then in a manner similar to \eqref{potengood1} we can estimate the energetic cost of the potential term in the interpolation zone as
\begin{multline}
\frac{1}{\e}\left\{\int_{y_j^k-2\e^{5/6}}^{y_j^k-\e^{5/6}} +\int_{y_j^k+\e^{5/6}}^{y_j^k+2\e^{5/6}} \right\}
\big(\ell_\e^2+u_2^2-1\big)^2\,dy\\ \leq
 \frac{C}{\e}\left\{\int_{y_j^k-2\e^{5/6}}^{y_j^k-\e^{5/6}} +\int_{y_j^k+\e^{5/6}}^{y_j^k+2\e^{5/6}} \right\}\abs{\ell_\e(y;y_j^k)-u_1(y)}\,dy
\\
\leq \frac{C}{\e}\left\{\int_{y_j^k-2\e^{5/6}}^{y_j^k-\e^{5/6}} +\int_{y_j^k+\e^{5/6}}^{y_j^k+2\e^{5/6}} \right\}\abs{u_1(y_j^k)-u_1(y)}\,dy
 \\ \leq\frac{C}{\e}\left\{\int_{y_j^k-2\e^{5/6}}^{y_j^k-\e^{5/6}} +\int_{y_j^k+\e^{5/6}}^{y_j^k+2\e^{5/6}} \right\}\abs{y-y_j^k}^{\alpha/2}\,dy \leq C\e^{\frac{1}{24}-\sigma}\to 0\;\mbox{as}\;\e\to 0,\label{nopot}
\end{multline}
by choosing $\alpha$ sufficiently close to $1/2$ and hence $\sigma$ sufficiently close to $0.$

On the set of $y$-values
\[
(-H,H)\setminus \left( \cup_{y_j^k\in  J_{u_1}^k}\left\{y:\,\abs{y-y_j^k}\leq 2\e^{5/6}\right\} \cup \left\{y:\,\abs{u_2(y)}> \sqrt{1-\delta_k^2}\right\}\right)
\]
we will leave $u$ unchanged, letting $w^{k,\e}\equiv u$, and appeal to \eqref{gH1} along with \eqref{stepa} to conclude that
\begin{eqnarray}
&&\lim_{\e\to 0}\frac{1}{2}\int_{\{y:\,\abs{u_2(y)}\leq \sqrt{1-\delta_k^2}\}} \e |(w^{k,\e})^\prime|^2 + \frac{1}{\e} (|w^{k,\e}|^2 - 1)^2 + L ( u_2^\prime)^2 \,dy=\nonumber\\
&&\frac{L}{2}\int_{\{y:\,\abs{u_2(y)}\leq \sqrt{1-\delta_k^2}\}}   ( u_2^\prime)^2 \,dy+\sum_{y_j^k\in J_{u_1}^k}\frac{4}{3}\Big( 1 - u_2^2(y_j^k)\Big)^{3/2}.\label{stepb}
\end{eqnarray}
It remains to define the recovery sequence on the set $\left\{y:\,\abs{u_2(y)}> \sqrt{1-\delta_k^2}\right\}$ which is of course equivalent to
$\{y:\,\abs{u_1(y)}<\delta_k\}$ and so includes, in particular, all elements of $J_{u_1}\setminus J_{u_1}^k.$ We note that the measure of the set $\{y:\,0<\abs{u_1(y)}<\delta_k\}$ must approach zero as $\delta_k\to 0$ and on any intervals where the function $u_1$ vanishes (so that necessarily either $u_2\equiv 1$ or $u_2\equiv -1$), we leave $u$ unchanged. 

Writing the open set $\{y:\,\abs{u_1(y)}<\delta_k\}$ as a countable union of disjoint open intervals $\{(a_j^k,b_j^k)\}$ in $(-H,H)$ we note that on any one of these intervals,
say $(a_j^k,b_j^k)$, one must have either\\
 (i) $u_1(a_j^k)=u_1(b_j^k)=\pm\delta_k$, or else \\ (ii) $u_1(a_j^k)=-u_1(b_j^k)=\pm\delta_k$.\\
 
In light of the H\"older condition on $u_2$ it follows that in either scenario (i) or (ii), for each interval $(a_j^k,b_j^k)$ necessarily
$u_2(a_j^k)=u_2(b_j^k)=\pm\sqrt{1-\delta_k^2}$ and so we will take the second component of the recovery sequence
$w^{k,\e}_2$ to be the constant,
\beq \pm\sqrt{1-\delta_k^2}\quad \mbox{on}\; (a_j^k,b_j^k).\label{w2const}
 \eeq
 In scenario (i) we may also take the first component of $w^{k,\e}$ to be constant so that on all intervals $(a_j^k,b_j^k)$ coming from scenario (i) we choose $w^{k,\e}\equiv (\pm\delta,
 \pm\sqrt{1-\delta_k^2})$ with the signs taken appropriately so as to ensure continuity with the region outside $(a_j^k,b_j^k).$ Of course, the contribution to the total energy $\oned$ is zero from this constant $S^1$-valued part of the construction.

It remains to define the first component of  $w^{k,\e}$ on intervals coming from scenario (ii). To this end, we first note that for each $k$, the number of $j$-values where (ii) occurs, say $N_k$, is finite since otherwise $u_1^3$ would have infinite total variation. We also note that since $u_1^3\in BV(-H,H)$ then one has
\[
\lim_{\delta_k\to 0}\int_{\{y:\abs{u_1(y)}<\delta_k\}}\abs{(u_1^3)'}=0,
\]
and so it follows that the total variation coming from intervals of type (ii) is asymptotically zero. In particular we then have
\beq
\lim_{k\to\infty}N_k\delta_k^3= 0.\label{keyest}
\eeq

Now fix any interval $(a_j^k,b_j^k)$ on which $u_1$ satisfies scenario (ii) and for example suppose $u_1(b_j^k)=+\delta_k$. Then letting $c_j^k$ denote the midpoint of this interval, we define the first component of $w^{k,\e}$ via the formula
\[
w^{k,\e}_1(y)=\left\{\begin{matrix}
\delta_k\tanh\bigg(\delta_k\,\frac{(y-c_j^k)}{\e}\bigg)&\quad\mbox{for}\;\abs{y-c_j^k}<\e^{5/6}\\
\mbox{linear interpolation}&\quad\mbox{for}\;\e^{5/6}\leq\abs{y-c_j^k}\leq 2\e^{5/6}\\
-\delta_k&\quad\mbox{for}\;a_j^k\leq y<c_j^k-2\e^{5/6}\\
\delta_k&\quad\mbox{for}\;c_j^k+2\e^{5/6}<y\leq b_j^k
\end{matrix}\right.
\]
We are using here the finiteness of $N_k$ to allow that for $\e$ sufficiently small, such a construction fits inside the interval $(a_j^k,b_j^k)$.

As in the derivation of \eqref{interpder} and \eqref{nopot} we find that the contribution to $\oned$ in the interpolation zone is asymptotically zero and analogous to
\eqref{stepb}, one has
\[
\lim_{\e\to 0} \frac{1}{2}\int_{\{y:\,\abs{u_2(y)}> \sqrt{1-\delta_k^2}\}} \e |(w^{k,\e})^\prime|^2 + \frac{1}{\e} (|w^{k,\e}|^2 - 1)^2 + L ( u_2^\prime)^2 \,dy=\sum_1^{N_k}\frac{4}{3}\Big( 1 - w^{k,\e}_2(c_j^k)^2\Big)^{3/2}=\frac{4}{3}N_k\delta_k^3
\]
through the use of \eqref{w2const}. Hence, invoking \eqref{keyest} we see that in taking the double limit $\lim_{k\to\infty}\lim_{\e\to 0}$, there is no contribution to the energy $\oned(w^{k,\e})$ coming from the region $\left\{y:\,\abs{u_2(y)}> \sqrt{1-\delta_k^2}\right\}$.

Finally taking the limit $\delta_k\to 0$ in \eqref{stepb}, we obtain \eqref{finally}.
\end{proof}

\medskip
\brk
The proof of the $\Gamma-$convergence result in the one-dimensional setting as shown above echoes a number of the salient features of the proof of the full two-dimensional result presented in Theorem \ref{main} while suppressing other technicalities. In the proof of the lower bound, the quantity $\psi_\e$ introduced in the proof of Theorem \ref{thm1Dmain} is precisely the first component of the vector field $\Xi(u_\e^\perp)^\perp,$ where $\Xi$ is the Jin-Kohn vector field introduced in the proof of Theorem \ref{main}. 

Comparing the two-dimensional and one-dimensional constructions of recovery sequences, we first note that our one-dimensional recovery sequence construction proceeded as a double limit, similar to, though of course, simpler than the quadruple limit used in proof of Theorem \ref{main}, cf. equation \eqref{eq:domaindecomp} for instance. Aside from the $\e-$limit which is of course common to both proofs, the jumps that are at least $\delta_k,$ as in equation \eqref{gH1} above represent the analog of ``good jump points'' in the two-dimensional proof, cf. \eqref{g1}. Finally, the intermediate scales $k\e$ used in the proof of the Theorem \ref{main} are now replaced by appropriate powers of $\e.$ 
\erk

\medskip
\brk \label{rmknotbv}
We recall that in Theorem \ref{main} we made the assumption $u \in BV.$ That this is not quite the optimal space can already be seen in this simpler one-dimensional setting where one can construct a limiting vector field $u = (u_1,u_2)$ with $u_1$ having a countable collection of jumps of size $(\frac{1}{k})_{k \in \mathbb{N}}.$ Such a construction can be arranged to have finite $E_0$ energy, but necessarily has infinite $BV$ norm. The preceding theorem, however, guarantees the existence of a recovery sequence for such a competitor. 

This phenomenon is well-known for Aviles-Giga, see the discussion on \cite[pg. 338-340]{ADM}. The counter-example there is very similar in spirit, but is understandably a bit more involved due to the constraint imposed by the eikonal equation. 
\erk

Next we pursue an understanding of minimizers of the one-dimensional $\Gamma$-limit $\onedlim$.

\medskip
\bthm \label{1dmain} For any $a\in (0,1)$ the problem
\[
\inf_{\sA^0} \onedlim({ u})
\]
has a unique solution ${ u^*}=(u^*_1,u^*_2)$ where $u^*_1$ has exactly one jump located at $y=0$ and $u^*_2$ is linear on the subintervals $[-H,0]$ and $[0,H]$. More precisely, the components are given by the formulas:
\beq
u^*_2(y)=\left\{
\begin{array}{ll}
a+\frac{M-a}{H}(y+H),  &  y\in(-H,0],   \\
a+\frac{M-a}{H}(H-y), & y\in (0,H),    
\end{array}
\right.\label{bestcomp2}
\eeq
\beq  u^*_1(y)=
\left\{
\begin{array}{ll}
-\sqrt{1- (u^*_2)^2} & \mbox{for}\; y\in[-H,0],   \\
  \sqrt{1-(u^*_2)^2} &  \mbox{for}\; y\in(0,H],
\end{array}
\right.\label{bestcomp1}
\eeq
where the constant $M=M(L,H,a)\in (a,1)$ is the minimizer of the problem
\beq
\min_{m\in [-1,1]}\frac{L}{H}(m-a)^2+\frac{4}{3}(1-m^2)^{3/2}.\label{bestm}
\eeq
In case $a=0$, the nature of the minimizer depends on the ratio $L/H$. If $L/H< 2$, then the minimizer is again unique and has the one-jump structure given by  \eqref{bestcomp2}-\eqref{bestcomp1} and the infimum is $\frac{L}{H}-\frac{1}{12}\frac{L^3}{H^3}$. If $L/H> 2$ then the minimizer is any step function of the form
\[
{ u}(y)=\left\{
\begin{array}{ll}
(-1,0) &\; \mbox{for}\; y\in(-H,y^*],   \\
 (1,0) &  \;\mbox{for}\; y\in(y^*,H),
\end{array}
\right.
\]
where $y*\in [-H,H]$ is arbitrary and the infimum is $4/3$. If $L/H=2$ the family of step functions and the solution given by \eqref{bestcomp2}-\eqref{bestcomp1} are all minimizers.
\ethm
\begin{proof}
 Let ${ u}=(u_1,u_2)$ be any competitor in $\sA^0$. We denote by $J_{ u}$ the jump set of ${u},$ which in the present one-dimensional setting corresponds simply to the jump set of $u_1$, combined with either $-H$ or $H$ or both if either 
 $u_1(-H)\not=-\sqrt{1-a^2}$ or $u_1(H)\not=\sqrt{1-a^2}$. We will write $\bar{J}_{ u}$ for the closure of $J_{ u}$ and
we define the number $M_{ u}$ via
\[
M_{ u}:=\left\{
\begin{array}{ll}
\max_{y\in\bar{J}_{ u}}u_2(y) & \mbox{if}\; \bar{J}_{{ u}}\not=\emptyset,   \\
\max_{y\in [-H,H]}u_2(y) &  \mbox{if}\; \bar{J}_{{ u}}=\emptyset. 
\end{array}
\right.
\]
In light of the continuity of $u_2$ and the compactness of $\bar{J}_{ u}$ we note that this maximum will always be achieved at
at least one point $\bar y\in [-H,H]$. We now proceed in three cases.\\
\noindent {\bf Case 1.} $\bar{J}_{ u}\not=\emptyset$ and $M_{ u}$ is achieved at $\bar y\in (-H,H).$\\
We note that this case includes the possibility that $\bar y\not\in J_{ u}$ but is simply a limit point of a sequence of points $\{y_j\}$ in the jump set.
In this case $\abs{{ u}_-(y_j)-{ u}_+(y_j)}\to 0$, meaning that the difference between the left and right traces of $u_1$ approaches zero.
Since these traces are also opposites of each other, necessarily $u_1(\bar y)=0$, forcing $u_2(\bar y)=1=M_{ u}$.

Whether or not this subcase of Case 1 occurs, we now consider the competitor ${ \bar{u}}=(\bar{u}_1,\bar{u}_2)$ whose second component is given by 
\beq\bar u_2=
\left\{
\begin{array}{ll}
a+\frac{M_{{ u}}-a}{\bar y+H}(y+H) & \mbox{for}\; y\in[-H,\bar y],   \\
a+\frac{M_{{ u}}-a}{\bar y-H}(y-H) &  \mbox{for}\; y\in(\bar y,H],    
\end{array}
\right.\label{comp2}
\eeq
and whose first component is given by 
\beq \bar u_1=
\left\{
\begin{array}{ll}
-\sqrt{1-{\bar u}_2^2} & \mbox{for}\; y\in[-H,\bar y],   \\
  \sqrt{1-{\bar u}_2^2} &  \mbox{for}\; y\in(\bar y,H].
\end{array}
\right.\label{comp1}
\eeq
We calculate that
\begin{multline}
\onedlim({ u})\geq\frac{L}{2}\int_{-H}^{\bar y} (u_2^\prime)^2 \,dy+\frac{L}{2}\int_{\bar y}^H (u_2^\prime)^2 \,dy+\frac{4}{3}
(1-M_{ u}^2)^{3/2}\\
\geq \frac{L}{2(\bar y+H)}\left(\int_{-H}^{\bar y}u_2^\prime\,dy\right)^2+\frac{L}{2(H-\bar y)}\left(\int_{\bar y}^H u_2^\prime\,dy\right)^2
+\frac{4}{3}
(1-M_{ u}^2)^{3/2}\\
 = \frac{L(M_{{ u}}-a)^2}{2(\bar y+H)}+\frac{L(M_{{ u}}-a)^2}{2(H-\bar y)}+\frac{4}{3}
(1-M_{ u}^2)^{3/2}\\=\frac{L}{2}\int_{-H}^{\bar y} ({\bar u}_2^\prime)^2 \,dy+\frac{L}{2}\int_{\bar y}^H ({\bar u}_2^\prime)^2 \,dy
+\frac{4}{3}
(1-{\bar u}_2({\bar y})^2)^{3/2}
=\onedlim (\bar u),
\label{CS}
\end{multline}
by the Cauchy-Schwarz inequality, with the inequality being strict unless $u_2$ is linear on the subintervals $(-H,\bar y)$ and $(\bar y,H)$.
Furthermore, among competitors of the form \eqref{comp2}-\eqref{comp1}, the second to last line of \eqref{CS} reveals that the optimal choice is to have $\bar y=0$ yielding a minimal energy within this class of competitors of the form
\beq
\onedlim(\bar u)=\frac{L}{H}(M_{ u}-a)^2+\frac{4}{3}(1-M_{ u}^2)^{3/2}.\label{bestjump}
\eeq
\\
\noindent
{\bf Case 2.} Suppose $\bar{J}_{ u}=\emptyset.$\\
In this case $u_1$ is continuous with $u_1(\pm H)=\pm\sqrt{1-a^2}.$ Hence there exists a point $ y\in (-H,H)$ such that $u_1(y)=0$,
meaning that $u_2(y)=1$. Therefore in this case, $M_{ u}=u_2(\bar y)=1$ for some $\bar y\in (-H,H).$ 
 Then consider the competitor ${ \bar{u}}=(\bar{u}_1,\bar{u}_2)$ given by \eqref{comp2}--\eqref{comp1} with $M_{ u}=1$ so that
 now $u_1$ is continuous as well. The calculation leading to \eqref{CS}, absent the jump term, implies in this case that
 \[
\onedlim({ u})\geq \onedlim( {\bar{ u}})
\]
with the minimal value 
\beq
\onedlim({ \bar{u}})=\frac{L}{H}(1-a)^2.\label{maxofone}
\eeq
\noindent {\bf Case 3.} Suppose $\bar{J}_{ u}\not=\emptyset$ and either $M_{ u}=u_2(-H)$ or $M_{ u}=u_2(H)$.\\
 In the first case, we have
 \beq\sqrt{1-a^2}
 \onedlim({ u})\geq \frac{1}{6}(u_1(-H)+\sqrt{1-a^2})^3=\frac{4}{3}(1-a^2)^{3/2}=\onedlim({\bar u})\label{left}
 \eeq
 where ${\bar u}\equiv (\sqrt{1-a^2},a)$, while in the second case we have
  \beq
 \onedlim({ u})\geq \frac{1}{6}(u_1(H)-\sqrt{1-a^2})^3=\frac{4}{3}(1-a^2)^{3/2}=\onedlim({\bar u})\label{right}
 \eeq
 where ${\bar u}\equiv (-\sqrt{1-a^2},a)$. Again the inequalities are sharp unless ${ u}\equiv { \bar{u}}.$
 
 Having exhausted all possibilities, we next observe that the optimal formula \eqref{maxofone} from Case 2 corresponds to \eqref{bestjump} with $M_{ u}=1$ and
 the optimal formulas \eqref{left} and \eqref{right} from Case 3 correspond to \eqref{bestjump} with $M_{ u}=a$. Hence, the minimal energy corresponds to the minimization \eqref{bestm}. Clearly this minimum must occur for $m\in [0,1]$ and since for $a\in (0,1)$, the function
 \[
 f(m):=\frac{L}{H}(m-a)^2+\frac{4}{3}(1-m^2)^{3/2}
 \]
 satisfies the conditions $f'(0)<0$ and $f'(1)>0$, the minimum occurs on $(0,1)$. The conclusion of the theorem for this case then follows. When $a=0$ one finds that $f'(0)=0$ and some elementary calculus yields the stated dichotomy depending on the ratio $L/H.$ When $a \neq 0,$ it can be checked by elementary arguments that the interior minimum is unique. 
\end{proof}
\brk \label{rkcrosstie}
The proof of Theorem \ref{main} reveals that resolving the internal structure of walls for the $E_0$ energy at the $\e > 0$ level using a one-dimensional construction is asymptotically optimal. However, it is possible to also have two dimensional recovery sequences with the same energy asymptotics. To see this, set $S := \{|x| < 1/2\}$ and define the map $u:\R^2 \to \R^2$ which is $1-$ periodic in the $x-$direction by
\begin{align*}
u(x,y) = u(r \cos \theta, r \sin \theta) := \left\{
\begin{array}{cc}
\big( \frac{1}{\sqrt{2}}, - \frac{1}{\sqrt{2}}\big) & S \cap \{ 0 \leqslant \theta \leqslant \frac{\pi}{4} \} \\
(\sin \theta, - \cos \theta) & S \cap \{\frac{\pi}{4} \leqslant \theta \leqslant \frac{3\pi}{4}\} \\
\big(\frac{1}{\sqrt{2}}, \frac{1}{\sqrt{2}} \big) & S \cap \{ \frac{3\pi}{4} \leqslant \theta \leqslant \pi\}  \\
\big( - \frac{1}{\sqrt{2}}, \frac{1}{\sqrt{2}} \big) & S \cap \{ \pi \leqslant \theta \leqslant \frac{5\pi}{4}\} \\
(\sin \theta, - \cos \theta) & S \cap \{ \frac{5 \pi}{4} \leqslant \theta \leqslant \frac{7\pi}{4} \} \\
\big( - \frac{1}{\sqrt{2}} , - \frac{1}{\sqrt{2}} \big) & S \cap \{ \frac{7\pi}{4} \leqslant \theta < 2 \pi\}.
\end{array}
\right. 
\end{align*}
 extended to all of $\R$ by $u(x+1,y ) = u(x,y)$ for all $x \in \R.$ We compute the $E_0$ energy per unit length of the cross-tie map $u$, which is divergence free. Across the walls $\{|x| \leqslant 1/2, y = 0\},$ the jump angle is $\pi/4.$ Similarly, along the walls $\{|y| \leq 1/2, x = 1/2\}$ the jump angle is $\pi/4.$ Finally there are walls $\{|y| > 1/2, x = 1/2\},$ along which the angle varies with $y,$ and is in fact equal to $\arctan \big( \frac{1}{2y} \big)$ at height $y.$ Adding up these various jump energies yields the energy per unit length,  
\begin{align*}
E_0(u;S)&= \frac{4}{3} \left[2 \left( \frac{1}{\sqrt{2}}\right)^3 + 2\int_{1/2}^\infty \frac{1}{(1 + 4y^2)^{3/2}} \,dy \right] \\
&= \frac{4}{3}\left[ \frac{1}{\sqrt{2}} + 1 - \frac{1}{\sqrt{2}}\right] = \frac{4}{3}. 
\end{align*}
This construction can be blown down to fit into walls replacing a heteroclinic connecting $(1,0)$ and $(-1,0)$.
This observation is reported without details given in \cite{KohnICM} based on private communication with S. Serfaty. 
\erk
\subsection{A two-dimensional construction with cross-ties}
In this section we construct a critical point to $E_0$ by solving the free boundary problem \eqref{el:5}-\eqref{el:13}. Here our particular interest is to find parameter regimes within which the one-dimensional minimizer from Theorem \ref{1dmain} fails to minimize the full two-dimensional problem \eqref{Ezero}.  
%
The main result of this section is 
\bthm \label{not1d}
Consider the minimization problem for $E_0$ in the rectangle $\Omega = (-T,T) \times (-H, H)$, subject to the boundary conditions \eqref{rectbc} with $a = 0.$  There exist constants $L_0 \approx 1.27$ and $L_1 \approx 2.14$ such that whenever $L/H \in (L_0,L_1)$ and $T = H\tilde T(L/H)$ where $\tilde T(L/H)$ solves \eqref{eq:TDLN}, we have
\beq
\inf E_0(u)<2T\,\inf_{\sA^0} E_0^{1D}(u).\label{2beats1}
\eeq
Here the infimum on the left is taken over all $u\in \hdivS \cap BV(\Omega;\mathbb{S}^1)$ such that $u\cdot \nu=0$ on the top and bottom $y=\pm H$ and $u$ is $2T$-periodic in $x$.
\ethm
\begin{rmrk} \label{scaleinvariance}
Given the energy functional \eqref{Ezero} and the rectangular domain $\Omega$ in the statement of Theorem \ref{not1d}, it is easy to see that by setting
\[\tilde x=\frac{x}{H},\ \tilde y=\frac{y}{H},\ \tilde E_0=\frac{E_0}{H},\]
the rescaled variational problem for $\tilde E_0$ contains two independent parameters: the aspect ratio $\tilde T=T/H$ and the scaled elastic constant $L/H$. Then setting $\tilde u(\tilde x,\tilde y)=u\left(H\tilde x,H\tilde y\right)$ for any admissible $u\in \hdivS \cap BV(\Omega;\mathbb{S}^1)$, assuming that $\tilde T=\tilde T(L/H)$ and writing explicitly the dependence of the energy on $L$ and $H$, we find that
\begin{equation}
\label{eq:scales}
\frac{1}{2T}E_0(u,L,H)=\frac{1}{2\tilde T}\tilde E_0(\tilde u,L/H).
\end{equation}
In other words, the energy per unit length along the $x$-axis is a function of the scaled elastic constant $L/H$ only.
\end{rmrk}
The proof of Theorem \ref{not1d} relies on a construction of a two-dimensional critical point of $E_0$ that resembles cross-tie walls well-known in the studies of micromagnetics (\cite{Ignat,ARS}; see also Remark \ref{rkcrosstie}). Our construction is motivated by the numerics which we will now describe. 

To find two-dimensional critical points of $E_0,$ we used the finite elements software COMSOL\textsuperscript{\textregistered} \cite{COMSOL} to determine the solutions of the Euler-Lagrange equation for $E_\varepsilon$ numerically. Here the (local) minimizers were found by simulating the gradient flow for $E_\varepsilon$ on time intervals that were sufficiently large for a solution to reach an equilibrium. 

In our numerics, we fixed $H=1/2$ and allowed $L$ to vary. Then, for a given $L > 0$, we determined $T=\tilde T(2L)/2$ by solving the equation \eqref{eq:TDLN}. The reason for this choice of $T$ will be explained below. The Euler-Lagrange equation for $E_\varepsilon$ was then solved on the rectangle $(-T,T)\times(-1/2,1/2)$, subject to periodic boundary conditions on $\left\{-T,T\right\}\times[-1/2,1/2]$ and assuming that $u(\cdot,\pm 1/2)=(\pm 1,0)$. 

Our numerical studies allowed us to identify three different regimes. When $L$ is small, the one-dimensional solution (not shown) is recovered as the result of simulations. For intermediate values of $L$, a single-wall cross-tie configuration appears (Figs.~\ref{f:ct-1}-\ref{f:ct-3}). An analytical solution corresponding to this configuration will be constructed below using the conservation laws approach of Corollary \ref{conservation}. To this end, we observe that this configuration (i) has both vertical and horizontal jump sets coinciding with the coordinate axes as well as a pair of defects of degrees $\pm1$ at $(0,T)$ and $(0,0)$, respectively; (ii) the solution is symmetric with respect to reflections about the coordinate axes and the divergence is antisymmetric with respect to these reflections; and (iii) the level curves for divergence in the first quadrant can be distinguished into three different regions as in Fig.~\ref{fig:BLAH}.

We conjecture that this configuration corresponds to the cross-tie construction that we develop in this section. Indeed, when the solution resulting from this construction is plotted (Figs.~\ref{f:ctm-1}-\ref{f:ctm-2}), it closely resembles those in Figs.~\ref{f:ct-2}-\ref{f:ct-3}. 
\begin{figure}[H]
\centering
    \includegraphics[width=3in]
                    {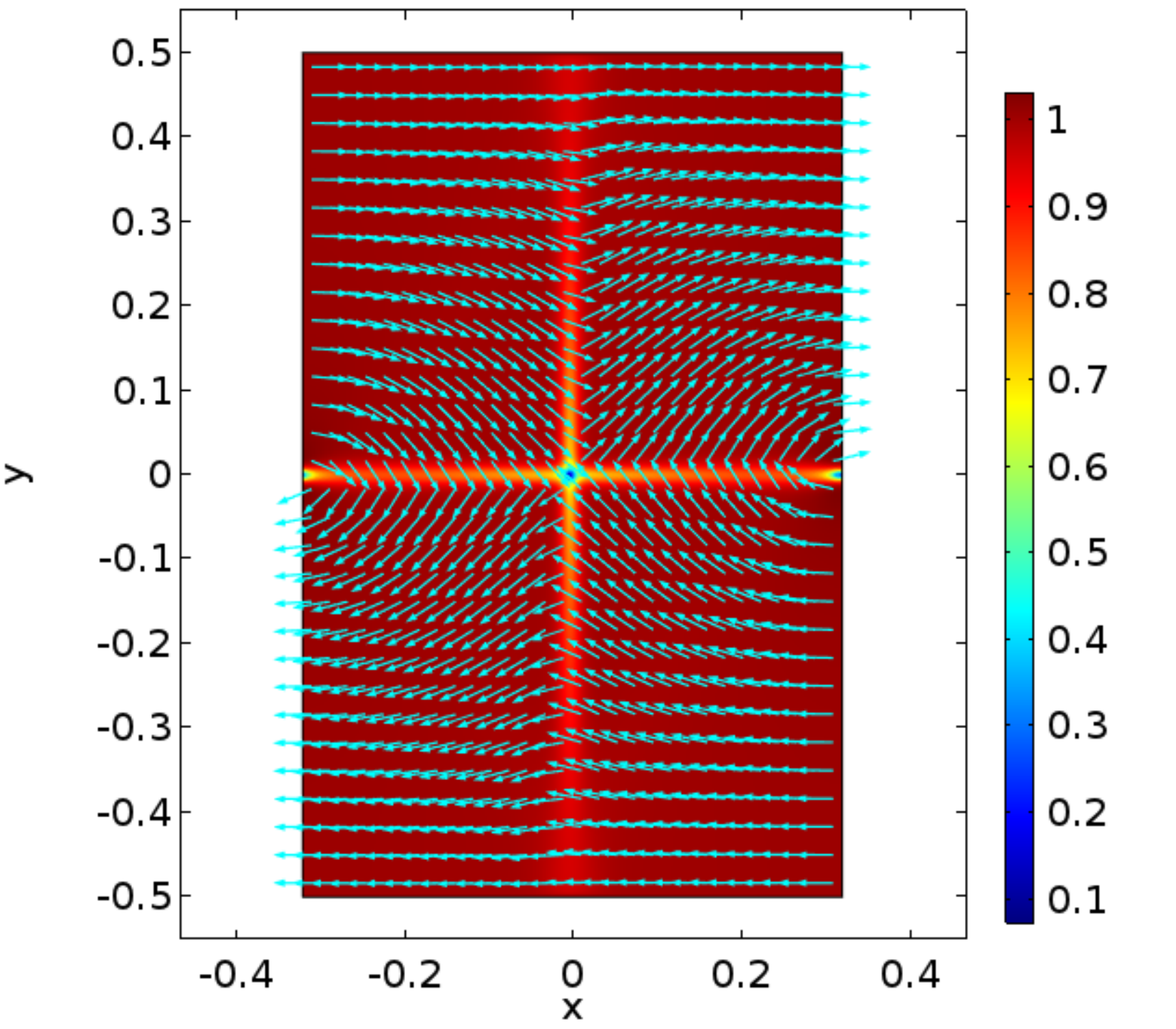}
    \caption{A solution $u$ of the Euler-Lagrange equation associated with the energy functional \eqref{energy} in the rectangle $[-T,T]\times[-1/2,1/2]$ subject to periodic boundary conditions on $\left\{-T,T\right\}\times[-1/2,1/2]$ and assuming that $u(\cdot,\pm 1/2)=(\pm 1,0)$. Here $L=1/2$ and $T=\tilde T(1)/2 \approx 0.3$. Both $u$ and $|u|$ are shown.}
  \label{f:ct-1}
\end{figure}
\begin{figure}[H]
\centering
    \includegraphics[width=3in]
                    {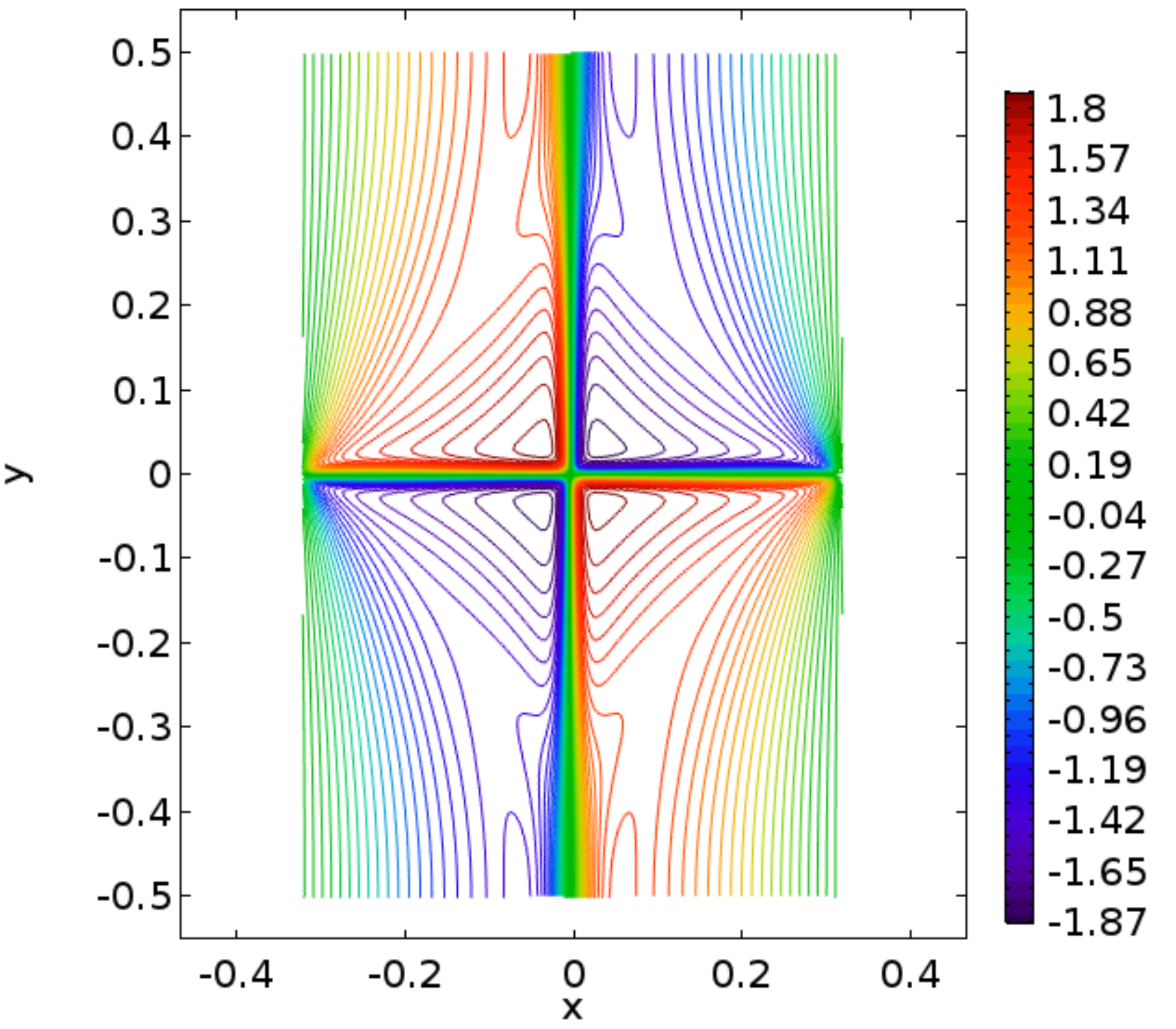}
    \caption{Level curves for the divergence of $u$, where $u$ is depicted in Fig.~\ref{f:ct-1}.}
  \label{f:ct-2}
\end{figure}
\begin{figure}[H]
\centering
    \includegraphics[width=3in]
                    {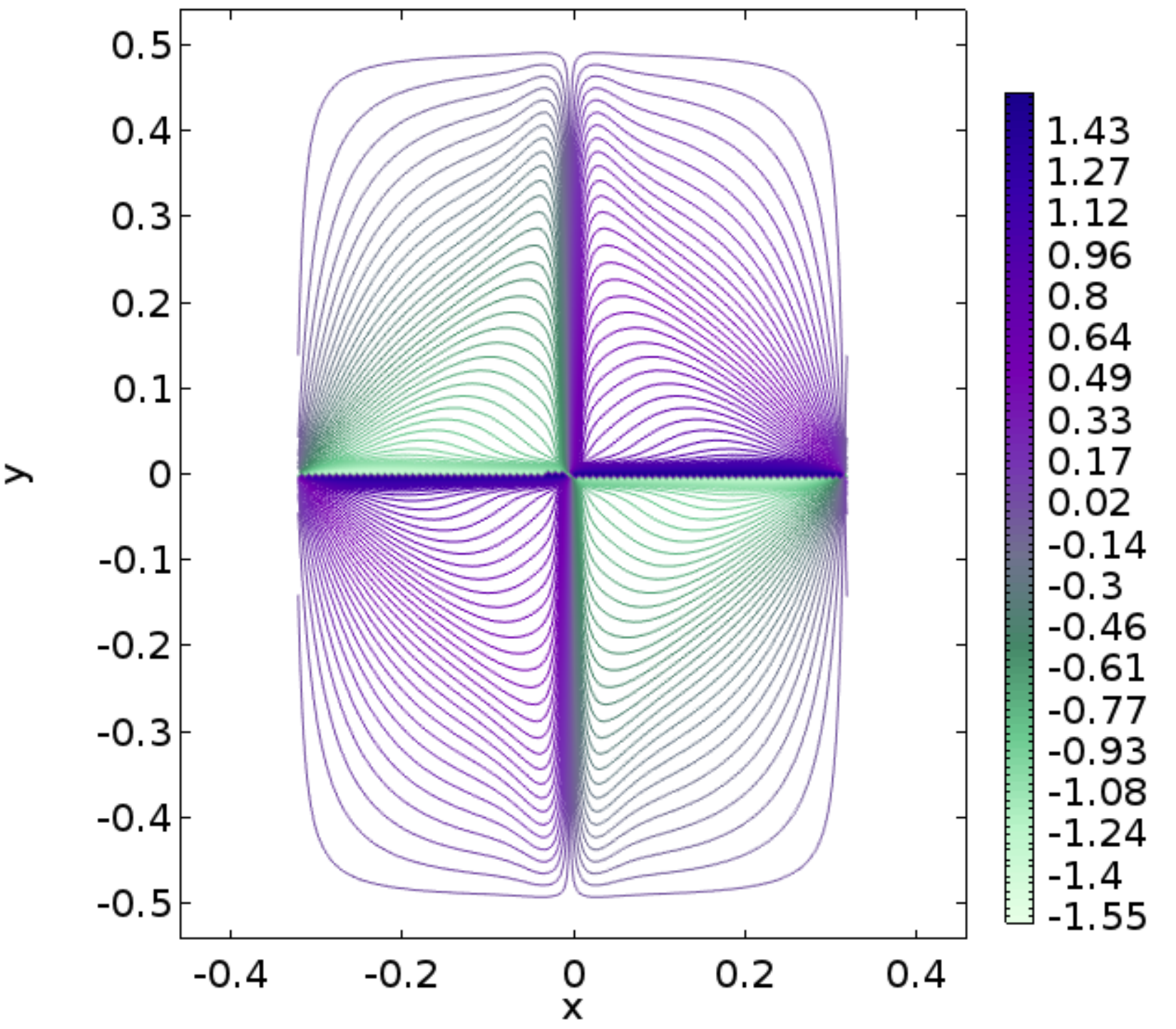}
    \caption{Level curves for the angle $\theta$, where $u=(\cos{\theta}, \sin{\theta})$ is depicted in Fig.~\ref{f:ct-1}.}
  \label{f:ct-3}
\end{figure}
Before proceeding with the analytical construction of a cross-tie configuration resembling Fig. \ref{f:ct-1}, we continue with further remarks about our $E_\e$ numerics for larger values of $L.$ When $L$ is increased further, it appears that $2T=\tilde T(2L)$ as determined by \eqref{eq:TDLN} is no longer the period of the optimal construction as two cross-tie structures appear on the interval $[-T,T]$ in Figs.~\ref{f:ctt-1}-\ref{f:ctt-3}. We call this a type II cross-tie configuration. A close examination of Fig.~\ref{f:ctt-2} shows that the level curves for divergence that originate on the $y$-axis appear to terminate on the line $y=1/2$, as opposed to those in Figs.~\ref{f:ct-1}-\ref{f:ct-3}. Pursuing an analytical construction of this solution is beyond the scope of the present paper. However,  it follows that we can identify at least three families of critical points that may minimize the limiting energy functional $E_0$ for different values of $L$. 
\begin{figure}[H]
\centering
    \includegraphics[width=3in]
                    {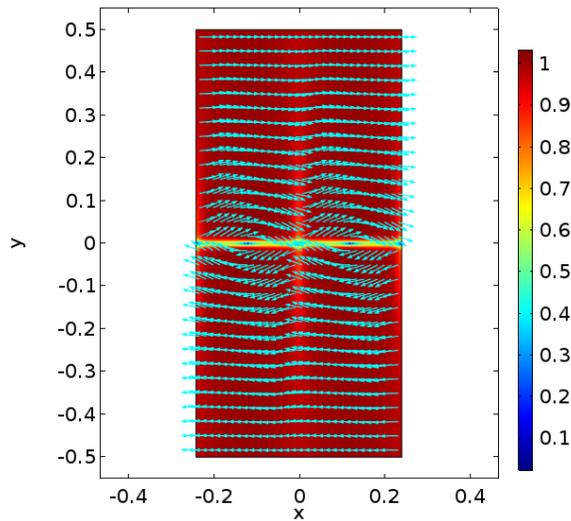}
    \caption{A solution $u$ of the Euler-Lagrange equation associated with the energy functional \eqref{energy} in the rectangle $[-T,T]\times[-1/2,1/2]$ subject to periodic boundary conditions on $\left\{-T,T\right\}\times[-1/2,1/2]$ and assuming that $u(\cdot,\pm 1/2)=(\pm 1,0)$. Here $L=3/2$ and $T=\tilde T(3)/2 \approx 0.25$. Both $u$ and $|u|$ are shown.}
  \label{f:ctt-1}
\end{figure}
\begin{figure}[H]
\centering
    \includegraphics[width=3in]
                    {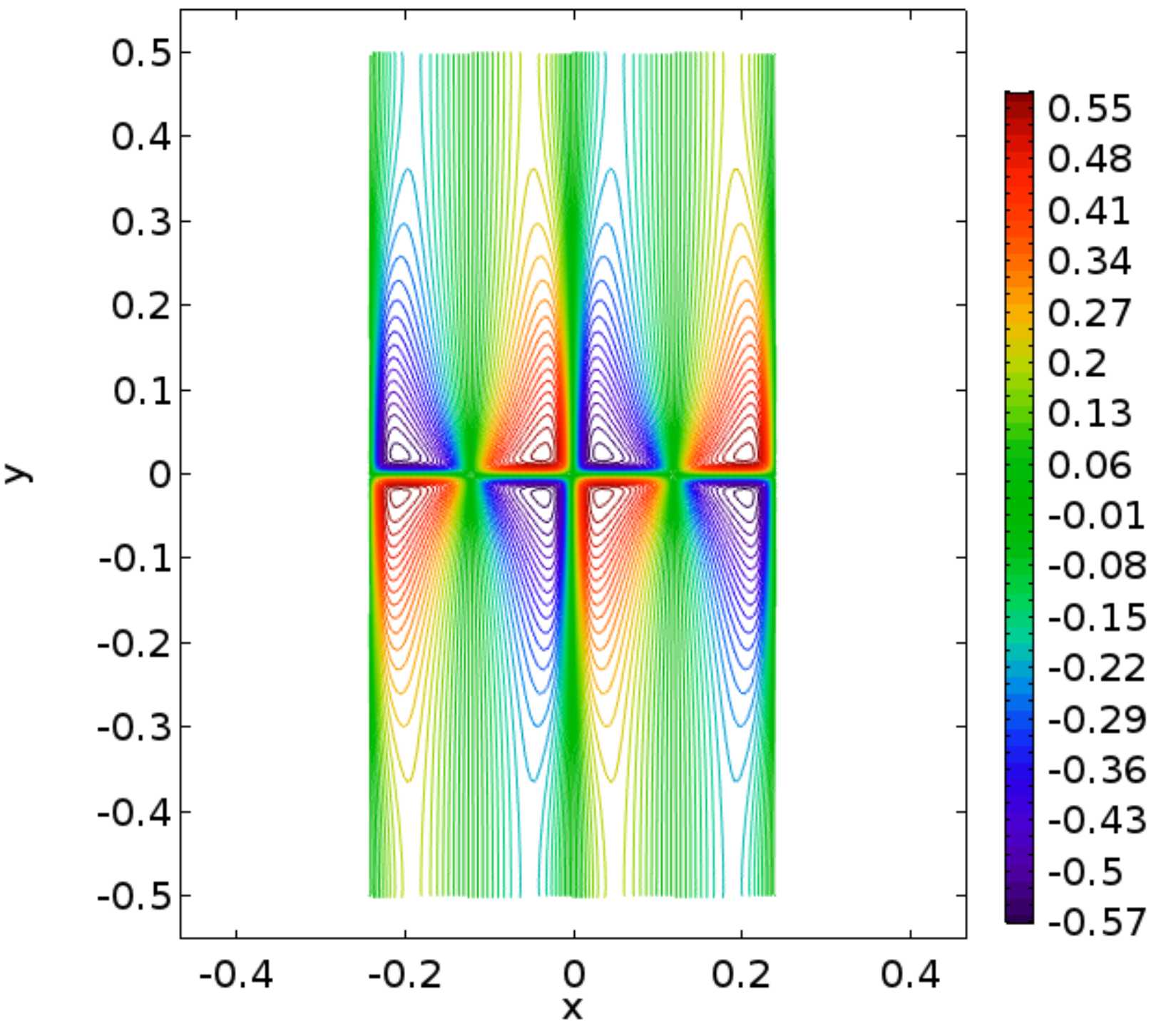}
    \caption{Level curves for the divergence of $u$, where $u$ is a type II cross-tie depicted in Fig.~\ref{f:ctt-1}.}
  \label{f:ctt-2}
\end{figure}
\begin{figure}[H]
\centering
    \includegraphics[width=3in]
                    {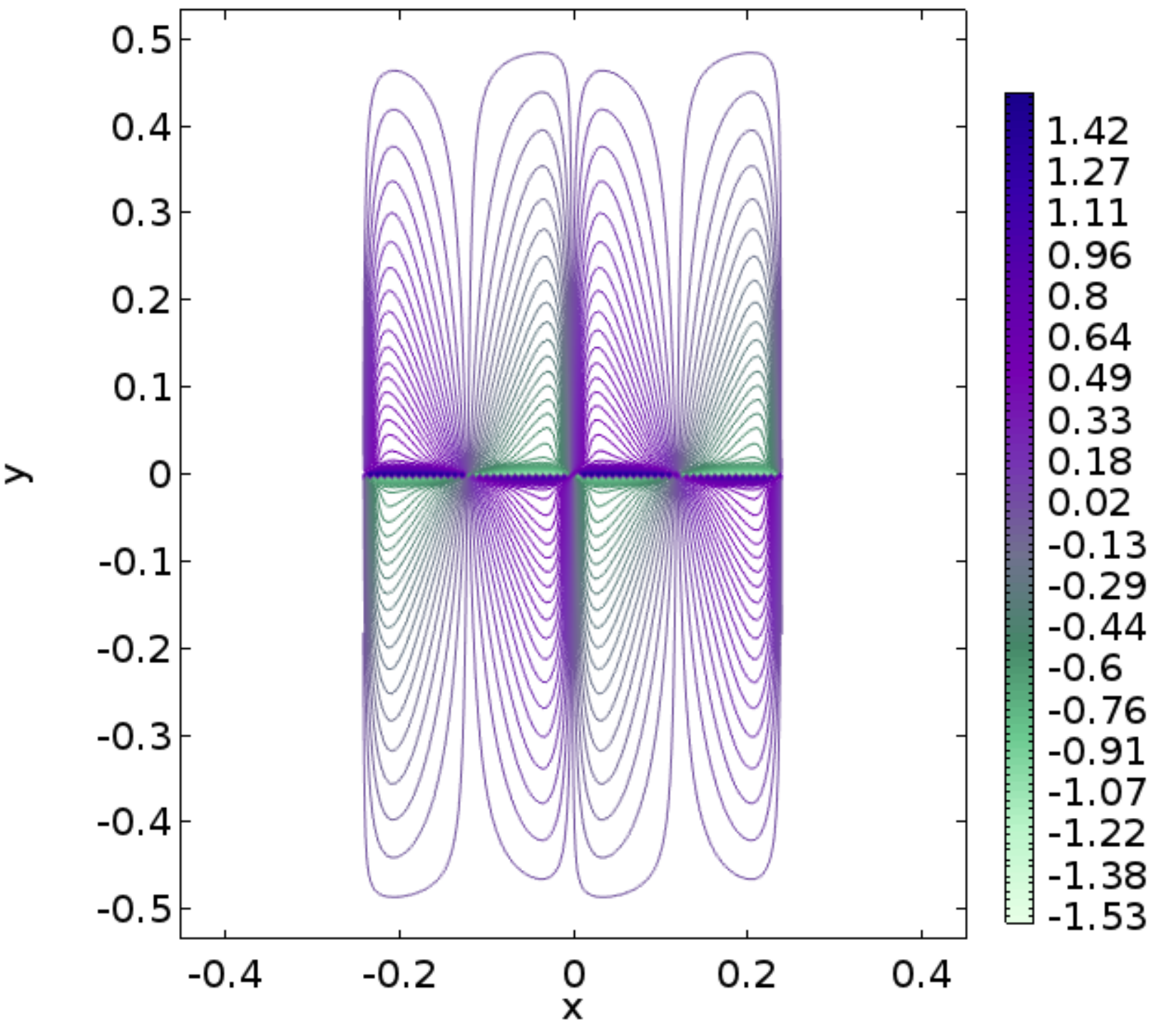}
    \caption{Level curves for the angle $\theta$, where $u=(\cos{\theta}, \sin{\theta})$ is depicted in Fig.~\ref{f:ctt-1}.}
  \label{f:ctt-3}
\end{figure}

\subsection*{Analytical construction of a cross-tie configuration and the proof of Theorem \ref{not1d}}
We now use the observations made concerning the numerics of a single cross-tie to construct a critical point of $E_0.$ Although numerics were carried out fixing $H = 1/2,$ we will carry out our construction for any $H,$ and work on a single period cell $\Omega = (0,2T) \times (-H,H).$ We will further assume $T < H,$ a choice that is consistent with the choice made in the numerics. 

A single period cell of this solution is composed of a dipole, i.e. a pair of $+1$ and $-1$ vortices along with walls connecting them. The observations (i)-(iii) above from the numerics motivates us to construct the critical point $u = (\cos \theta, \sin \theta)$ on a quarter of the period cell, say $\omega := (0,T) \times (0,H)$ and define $u$ elsewhere by appropriate reflections.  
\begin{figure}[H]
\centering
    \includegraphics[width=1.5in]
                    {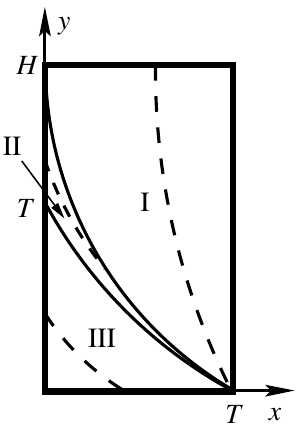}
    \caption{Regions corresponding to different characteristics families. Typical characteristics for each region are indicated by dashed lines.}
  \label{fig:BLAH}
\end{figure}
A quarter period cell is displayed in Figure \ref{fig:BLAH}. By comparison with Fig. \ref{f:ct-1}, the line $x = 0$, $0 \leqslant y \leqslant H$ denotes a vertical wall and the $x-$axis denotes a horizontal wall. Upon reflection and periodic extension, the point $(0,0)$ is to house a degree $-1$ vortex while at the point $(T,0)$ we will have constructed a $+1$ vortex resembling the $\widehat{e}_\theta$ vector. 

We construct solutions to the system of conservation laws given in Corollary \ref{conservation} using the method of characteristics. 
Within the quarter period cell $\omega$ under consideration, we seek $ u = (\cos \theta, \sin \theta)$ with $\theta \in [0, \frac{\pi}{2}].$ We impose Dirichlet boundary conditions $\theta = 0$ along the top and right boundaries of $\omega.$ The condition on the right boundary is a result of the symmetry observation (ii) above. The natural boundary condition \eqref{el:4} is to be satisfied along the left boundary and the $x-$axis since these represent walls. 

Building on observation (iii), the characteristics solution in $\omega$ consists of three families of circular arcs, labeled regions $I$ through $III$ in Fig.~\ref{fig:BLAH} and described in Steps 1-3 below. In each of the regions $I,II$ and $III$ above, we will denote the arc-length and characteristic variables by $s_1,s_2, s_3$  and $t_1,t_2,t_3$ respectively. The dependent variables, $x = x(s,t), y = y(s,t), \theta = \theta(s,t) $ and $v = v(s,t)$ will also be denoted using appropriate subscripts. 

\noindent {\bf 1.} In this step, we construct characteristics foliating Region I in Fig.~\ref{fig:BLAH}. First, starting from the top boundary $\{(s_1,H): 0 \leqslant s_1 \leqslant T\},$ we issue characteristics that meet at the point $(T,0).$ Indeed, along the top boundary, we have the boundary condition $\theta_1 = 0,$ since $a = 0.$  By the characteristic equations, characteristics emanating from $(s_1,H)$ for any $0 \leqslant s_1 \leqslant T$ leave the top boundary orthogonally. For such fixed $s_1,$ there is a unique circle orthogonal to the top boundary at $(s_1,H)$ that passes through the point $(T,0).$ A geometric argument shows that the center of this circle is given by $\left( \frac{T + s_1}{2} + \frac{H^2}{2(T-s_1)} , H\right),$ while the radius is given by 
\begin{align} \label{eq:radius}
R(s_1) = \frac{T-s_1}{2} + \frac{H^2}{2(T-s_1)}. 
\end{align}
It follows that $R(s_1) \geqslant H.$ Integrating the characteristics starting at the top boundary, the circles constructed are characteristics, and along the circle starting at $(s_1,H),$ we have $v_1(s_1,t_1) \equiv v_1(s_1) := \frac{1}{R(s_1)},$ and $\theta_1(s_1,t_1) = v_1(s_1)t_1. $ 

It is clear that the foregoing yields characteristics that only meet at $(T,0)$ and not before. Furthermore, the right boundary of $\omega,$ along which $\theta_1 = 0,$ is itself a characteristic, and belongs to the above family corresponding to infinite radius, as can be observed by setting $s_1 = T$ in \eqref{eq:radius}. Furthermore, it is clear that the divergence is bounded for this family, i.e. $v_1(s_1) \in [0,H].$ 

The characteristic emanating out of $(0,H)$ satisfies $v_1(0) = \frac{1}{R(0)} = \frac{2T}{T^2 + H^2}=:\alpha.$ For later use, we record the equation of this characteristic as being given by 
\begin{align}
x_1(t_1) = - \frac{1}{\alpha} \left( \cos\big( \alpha t_1\big)  - 1\right), \hspace{1cm} 
y_1(t_1) = H-\frac{1}{\alpha} \sin \big(\alpha t_1\big). 
\end{align}
We will refer to this characteristic as the terminal characteristic of the first family and denote it by $\Gamma.$
If we let $t_1^*$ denote the time of arrival of this characteristic at $(T,0),$ then we have the relation 
\begin{align}
\label{eq:relation}
\frac{H}{\sin \big( \alpha t_1^* \big)} = \frac{T}{1 - \cos \big(\alpha t_1^*\big)},
\end{align}
that we can also write as
\begin{equation}
\label{eq:ldog}
\tan{\left(\frac{\alpha t_1^\ast}{2}\right)}=\frac{T}{H}.
\end{equation}

\noindent {\bf 2.} In this step, we construct a family of characteristics that foliate region III of Fig~\ref{fig:BLAH}. This family of characteristics consists of circular arcs emanating off of $(s_3,0)$ and terminating on the vertical wall at $(0,y_3(t_3^*(s_3))),$ for $s_3 \in (0,T).$  The symmetry assumptions from observations (ii)-(iii) along with \eqref{el:4} yield $Lv_3 + \sin 2 \theta_3 = 0$ along both the left and bottom walls. Since the divergence $v_3$ is constant along characteristics, we find that $\sin 2 \theta_3(s_3,0) = \sin 2 \theta_3(s_3,t_3^*(s_3)),$ yielding 
\begin{align}
\theta_3(s_3,t_3^*(s_3)) = \frac{\pi}{2} - \theta_3(s_3,0). \label{eq:tint}
\end{align}
Writing down the condition that $(x_3(s_3,t_3^*(s_3)), y_3(s_3, t_3^*(s_3))$ lies on the left wall, i.e. $x_3(s_3,t_3^*(s_3)) = 0,$ along with \eqref{el:4} along this wall, yields upon some elementary computations that
\begin{align} \label{eq:thetafrombelow}
\sin 2 \theta_3(s_3,0) = \frac{-1 + (1 + 2 \lambda)^{1/2}}{\lambda}, \hspace{1cm} \lambda = \frac{2s_3^2 }{L^2}. 
\end{align}
It can be checked that the right-hand side of the last equation defining $\sin 2 \theta_3(s_3,0)$ indeed belongs to the interval $(0,1).$ Integrating the characteristic equations, we find that the circular arcs of the family foliating the region $III$ are characteristics along which we have $v_3(s_3,t_3) := v_3(s_3) = - \frac{\sin 2 \theta_3(s_3,0)}{L}. $ It can also be easily checked that the particular characteristic of this family originating at $(T,0)$ satisfies 
\begin{align}
y_3(T,t_3^*(T)) = T.
\label{termarrival}
\end{align}
 We will refer to this characteristic as the terminal characteristic of the family foliating the region $III$. 

For reasons that will be clear in the next step, we require that the terminal characteristic of the families foliating the regions $I$ and  $III$, respectively, are tangent at $(T,0).$ This condition can be rewritten, using equation \eqref{eq:relation} and \eqref{eq:thetafrombelow} as 
\begin{align} \label{eq:TDL}
\frac{L^2}{T^2} \left( \sqrt{1 + 4 \frac{T^2}{L^2}} - 1\right) =  \frac{8TH}{T^2 + H^2} \frac{H^2 - T^2}{H^2 + T^2}. 
\end{align} 
Before continuing, we remark about the relation \eqref{eq:TDL}. The left-hand side is a function of $L/T$ while the right-hand side is a function of $H/T$  alone, which we are assuming to be greater than one. We claim that for any $x := H/T > 1,$ there exists a unique $L/T$ such that \eqref{eq:TDL} holds. Indeed, setting $\zeta = \frac{2x}{x^2 + 1 } \frac{x^2 - 1}{x^2 + 1} < 1,$ and $\Lambda = \frac{4T^2}{L^2},$ we are required to solve 
\begin{align*}
\sqrt{1 + \Lambda} = 1 + \zeta \Lambda. 
\end{align*}
We obtain that $\Lambda = \frac{1 - 2 \zeta}{ \zeta^2},$ which is positive provided $\zeta < 1/2,$ or equivalently, provided $2x (x^2 - 1) < \frac{1}{2}(x^2+1)^2.$ This is clear since 
\begin{align*}
\frac{1}{2} (x^2 +1)^2 - 2x (x^2 - 1) = \left( \frac{1}{\sqrt{2}} (x^2 - 1) - \sqrt{2}x \right)^2. 
\end{align*}

Introducing the rescaling $\tilde T=T/H$, we denote by $\tilde T(L/H)$ the unique solution of
\begin{align} \label{eq:TDLN}
L/H\left( \sqrt{(L/H)^2 + 4 \tilde T^2} - L/H\right) - \frac{8\tilde T^3\left(1 - \tilde T^2\right)}{{\left(\tilde T^2 + 1\right)}^2}=0,
\end{align} 
for a given value of $L/H$. In what follows, we set $T=H\tilde T(L/H)$.

We conclude this part with the following observation. The equation \eqref{eq:ldog} can now be written as
\[\tilde T=\tan{\left(\frac{\alpha t_1^\ast}{2}\right)}\]
and testing \eqref{eq:TDLN} with $\alpha t_1^\ast=\frac{\pi}{4}$ and $\alpha t_1^\ast=\frac\pi2,$ we observe that the left hand side is negative and positive, respectively. By the intermediate value theorem, it then follows that $\alpha t_1^\ast\in\left[\frac\pi4,\frac\pi2\right]$ for all $L>0$. With the help of \eqref{eq:tint} we can now conclude that 
\begin{equation}
\label{eq:smth}
\theta_3(T,t_3^\ast(T))\in\left[0,\frac\pi4\right].
\end{equation}

\medskip
\noindent {\bf 3.} We finally foliate region II by characteristics to define our critical point in this region. Since $H > T$ by assumption, it remains to fill the gap between the circles of the first two families. Briefly: we issue secondary characteristics that emanate from the $s = 0$ characteristic $\Gamma$ of the first family tangentially, to meet the left wall. The divergence $v$ has a jump discontinuity along $\Gamma,$ while the tangential departure of the secondary characteristics from $\Gamma$ renders $\theta$ continuous across $\Gamma.$ 

In more detail, we write the initial curve $\Gamma$ using $s$ as the arclength parameter (cf. Step 1) to get 
\begin{align*}
x_0(s_2) = \frac{1}{\alpha} \big( 1 - \cos (\alpha s_2) \big), \hspace{1cm} y_0(s_2) = H - \frac{1}{\alpha} \sin (\alpha s_2),
\end{align*}
and the initial condition on $\theta$ is given by $\theta_0(s_2) = \alpha s_,$ where $s_2\in[0,t_1^\ast]$ and $t_1^\ast$ is as in \eqref{eq:relation}. We do not set an initial condition on the divergence $v_2,$ but instead determine $v_2$ by enforcing \eqref{el:4} at the left wall. Integrating the characteristic equations, we find 
\begin{gather} 
\label{vfol} v_2(s_2,t_2) = v_2(s_2),\quad
\theta_2(s_2,t_2) = \theta_0(s_2) + v_2(s_2)t_2,\\ \label{xfol}
x_2(s_2,t_2) = \frac{1}{\alpha} \big( 1 - \cos(\alpha s_2) \big) + \frac{1}{v_2(s_2)} \left( \cos \theta_2(s_2,t_2) - \cos \alpha s_2)\right), \\  \nonumber y_2(s_2,t_2) = H - \frac{1}{\alpha} \sin (\alpha s_2) + \frac{1}{v_2(s_2)} \left( \sin \theta_2(s_2,t_2) - \sin \alpha s_2\right).
\end{gather}
Again, defining $t_2^\ast(s_2)$ to be the time of arrival of the characteristic originating at $(x_0(s_2),y_0(s_2))$ to the $y-$axis, we find using \eqref{vfol},\eqref{xfol} and \eqref{el:4}, and denoting $\theta_2^\ast(s_2) := \theta_2(s_2,t_2^\ast(s_2)),$  
\begin{gather} \nonumber
 \frac{1}{\alpha} \big( 1 - \cos(\alpha s_2) \big) + \frac{1}{v_2(s_2)} \left( \cos \theta_2^\ast(s_2) - \cos \alpha s_2\right) = 0 \\ \label{lastthing}
L v_2(s_2) + \sin 2 \theta_2^\ast(s_2) = 0.
\end{gather}
If we define a function $f=f(\beta,s_2)$ via the formula
\begin{gather} \label{f}
f(\beta,s_2) := (1 - \cos (\alpha s_2)) \sin 2\beta- L \alpha (\cos \beta- \cos (\alpha s_2)),
\end{gather}
then substituting the second of the equations in \eqref{lastthing} into the first, we find that $\theta_2^\ast$ must satisfy the condition $f(\theta_2^\ast(s_2),s_2)=0.$
We note that $f(0,s_2)<0$ for any $s_2>0$. Now with an eye towards applying the intermediate value theorem, we define $\beta^\ast=\beta^\ast(s_2)$ via
\[
\sin\beta^\ast=\left\{\begin{matrix} \frac{L\alpha}{2\big(1-\cos(\alpha s_2)\big)}&\;\mbox{if}\;L\alpha\leq 2\big(1-\cos(\alpha s_2)\big)\\
1&\;\mbox{otherwise}
\end{matrix}\right.
\]
One easily checks that $f(\beta^\ast(s_2),s_2)>0$ for $s_2>0.$ Hence, the desired terminal angle $\theta_2^\ast(s_2)$ exists for all $s_2$. 

Furthermore, differentiating \eqref{f}, setting $\beta = \theta_2^\ast(s_2)$, with respect to $s_2,$ we find that 
\begin{align*}
\frac{d \theta_2^\ast}{ds_2} = \frac{( L\alpha-\sin 2 \theta_2^\ast) \alpha \sin (\alpha s_2)}{2 (1 - \cos (\alpha s_2)) \cos 2 \theta_2^\ast + L \alpha \sin \theta_2^\ast}.
\end{align*}
Our goal is to show that $\frac{d \theta_2^\ast}{ds_2} > 0.$ We start first, by showing that the denominator of the fraction defining this derivative is positive. 
For each fixed $s_2,$ the function $D$ prescribed by 
\begin{align*}
D(\sin \beta,s_2) := 2( 1 - \cos (\alpha s_2))(1 - 2 \sin^2 \beta) + L \alpha \sin \beta
\end{align*}
defines a downward facing quadratic in $\sin \beta.$ We note that $D(0,s_2) = 2(1 - \cos(\alpha s_2)) > 0,$ and an easy calculation shows that $D(\sin \beta^\ast,s_2) > 0.$ It follows easily that $D(\sin \theta^\ast(s_2),s_2) > 0,$ which is precisely the denominator of the fraction defining  $\frac{d \theta_2^\ast}{ds_2}.$ 

We must show that the numerator of this fraction is also positive. This is immediate when $L \alpha \geqslant 1,$ and therefore we must provide an argument for when $L\alpha < 1.$ Define the number $\beta_- \in [0,\pi/4]$ using the formula $\sin 2 \beta_- = L \alpha.$ Then, we note that $f(\beta_-,s_2) > 0.$ Therefore, when $L \alpha \leqslant 1,$ we have that in fact $\theta_2^\ast(s_2) < \min ( \beta_-, \beta^\ast) \leqslant \pi/4.$ Consequently, for such $L\alpha$ values, we have that $L \alpha - \sin 2 \theta_2^\ast > L \alpha - \sin 2 \beta_- = 0.$ 

This completes the proof of the claim that $\theta^\ast$ is increasing as a function of $s.$ Combining this fact with the constraint \eqref{eq:smth}, we have
\[\theta_2^\ast(T)=\theta_3(T,t_3^\ast(T))\in\left[0,\frac\pi4\right],\] and hence 
\[\theta_2^\ast(s_2)\in\left[0,\frac\pi4\right]\mbox{ for all }s_2\in[0,T].\] The equation \eqref{lastthing} can now be used to show that that $v_2$ is both negative and decreasing. The proof that the characteristics foliate region II then proceeds exactly as in Lemma \ref{l:inc}, completing the construction of our cross-tie critical point.

Having completed the construction of the critical point $u$ of $E_0$ on all of $\Omega$ by appropriate reflections, towards proving Theorem \ref{not1d}, it remains to compute $E_0(u)$ and compare it with that of the one dimensional minimizer from Theorem \ref{1dmain}. The energies per period for the different competitors are compared in Fig.~\ref{f:cten}. Recall that, by Remark \ref{scaleinvariance}, the energy density per period is a function of the scaled elastic constant $L/H$. The solid and dashed lines in Fig.~\ref{f:cten} represent the energies of one-dimensional and the two-dimensional characteristics cross-tie constructions, respectively. Here the energy of a one-dimensional competitor is given in the statement of Theorem \ref{1dmain} and the energy of the two-dimensional construction is obtained by computing an appropriate Jacobian and numerically integrating in MATLAB \cite{MATLAB} (or by using the co-area formula). Comparing these energies for $L/H\in(L_0,L_1)$, Theorem \ref{not1d} now follows. 

\begin{figure}[H]
\centering
    \includegraphics[width=2.5in]
                    {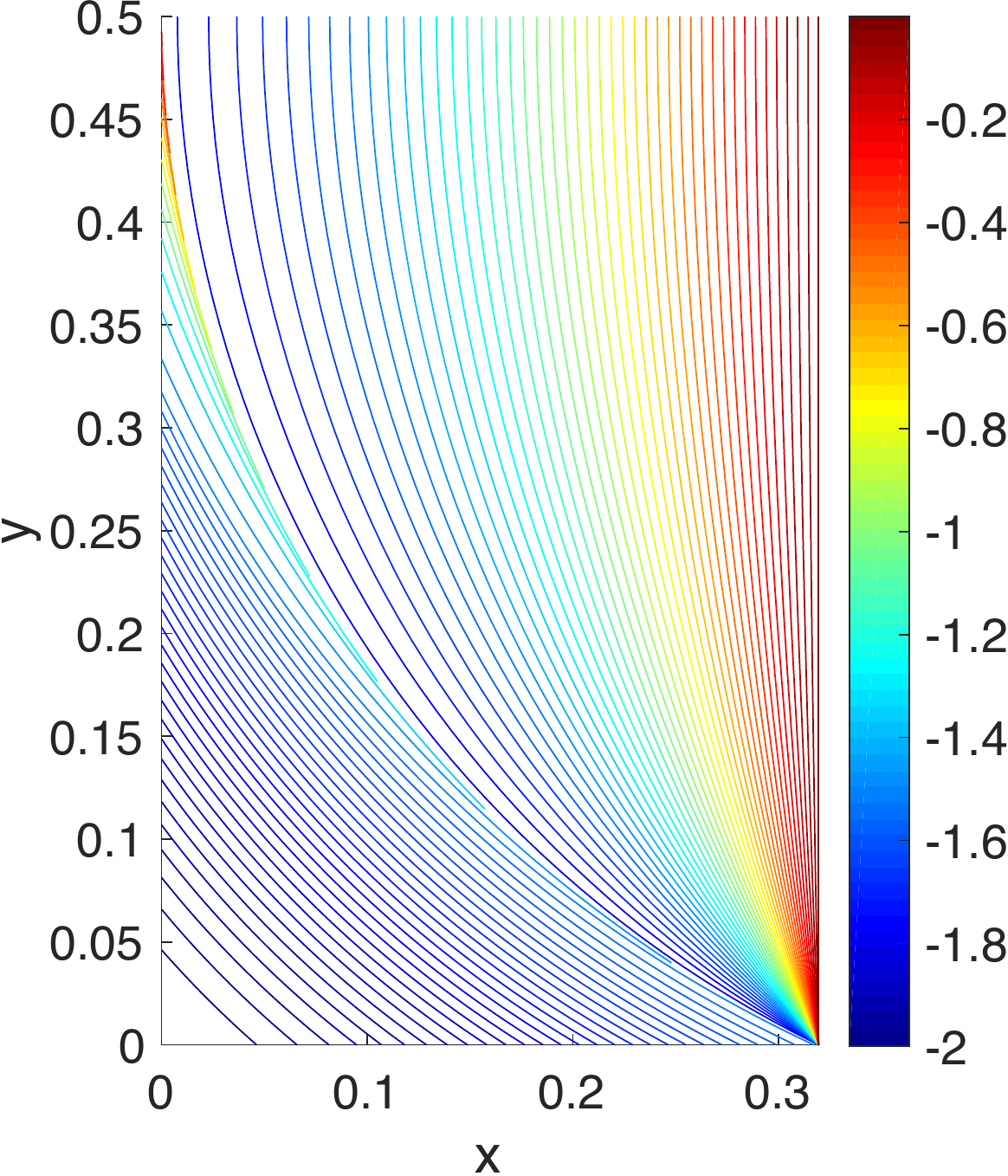}
    \caption{Level curves for the divergence of $u$, where $u$ is a solution obtained using characteristics. Here $L=1$.}
  \label{f:ctm-1}
\end{figure}
\begin{figure}[H]
\centering
    \includegraphics[width=2.5in]
                    {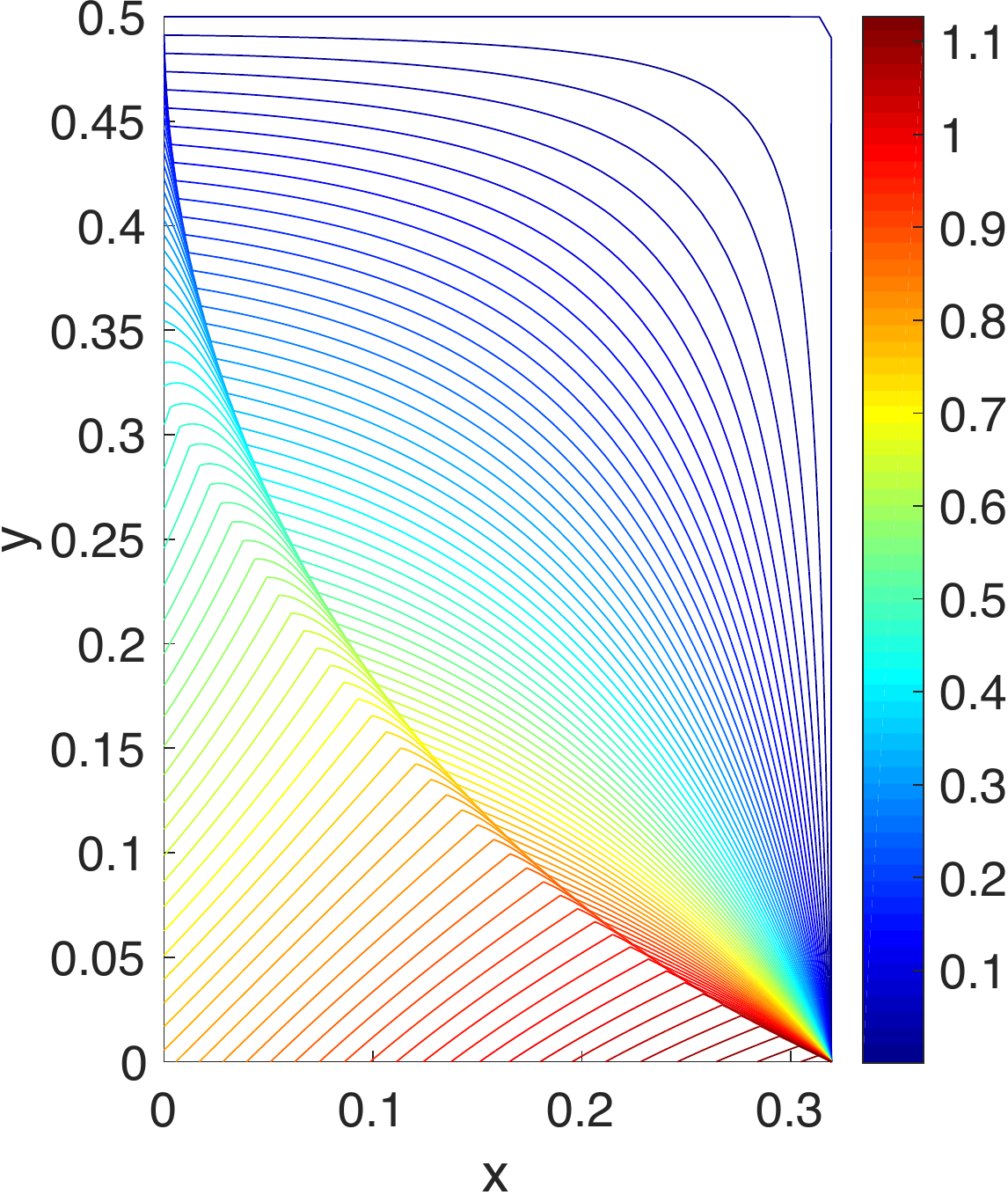}
    \caption{Level curves for the angle $\theta$, where $u=(\cos{\theta}, \sin{\theta})$ is a solution obtained using characteristics. Here $L=1$.}
  \label{f:ctm-2}
\end{figure}
\begin{figure}[H]
\centering
    \includegraphics[width=3.5in]
                    {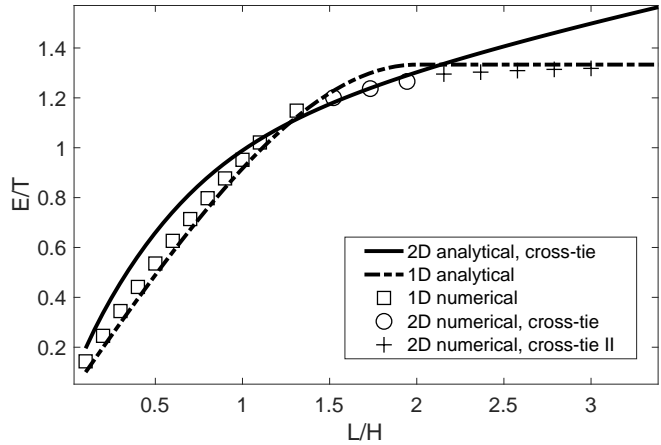}
    \caption{Energy per unit length.}
  \label{f:cten}
\end{figure}

Further numerical observations are in order. When the solution resulting from the characteristics construction is plotted (Figs.~\ref{f:ctm-1}-\ref{f:ctm-2}), it closely resembles those in Figs.~\ref{f:ct-2}-\ref{f:ct-3}. The markers in Fig.~\ref{f:cten} represent the energies of the numerically computed solutions to the Euler-Lagrange equations for $E_\varepsilon$, where the shape of the marker distinguishes the type of the energy-minimizing solution obtained in the simulations. We can observe a close correspondence between the numerics and analytical solutions as the squares and circles track well the one- and two-dimensional constructions, respectively. While the two-dimensional cross-tie construction discussed above has a smaller energy (both theoretically and numerically) on a short interval of $L$ values, it is then superseded by the two-dimensional cross-tie type II configurations of Figs.~\ref{f:ctt-1}-\ref{f:ctt-3}. Indeed, this configuration still has a smaller energy than the one-dimensional construction. The difference between the energies of the one-dimensional and the two-dimensional cross-tie type II constructions is small, however, and appears to decrease with an increasing $L$. 

We conclude with a few conjectures suggested by numerics. 

\medskip
\noindent\textbf{Conjecture 1:} For $0 < L/H < L_0,$ the one-dimensional minimizer from Theorem \ref{1dmain} is a unique minimizer of $E_0$ among all two-dimensional competitors.

\medskip
\noindent\textbf{Conjecture 2:} For $L/H \in (L_0,L_1),$ the critical point constructed in the proof of Theorem \ref{not1d} is a minimizer of $E_0.$ 

\medskip
\noindent\textbf{Conjecture 3:} For $L/H \geqslant L_1,$ there exist a two-dimensional minimizer $u_L$ with $E_0[u_L]$ that is lower than the minimum energy achieved over one-dimensional competitors. The difference in energies, however, vanishes in the $L \to \infty$ limit. Moreover, the unique cluster point of $u_L$ in $H_{\mathrm{div}}(\Omega;\mathbb{S}^1) \cap BV(\Omega;\mathbb{S}^1)$ is given by the piecewise constant vector field which equals $(1,0)$ for $y > 0$ and equals $(-1,0)$ for $y < 0.$ 

\section{Acknowledgements}
PS and RV acknowledge the support from NSF DMS-1101290 and NSF DMS-1362879. RV also acknowledges the support from an Indiana University College of Arts and Sciences Dissertation Year Fellowship. DG acknowledges the support from NSF DMS-1729538.

 
\bibliographystyle{acm} 

\bibliography{GSV} 

\end{document}